\title{On the Loewner framework, the Kolmogorov superposition theorem, and the \\ curse of dimensionality}
\author[$\ast$]{A. C. Antoulas}
\affil[$\ast$]{Department of Electrical and Computer Engineering, Rice University, Houston, TX, USA.\authorcr
	\email{aca@rice.edu}}
\author[$\dag$]{I. V. Gosea}
\affil[$\dag$]{Max Planck Institute for Dynamics of Complex Technical Systems, CSC Group \\

	Sandtorstr. 1, 39106 Magdeburg, Germany.\authorcr
	\email{gosea@mpi-magdeburg.mpg.de}, \orcid{0000-0003-3580-4116}}
\author[$\S$]{C. Poussot-Vassal}
\affil[$\S$]{DTIS, ONERA, Universit\'e de Toulouse, 31000, Toulouse, France.\authorcr
	\email{charles.poussot-vassal@onera.fr}, \orcid{0000-0001-9106-1893}}
\abstract{The Loewner framework is an interpolatory approach for the approximation of linear and nonlinear systems. The purpose here is to extend this framework to linear parametric systems with an arbitrary number $\ord$ of parameters. To achieve this, a new generalized multivariate rational function realization is proposed. Then, we introduce the $\ord$-dimensional multivariate Loewner matrices and show that they can be computed by solving a set of coupled Sylvester equations. The null space of these Loewner matrices allows the construction of multivariate rational functions in barycentric form. The principal result of this work is to show how the null space of $\ord$-dimensional Loewner matrices can be computed using a sequence of 1-dimensional Loewner matrices. Thus, a decoupling of the variables is achieved, which leads to a drastic reduction of the computational burden. 
Equally importantly, this burden is alleviated by avoiding the explicit construction of large-scale $\ord$-dimensional Loewner matrices of size $N\times N$. The proposed methodology achieves decoupling of variables, leading to (i) a complexity reduction from ${\cal O}(N^3)$ to below ${\cal O}(N^{1.5})$ when $\ord>5$ and (ii) to memory storage bounded by the largest variable dimension rather than 
their product, thus taming the curse of dimensionality and making the solution scalable to very large data sets. This decoupling of the variables leads to a result similar to the Kolmogorov superposition theorem for rational functions. Thus, making use of barycentric representations, every multivariate rational function can be computed using the composition and superposition of single-variable functions. Finally,  we suggest two algorithms (one direct and one iterative) to construct, directly from data, multivariate (or parametric) realizations ensuring (approximate) interpolation.  Numerical examples highlight the effectiveness and scalability of the method.}
\keywords{parameterized linear systems, Loewner matrix, multivariate functions, barycentric rational interpolation, frequency response, interpolation methods, multivariate barycentric form, Lagrange polynomial basis, multivariate Loewner matrix, Sylvester equations, curse of dimensionality, decoupling of variables, Kolmogorov superposition theorem.}
\definecolor{blue}{RGB}{16,97,169} % bleu onera
\definecolor{grey}{RGB}{88, 102, 110} % gris onera 
\definecolor{green}{RGB}{65,209,204} % turquoise
\definecolor{orange}{RGB}{224, 131, 0} % corail
\definecolor{pink}{RGB}{255,105,180} % rose
\def\IR{{\mathbb R}}
\def\IC{{\mathbb C}}
\def\IL{{\mathbb L}}
\def\IV{{\mathbb V}}
\def\IW{{\mathbb W}}
\def\IX{{\mathbb X}}
\def\IN{{\mathbb N}}
\newcommand{\frA}{{\mathcal A}}
\newcommand{\frB}{{\mathcal B}}
\newcommand{\frC}{{\mathcal C}}
\newcommand{\frE}{{\mathcal E}}
\newcommand{\frH}{{\mathcal H}}
\newcommand{\frU}{{\mathcal U}}
\newcommand{\frY}{{\mathcal Y}}
\newcommand{\frX}{{\mathcal X}}
\newcommand{\bC}{{\mathbf C}}
\newcommand{\bG}{{\mathbf B}}
\newcommand{\bS}{{\mathbf S}}
\newcommand{\bL}{{\mathbf L}}
\newcommand{\bM}{{\mathbf M}}
\newcommand{\bI}{{\mathbf I}}
\newcommand{\bH}{{\mathbf H}}
\newcommand{\bW}{{\mathbf W}}
\newcommand{\bR}{{\mathbf R}}
\newcommand{\bT}{{\mathbf T}}
\newcommand{\bX}{{\mathbf X}}
\newcommand{\bx}{{\mathbf x}}
\newcommand{\by}{{\mathbf y}}
\newcommand{\bu}{{\mathbf u}}
\newcommand{\bV}{{\mathbf V}}
\newcommand{\bc}{{\mathbf c}}
\newcommand{\bs}{{\mathbf s}}
\newcommand{\be}{{\mathbf e}}
\newcommand{\bn}{{\mathbf n}}
\newcommand{\bd}{{\mathbf d}}
\newcommand{\bg}{{\mathbf g}}
\newcommand{\bp}{{\mathbf p}}
\newcommand{\br}{{\mathbf r}}
\newcommand{\bq}{{\mathbf q}}
\newcommand{\bt}{{\mathbf t}}
\newcommand{\bv}{{\mathbf v}}
\newcommand{\bw}{{\mathbf w}}
\newcommand{\bpi}{{\mathbf \pi}}
\newcommand{\bfz}{{\mathbf 0}}
\newcommand{\cN}{ {\cal N} }
\newcommand{\cS}{ {\cal S} }
\newcommand{\bPhi}{ \boldsymbol{\Phi} }
\newcommand{\bDelta}{\boldsymbol{\Delta}}
\newcommand{\bLambda}{\boldsymbol{\Lambda}}
\newcommand{\bGamma}{\boldsymbol{\Gamma}}
\newcommand{\bone}{{\mathbf 1}}
\newcommand{\bzer}{{\mathbf 0}}
\newcommand{\rank}{{\mathbf{rank~}}}
\newcommand{\bspan}{{\mathbf{span~}}}
\newcommand{\bvec}{{\mathbf{vec~}}}
\newcommand{\vargen}[2]{\mathstrut^{#2} #1}
\newcommand{\var}[1]{\vargen{s}{#1}}
\newcommand{\IXlag}[1]{\vargen{\mathbb X^{\textrm{Lag}}}{#1}}
\newcommand{\IAlag}{{\mathbb A^{\textrm{Lag}}}}
\newcommand{\IBlag}{{\mathbb B^{\textrm{Lag}}}}
\newcommand{\xlag}[2]{\vargen{\bx_{#2}}{#1}}
\newcommand{\lan}[1]{\vargen{\lambda_{j_{#1}}}{#1}}
\newcommand{\mun}[1]{\vargen{\mu_{i_{#1}}}{#1}}
\newcommand{\lani}[2]{\vargen{\lambda_{{#2}}}{#1}}
\newcommand{\muni}[2]{\vargen{\mu_{{#2}}}{#1}}
\newcommand{\tableau}[1]{\textbf{tab}_{#1}}
\newcommand{\ord}[0]{{n}}
\newcommand{\flop}[0]{\texttt{flop}}
\newcommand{\pare}[1]{\left( #1\right)}
\DeclareMathOperator{\diag}{diag}
\providecommand{\col}[1]{{{#1}}} 
\providecommand{\row}[1]{{{#1}}} 
\newcommand{\bfe}{{\mathbf e}}
\def\IL{\mathbb{L}}
\def\bfz{{\mathbf 0}}
\newtheorem{theorem}{Theorem}[section]
\newtheorem{definition}{Definition}[section]
\newtheorem{corollary}{Corollary}[section]
\newtheorem{corr}{Corollary}[section]
\newtheorem{remark}{Remark}[section]
\newtheorem{result}{Result}[section]
\newtheorem{proposition}{Proposition}[section]
\newtheorem{example}{Example}[section]
\begin{document}

\maketitle

%%%%%%%%%%%%%%%%%%%%%%%%%%%%%%%%%
\section{Introduction}
The context, motivation, and problem statement are first presented. Since it is the principal mathematical tool of the developed method, a brief historical review of Loewner matrix-driven methods is given. Then, the contributions and paper overview are listed.

\subsection{Motivation and context: non-intrusive data-driven model construction}

Rational model approximation addresses the problem of constructing a reduced-order model that captures accurately the behavior of a potentially large-scale model depending on several variables. In the context of dynamical systems governed by differential and algebraic equations, the multivariate nature comes mainly from the parametric dependence of the underlying system or model. These parameters account for physical characteristics such as mass, length, or material properties (in mechanical systems), flow velocity, temperature (in fluid cases), chemical properties (in biological systems), etc. In engineering applications, the parameters are embedded within the model as tuning variables for the output of interest. The challenges and motivation for dynamical multivariate/parametric reduced order model (pROM) construction stem from three inevitable facts about modern computing and engineers' concerns: 
\begin{itemize}
    \item[(i)] First, accurate modeling often leads to large-scale dynamical systems with complex dynamics, for which simulation times and data storage needs become prohibitive, or at least impractical for engineers and practitioners;
    \item[(ii)] Second, the explicit mathematical model describing the underlying phenomena may not be always accessible while input-output data may be measured either from a computer-based (black-box) simulator or directly from a physical experiment; as a consequence, the internal variables of the dynamical phenomena are usually too many to be stored or simply inaccessible; 
    \item[(iii)] Third, a potentially large number of parameters may be necessary to be used in the following steps of the process.
\end{itemize}

Often, complex and accurate parametric models are needed to perform simulations, forecasting, parametric uncertainty propagation, and optimization in a broad sense. The goals are to better understand and analyze the physics, to tune coefficients, to optimize the system, or to construct parameterized control laws. As these objectives often require a multi-query model-based optimization process, the complexity dictates the accuracy, scalability, and applicability of the approach, it is relevant to seek a pROM or multivariate surrogate with low complexity.

\subsection{Literature overview on reduced-order modeling (ROM)}

In the last decade, considerable effort has been dedicated to devising reliable and accurate model reduction (intrusive) and reduced-order modeling (non-intrusive) methods, synthesized in a multitude of approaches developed in the last years \cite{ACA05,morBauBF14,hesthaven2016certified,ABG2020,book2021vol1,book2021vol2}. For the class of parametric systems, the comprehensive review contribution in \cite{morBenGW15} provides an exhaustive account of projection-based methods, from the 2000s until the middle of the 2010s. 

Additionally, relatively new approaches use time-domain snapshot data to compute reduced-order models, such as operator inference (OpInf) \cite{peherstorfer2016data} and dynamic mode decomposition (DMD) \cite{tu2014dynamic}. Extensions of such methods to parameterized dynamical systems have been recently proposed, for OpInf in \cite{yildiz2021learning,mcquarrie2023nonintrusive} and also for DMD in \cite{andreuzzi2023dynamic,sun2023parametric}.  

For the class of frequency-domain methods, we focus on interpolation-based methods. For other classes of projection-based methods, we refer the reader to the survey \cite{morBenGW15}. As explained in this paper, reduced-order models for parametric systems are typically computed employing projection, using either a local or a global basis for matrix or transfer function interpolation. Some contributions in the last years include \cite{morAmsF11,morGeuL16,morYueFB19b,morGosGU21}. %\cite{morPanMEetal10,morAmsF11,morGeuPL13,morGeuL16,morYueFB19b,morGosGU21}.
Additionally,  (quasi-)optimal approaches were proposed in \cite{baur2011interpolatory,morHunMMetal22,mlinaric2024interpolatory}, which aim at imposing optimality in certain norms, e.g., the $\mathcal{H}_2 \otimes \mathcal{L}_2$ norm. 

Non-intrusive methods based on interpolation or approximate matching (using least squares fitting) of transfer function measurements (of the underlying parameterized rational transfer function) have been quite prolific in the last decades, with the following prominent contributions. First (i) extensions of the Loewner Framework (LF) to multivariate rational approximation by interpolation \cite{AIL2012,IA2014,VQPV2023} together with the AAA (Adaptive Antoulas-Anderson) algorithm  for multivariate functions \cite{CBG2023,gosea2022data}. Second (ii) extensions of the Vector Fitting framework to multivariate rational approximation, including the generalized  Sanathanan-Koerner iteration in \cite{bradde2022data,zanco2018enforcing}; these works are mostly concerned with imposing stability and passivity guarantees for macro model generation in the field of electronics.

\subsection{Connection with the Kolmogorov Superposition Theorem}

Problem no. 119, in the book of Polya and Szeg\"o \cite{polya}, asks the question: \textit{Are there actually functions of three variables?} Stated differently: is it possible to use compositions of functions of two or fewer variables to express any function of three variables? This question is related to Hilbert's 13th problem \cite{hilbert2000mathematical}: are there any genuine continuous multivariate functions?  As a matter of fact, Hilbert conjectured the existence of a three-variable continuous function that cannot be expressed in terms of the composition and addition of two-variable continuous functions. For a recent overview of this problem, see \cite{morris}.
The Kolmogorov Superposition Theorem (KST) answers this question negatively.  It shows that continuous functions of several variables can be expressed as the composition and superposition of functions of one variable. Thus, there are no \textit{true} functions of three variables. The present contribution presents connections between the Loewner framework and the KST restricted to rational functions. As a byproduct, \textit{taming the curse of dimensionality}, both in computational complexity, storage, and last but not least, numerical accuracy, is achieved.

\subsection{Connection to other fields}

Tensors are generalizations of vectors and matrices in multiple dimensions. Applications include, among others, the fields of signal processing (e.g., array processing), scientific computing (e.g., multivariate functions discretization), and quantum computing (e.g., simulation of quantum many-body problems). We refer the reader to the survey \cite{kolda2009tensor} for additional information and a detailed discussion. However, explicitly working with tensors, especially of higher dimensions, is not a trivial task. The number of elements in a tensor increases exponentially with the number of dimensions, and so do the computational/memory requirements. The exponential dependency, together with the challenges that arise from it, are connected to the curse of dimensionality (\textbf{C-o-D}).

Tensor decompositions are particularly important and relevant for several strenuous computational tasks since they can potentially alleviate the curse of dimensionality that occurs when working with high-dimensional tensors, as explained in \cite{vervliet2014breaking}. Such a decomposition can accurately represent and substitute the tensor, i.e., one may use it instead of explicitly using the original tensor itself. More details and an extensive literature survey of low-rank tensor approximation techniques, including canonical polyadic decomposition, Tucker decomposition, low multilinear rank approximation, and tensor trains and networks, can be found in \cite{grasedyck2013literature}.

Tensorization and Loewner matrices were recently connected in the contribution \cite{debals2015lowner}. There, a collection of one-dimensional (standard) Loewner matrices is reshaped as a third-dimensional tensor, for which the block term decomposition (BTD) is applied; the procedure is named "Loewnerization". The application of interest is blind signal separation.

Nonlinear eigenvalue problems (NEPs) can be viewed as a generalization of the (ordinary) eigenvalue problem to equations that depend nonlinearly on the parameters. Linearization techniques allow reformulating any polynomial EP as a larger linear eigenvalue problem and then applying the established (classical) algorithms to solve it. Other linearizations involve rational approximation, e.g., \cite{lietaert2022automatic,guttel2022robust}, that involve the usage of the rational Krylov or the AAA algorithms, together with \cite{brennan2023contour}, which uses the Loewner and Hankel frameworks in the context of contour integrals.

\subsection{Problem statement}

A linear-in-state dynamical system parameterized in terms of the parameters of
$\cS=[\var{2},\dots,\var{\ord}]^\top\subset \IC^{\ord-1}$, is characterized in state-space representation by the following equations:
\begin{equation}\label{eq:ss_pLTI}
    %\begin{cases}
        \frE(\cS) \dot{\bx}(t;\cS) = \frA(\cS)\bx(t;\cS) + \frB(\cS)\bu(t),~~
        \by(t;\cS) = \frC(\cS)\bx(t;\cS),
    %\end{cases},
\end{equation}
where $\dot{\bx}(t;\cS)$ refers to the derivative of $\bx(t;\cS) \in \IR^M$, with respect to the time variable $t$. Additionally, the $n_u$ control inputs are collected in the vector $\bu(t) \in \IR^{n_u}$, while the $n_y$ outputs are observed in the vector $\by(t;\cS) \in \IR^{n_y}$. Finally, the dimensions of the system matrices appearing in the state-space realization \cref{eq:ss_pLTI} are as follows: $\frE(\cS), \frA(\cS) \in \IR^{M \times M}$,  $\frB(\cS) \in \IR^{M \times n_u}$, $\frC(\cS) \in \IR^{n_y \times M}$. For simplicity of exposition, we consider only the single-input and single-output (SISO) scenario in what follows, i.e., $n_u=n_y=1$. The extension to multi-input multi-output (MIMO) systems will be left to future research, e.g., based on the formulation in \cite{VQPV2023}. In the sequel, particular attention is allocated to the exposition of a solution that 
\textbf{tames} the curse of dimensionality (\textbf{C-o-D}).

\begin{remark}[Taming the \textbf{C-o-D}]
    Throughout this work, the expression "taming the curse of dimensionality" will be used to emphasize the decoupling of variables, which drastically reduces both (i) the complexity of computation of barycentric weights in terms of \flop, (ii) the memory storage requirements, and (iii) numerical accuracy.
\end{remark}

Transforming the differential equation in \cref{eq:ss_pLTI} using the unilateral Laplace transform, the time variable $t$ becomes $\var{1}$, and solving for the transformed state variable, we have: 
%\begin{equation}\label{eq:XU}
$\frX(\var{1};\cS) = \left[\var{1} \frE(\cS)-\frA(\cS)\right]^{-1} \frB(\cS)\frU(\var{1})$.
%\end{equation}
Similarly, transforming the second equation in \cref{eq:ss_pLTI} we obtain: $\frY(\var{1};\cS) = \frC(\cS)\frX(\var{1};\cS)$. These equations yield
%\begin{equation}\label{eq:YHU}
$\frY(\var{1};\cS) = \frC(\cS) \left[\var{1} \frE(\cS)-\frA(\cS)\right]^{-1} \frB(\cS)\frU(\var{1})$.
%=\bH(\var{1},\cS)\bU(\var{1}),
%\end{equation}
The transfer function of the parametric linear time-invariant (pLTI) system in \cref{eq:ss_pLTI} is given by 
\begin{equation}\label{eq:H}
\frH(\var{1},\var{2},\dots,\var{\ord}) =  \frC(\cS) \left[ \var{1} \frE(\cS) - \frA(\cS)  \right]^{-1} \frB(\cS) \in \IC.
\end{equation}
It is a multivariate rational function involving $\ord$ variables $\var{l}\in \IC$, $l=1,\dots,\ord$.
%including the ones in $\cS$ but also the frequency or Laplace variable, denoted by $\var{1}$. 

We denote the complexity of each variable $\var{l}$ with the value $d_l$ (the highest degree in which the variable occurs in both polynomials describing the rational function shown above) and say that $\frH(\var{1},\var{2},\dots,\var{\ord})$ in \cref{eq:H} is of complexity $(d_1,d_2,\dots,d_\ord)$.

As we are interested in the non-intrusive data-driven setup, let us now consider that the function in \cref{eq:H} is not explicitly known. Instead, one has access to evaluations at (support or interpolatory) points $\lan{1},\lan{2},\dots,\lan{\ord}$ along $\var{1},\var{2},\dots \var{\ord}$, leading to measurements $\bw_{j_1,j_2,\dots,j_\ord}$, for $j_l=1,\ldots,k_l$, where $l=1,\ldots,\ord$. 

Under some assumptions detailed in the sequel, we seek a reduced multivariate rational model, pROM, $\bH$ given as
\begin{equation}\label{eq:Hhat}
 \bH(\var{1},\var{2},\dots,\var{\ord}) = \bC \bPhi(\var{1},\var{2},\dots,\var{\ord})^{-1}\bG \in \IC,
\end{equation}
where the vectors $\bC^\top,\bG\in \IC^{m}$ and square matrix $\bPhi\in \IC^{m \times m}$ define a generalized realization, detailed latter. We denote this realization with the triple $(\bC,\bPhi,\bG)$, being the output, inverse of the resolvent, and input operators.

In the sequel, we concentrate on continuous-time dynamical systems. Therefore, the first variable $\var{1}$ will be associated with the dynamic Laplace one, while $\var{2},\dots,\var{\ord}$ will stand for non-dynamic parametric variables (in most of cases, they will real-valued, although a complex form is also possible). Note that a similar discrete sampled-time model can be obtained using the $z$-transform  (see e.g. \cite{VKP2020}). In addition, one may also notice that \cref{eq:H} may be any multivariate real or complex-valued function.

\subsection{Historical notes}

The Loewner matrix was introduced by Karl L\"owner in the seminal paper published nine decades ago \cite{Loew34}, for the study of matrix convexity. It has been further studied and used in multiple works dealing with data-driven rational function approximation with application in system theory at large. In \cite{AA86}, the Loewner matrix constructed from data is used to compute the barycentric coefficients to obtain the rational approximating function in the Lagrange basis. This is also known as the one-sided Loewner framework. One major update was proposed in 2007 by \cite{MA2007}, introducing the two-sided Loewner framework, constructing a rational model with minimal McMillan degree, and constructing a realization with minimal order, directly from the data. Reference \cite{ALI2017} provides a comprehensive review of the case of single-variable linear systems, gathering most of the results up to 2017. In \cite{AIL2012}, the one-sided framework is extended to two variables/parameters, and its corresponding Lagrange basis realization is derived. Later in \cite{IA2014}, the multi-parameter Loewner framework (mpLF) is presented together (for up to three parameters) with the barycentric form, but without the description of a realization.  Recently, tutorial contributions for the LF, with its extensions and applications, were proposed in \cite{ALI2017,morKarGA19a}. Then, \cite{GPA2022} provides a comprehensive overview including parametric and nonlinear Loewner extensions,  practical applications, and test cases from aerospace engineering and fluid dynamics.

The AAA algorithm in \cite{nakatsukasa2018} represents an iterative and adaptive version of the method in \cite{AA86}, which makes use of the barycentric representation of rational interpolants. For more details on barycentric forms and connections to Lagrange interpolation, we refer the reader to \cite{BT2004barycentric}.
In \cite{CBG2023}, the parametric AAA (p-AAA) algorithm is introduced. This extends the original AAA formulation of \cite{nakatsukasa2018} to multivariate problems appearing in the modeling of parametric dynamical systems. The p-AAA can be viewed as an adaptation of the mpLF, in that it also uses multi-dimensional Loewner matrices and computes barycentric forms of the fitted rational functions. The p-AAA algorithm chooses the interpolation points in a greedy manner and enriches the Lagrange bases until an approximation (with desired accuracy) is reached. 

In addition, multiple application-oriented research papers utilizing the Loewner framework have been suggested, as well as multiple adaptations of the original version. It is worth noticing that the multivariate versions were poorly studied, and if so, limited to three variables. In this paper, we address these two points.

\subsection{Contribution and paper organization}

Our goal is to provide a complete and scalable solution to the data-driven multivariate reduced-order model construction. The results provided in \cite{AIL2012,IA2014} are extended. The main result consists of the decoupling of variables, thus taming the curse of dimensionality. The contribution is five-fold:
\begin{itemize}
    \item[(i)] We propose a multivariate generalized realization allowing to describe with state-space form (with limited complexity), any multivariate rational functions (\Cref{sec:realization} and  \Cref{thm:realization}); 
    \item[(ii)] The $\ord$-D multivariate Loewner matrix is introduced, and is shown to be the solution of a set of coupled Sylvester equations (in \Cref{sec:LL} and \Cref{thm:Loen_sylv});
    \item[(iii)] As the dimension $N$ of the $\ord$-D Loewner matrix exponentially increases with the number of data (i.e. variables and associated degrees), we demonstrate that the associated null space can be obtained using a collection of 1-D Loewner matrices; this leads to the reduction of computational complexity from ${\cal O}(N^3)$ to less than ${\cal O}(N^{1.5})$ when $\ord>5$, to a drastic reduction of storage necessities (\Cref{sec:cod} and, \Cref{thm:cod}, \Cref{thm:complexity} and \Cref{thm:memory}), and to increased numerical accuracy;
    \item[(iv)] A connection with Hilbert's 13th problem and the Kolmogorov Superposition
    Theorem is established  (first with \Cref{thm:decoupling}, and then in  \Cref{sec:hilb});
    \item[(v)] Two data-driven multivariate generalized model construction algorithms in \Cref{sec:algo}, i.e., Algorithm \ref{algo:LL_nD} and Algorithm \ref{algo:LL_nD_adaptive}) are provided;
\end{itemize}

Among these contributions, items (i), (iii), and (iv) are the main theoretical results towards \textbf{taming the curse of dimensionality}, for data-driven multivariate function and realization construction. More specifically, item (i) provides a new realization structure applicable to any $\ord$-dimensional rational function expressed in the Lagrange basis, where the complexity (e.g., dimension of matrices) is controlled. Item (iii) shifts the problem of null space computation of a large-scale $\ord$-D Loewner matrix to the null space computation of a set of small-scale $1$-D Loewner matrices, leading to the very same Lagrange coefficients required in the pROM construction, but with a much lower computational effort. Finally, item (iv) links this result to the Kolmogorov Superposition Theorem by explicitly detailing the decoupling of variables.

\begin{remark}[Connection to tensors]
    Stepping back from the dynamical systems perspective, we also note that the proposed approach provides a candidate solution to tensor approximation problems. Indeed, we approximate any problem characterized by tensorized data sets by means of a rational function. This is done by taming the \textbf{C-o-D} as pointed out in (iii). %thanks to the recursive null space construction in \cref{thm:cod}. 
    Established tensor decompositions may provide a bridge to the philosophy of our proposed method, which requires breaking down the complex problem by eliminating one dimension at every step. 
\end{remark}

\begin{remark}[Connections to NEPs]
The realization proposed addresses the problem of linearization in the context of NEPs. Specifically, %since we are in the multivariate case, 
our realization achieves multi-linearizations of the associated NEPs. Furthermore, in the bivariate case,
if we split the two variables, we achieve a linearization. In the case of more than two variables, if we arrange them as the frequency variable $\var{1}$ in the first group (or right variable), and all the other variables (parameters) in the second group (left variables), we achieve a linearization in $\var{1}$. 
\end{remark}

The remainder of the paper is organized as follows. \Cref{sec:realization} provides the starting point and initial seed by introducing a \textbf{generalized multivariate rational functions realization} framework. From this form,  a specific structure, appropriate to the problem treated here, is chosen. Since \textbf{data/measurements} are the main ingredient of the data-driven framework used, \Cref{sec:data} introduces the data notations, in a general $\ord$-variable case. Then, in \Cref{sec:LL}, the data-based \textbf{$\ord$-D  Loewner matrices} are defined, and a connection with cascaded Sylvester equations is made. The relation with the multivariate barycentric rational form (using a Lagrange basis), as well as the multivariate realization, is also established. In  \Cref{sec:cod}, the numerical complexity induced by the $\ord$-D null space computation is reduced thanks to the decomposition into a recursive set of 1-D Loewner matrix null space computations instead. This decomposition allows a drastic reduction of the complexity, thus \textbf{taming the curse of dimensionality}. 

Finally, \Cref{sec:hilb} details the connection with the Kolmogorov Superposition Theorem. From all these contributions, two algorithms are sketched in \Cref{sec:algo}, indicating complete procedures for the construction of a non-intrusive multivariate dynamical model realization from input-output data.  Numerical examples that illustrate the effectiveness of the proposed process are described in \Cref{sec:exple}\footnote{An exhaustive account of numerical examples and results, together with all necessary data and codes to reproduce the numerics, can be found at the addresses provided at the end of this manuscript.}. Finally, \Cref{sec:conclusion} concludes the paper and provides an outlook on addressing open issues and future research.

%%%%%%%%%%%%%%%%%%%%%%%%%%%%%%%%%
\section{Realizations of multivariate rational functions} \label{sec:realization}
The starting point of this study is the new generalized realization for multivariate rational functions. This leads to the construction of a realization involving internal variable equations, from a $\ord$-variable transfer function in the form \cref{eq:H}. This is expressed in the Lagrange basis. After some preliminaries, the result is stated in \Cref{thm:realization}. This stands as the first major contribution of this paper.

\subsection{Preliminaries}

%Let us first introduce definitions and intermediate results on which we build the generalized realization.

First, we derive the pseudo-companion Lagrange basis, then we provide the multi-row and multi-column indices and coefficient matrices propositions, and finally, results on the characteristic polynomial.

\subsubsection{Pseudo-companion Lagrange matrix}

Consider a rational function $\frH$ in $\ord$ variables, namely $\var{j}$, each of degree $d_j$ ($j=1,\cdots,\ord$), as in \cref{eq:H}. We will consider the \textbf{Lagrange basis} of polynomials. The \textbf{Lagrange pseudo-companion matrix} considered here is denoted $\IXlag{j}$ and is defined as follows.

\begin{definition}\label{def:Xlag}
Let the Lagrange monomials in the variable $\var{j}$ be denoted as $\xlag{j}{i}=\var{j}-\vargen{\lambda_i}{j}$, where $i=1,\cdots,n_j$ and $\vargen{\lambda_i}{j}\in\IC$. Associated with the $j$-th variable, we define the pseudo-companion form  matrix in the Lagrange basis as:
\begin{equation}\label{eq:def:Xlag}%\small
    \IXlag{j} = 
    \left[
    \begin{array}{c}
        \bX^{\textrm{Lag}}(\var{j}) \\ \hline
        \vargen{\bq^{\textrm{Lag}}}{j}
    \end{array}
    \right]=
    \left[
    \begin{array}{ccccc}
        \xlag{j}{1} & -~ \xlag{j}{2} & 0 & \cdots & 0\\
        \xlag{j}{1} & 0 & -~ \xlag{j}{3} & \cdots & 0\\
        \vdots & \vdots & \ddots & \vdots & \vdots \\
        \xlag{j}{1} & 0 & \cdots & 0 &-~\xlag{j}{n_j} \\ \hline 
        \vargen{q_1}{j}&\vargen{q_2}{j} & \cdots & \vargen{q_{n_j-1}}{j} & \vargen{q_{n_j}}{j}
    \end{array}
    \right] \in \IC^{n_j\times n_j}[\var{j}],
\end{equation}
with values $\vargen{q_i}{j}$, $i=1,\ldots,n_j$  chosen so that $\IXlag{j}$ is unimodular, i.e. $\det(\IXlag{j})=1$\footnote{One may chose $1/\vargen{q_i}{j}=\Pi_{k\neq i}(s_i-\lani{j}{k})$ for $k=1,\cdots,n_j$.}.%, thus $\bGamma$ and $\bDelta$ are also unimodular. This is an essential property.
\end{definition}

%{\color{blue}\fbox{
Following the general interpolation framework,
%}}, 
the $\var{j}$ ($j=1,\cdots,\ord$) variables of $\frH$ \cref{eq:H} are split into \textbf{left} and \textbf{right} variables, or equivalently into \textbf{row} and \textbf{column} variables. For simplicity of exposition  (and by permutation, if necessary), we assume that $\var{1},\cdots,\var{k}$ are the column (right) variables and $\var{k+1},\cdots,\var{\ord}$ are the row (left) variables ($0<k<\ord$, $k\in \IN$). Based on this data,  we define two \textbf{Kronecker products} of the associated pseudo-companion matrices:

\begin{definition}\label{def:gamma_delta_lag}
Consider the column $\var{1},\cdots,\var{k}$  and row $\var{k+1},\cdots,\var{\ord}$ variables. We define the Kronecker products of the pseudo-companion matrices \cref{eq:def:Xlag} as 
\begin{equation}\label{eq:def:gamma_delta_lag}
    \begin{array}{rcl}
        \bGamma^{\textrm{Lag}}&=& \IXlag{1}\otimes\IXlag{2} \otimes \cdots \otimes\IXlag{k} \in \IC^{\kappa\times\kappa}[\var{1},\cdots,\var{k}],\\[1mm]
        \bDelta^{\textrm{Lag}} &=& \IXlag{k+1}\otimes \IXlag{k+2} \otimes \cdots \otimes\IXlag{\ord}  \in \IC^{\ell\times\ell}[\var{k+1},\cdots,\var{\ord}],
    \end{array}
\end{equation}
where $\kappa = \prod_{j=1}^{k}n_j$ and $\ell = \prod_{j=k+1}^{\ord}n_j$. These matrices are square and unimodular. For brevity, we will now denote them as $\bGamma$ and $\bDelta$.
\end{definition}

\subsubsection{Multi-row/multi-column indices and the coefficient matrices}

We will show how to set up the matrices containing the coefficients of the numerator and denominator polynomials. The key to this goal is an appropriate definition of row/column multi-indices.

\begin{definition}\label{def:indices}
Each column of $\bGamma$ and each column of  $\bDelta$ defines a unique multi-index $I_q$, $J_r$. We will refer to these indices as \textbf{row-} and \textbf{column-multi-indices}  (the latter, because the $\bDelta$ matrix enters in transposed form), as follows:
\begin{equation*}
I_q=\left[i_{k+1}^q,\,i_{k+2}^q,\,\cdots,\,i_\ord^q\right]
\text{ , }
J_r=\left[j_{1}^r,\,j_{2}^r,\,\cdots,\,j_k^r\right] 
\text{ , $q=1,\cdots,\ell$, $r=1,\cdots,\kappa$}.
\end{equation*}
Each multi-index $I_q$ ($J_r$) contains the indices of the Lagrange monomials involved in the $q$-th ($r$-th) column of $\bDelta$ ($\bGamma$), respectively.
\end{definition}

\begin{remark}
The \textbf{ordering} of these multi-indices is imposed by the ordering of the Kronecker products in \Cref{def:gamma_delta_lag}. More details are available in the examples.
\end{remark}

\subsubsection{The coefficient matrices}

We consider the rational function $\bH$ as
%in the Lagrange basis:
\begin{equation}\label{eq:Hlag}
    \bH(\var{1},\var{2},\cdots,\var{\ord}) =\dfrac{\sum_{j_1=1}^{k_1}\sum_{j_2=1}^{k_2}\cdots \sum_{j_\ord=1}^{k_\ord} \frac{c_{j_1,j_2,\cdots,j_\ord}\bw_{j_1,j_2,\cdots,j_\ord}}{\pare{\var{1}-\lan{1}}\pare{\var{2}-\lan{2}}\cdots\pare{\var{\ord}-\lan{\ord}}}}{\sum_{j_1=1}^{k_1}\sum_{j_2=1}^{k_2} \cdots \sum_{j_\ord=1}^{k_\ord} \frac{c_{j_1,j_2,\cdots, j_\ord}}{\pare{\var{1}-\lan{1}}\pare{\var{2}-\lan{2}}\cdots \pare{\var{\ord}-\lan{\ord}}}},
\end{equation}
where $c_{j_1,j_2,\cdots,j_\ord}\in\IC$ are the barycentric weights and $\bw_{j_1,j_2,\cdots,j_\ord}\in\IC$ the data evaluated at the combination of interpolation (support) points in $\{\lan{1},\lan{2},\cdots,\lan{\ord}\}$, or equivalently, following \Cref{def:indices}, as below
\begin{equation*}
\bH(\var{1},\var{2},\cdots,\var{\ord})=\dfrac{\sum_{q=1}^{\ell}\sum_{r=1}^{\kappa} \frac{\beta_{I_q,J_r}}{\prod_{a\in I_q}\prod_{b\in J_r}(\var{a}-\lan{a})(\var{b}-\lan{b})}}{\sum_{q=1}^{\ell}\sum_{r=1}^{\kappa} \frac{\alpha_{I_q,J_r}}{\prod_{a\in I_q}\prod_{b\in J_r}(\var{a}-\lan{a})(\var{b}-\lan{b})} }
\end{equation*}
We now define matrices of size $\ell\times\kappa$:
\begin{equation}\label{eq:ABlag}\scriptstyle%\small
\IAlag=\left[\begin{array}{cccc}
\alpha_{I_1,J_1}&\alpha_{I_1,J_2}&\cdots&a_{I_1,J_{\kappa}} \\
\alpha_{I_2,J_1}&\alpha_{I_2,J_2}&\cdots&\alpha_{I_2,J_{\kappa}} \\
\vdots&\vdots&\ddots&\vdots\\
\alpha_{I_{\ell},J_1}&\alpha_{I_{\ell},J_2}&\cdots&\alpha_{I_{\ell},J_{\kappa}} \\
\end{array}\right],~
\IBlag=\left[\begin{array}{cccc}
\beta_{I_1,J_1}&\beta_{I_1,J_2}&\cdots&\beta_{I_1,J_{\kappa}} \\
\beta_{I_2,J_1}&\beta_{I_2,J_2}&\cdots&\beta_{I_2,J_{\kappa}} \\
\vdots&\vdots&\ddots&\vdots\\
\beta_{I_{\ell},J_1}&\beta_{I_{\ell},J_2}&\cdots&\beta_{I_{\ell},J_{\kappa}} \\
\end{array}\right].%\in\IC^{\ell\times\kappa}.
\end{equation} 
Notice that $\IAlag$ contains the appropriately arranged barycentric weights of the denominator of $\bH$ (i.e. the entries of a vector in the null space of the associated Loewner matrix), while $\IBlag$, contains the barycentric weights of the numerator, i.e. the product of the denominator barycentric weights with the corresponding values of $\bH$.

\subsubsection{Characteristic polynomial in the barycentric representation}

We consider the single-variable polynomial $\bp(s)$ of degree (at most) $n$ in the variable $s$. 
For a \textbf{barycentric} or \textbf{Lagrange representation}, the following holds (by expanding the determinant of $\bM$ with respect to the last row).

\begin{proposition}\label{prop:polynomial1}
Given the polynomial $\bp(s)$ of degree less than or equal to $n$, 
expressed in a Lagrange basis as
$\bp(s)=\bpi\left(\frac{\alpha_1}{\bx_1}+\cdots+\frac{\alpha_{n+1}}{\bx_{n+1}}\right)$
where $\bpi=\prod_{i=1}^{n+1}\bx_i$.
It follows that $\det(\bM)=\sum_{i=1}^{n+1}\alpha_i\prod_{j\neq i}\bx_j=\bp(s)$, where $\bM$ is the pseudo-companion form matrix as in \Cref{def:Xlag}, where
$\xlag{j}{i}$ is replaced by $\bx_j$ and $\vargen{q_1}{j}$ by $\alpha_j$.
\end{proposition}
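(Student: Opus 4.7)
The plan is to follow the hint: expand $\det(\bM)$ by Laplace along the last row,
$$\det(\bM)=\sum_{i=1}^{n+1}(-1)^{n+1+i}\,\alpha_i\,\det(\bM_{n+1,i}),$$
and identify the cofactor $(-1)^{n+1+i}\det(\bM_{n+1,i})$ with $\prod_{j\neq i}\bx_j$. Once this is established, summing over $i$ gives $\det(\bM)=\sum_i\alpha_i\prod_{j\neq i}\bx_j=\bpi\sum_i\alpha_i/\bx_i=\bp(s)$, which is exactly the claim.

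For the base case $i=1$, the minor $\bM_{n+1,1}$ obtained by deleting the last row and first column of $\bM$ is an $n\times n$ diagonal matrix with entries $-\bx_2,\dots,-\bx_{n+1}$. Hence $\det(\bM_{n+1,1})=(-1)^n\prod_{j\neq 1}\bx_j$, and the cofactor sign $(-1)^{n+2}=(-1)^n$ produces $\prod_{j\neq 1}\bx_j$, as needed.

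For the general case $i\geq 2$, the key observation is that row $i-1$ of $\bM_{n+1,i}$ contains only one nonzero entry, $\bx_1$ in the first column. Indeed, row $i-1$ of the original $\bM$ has only two nonzero entries, $\bx_1$ in column $1$ and $-\bx_i$ in column $i$, and the latter is removed when column $i$ is deleted. I would therefore expand $\det(\bM_{n+1,i})$ along row $i-1$, which reduces the determinant to $(-1)^{i}\bx_1\det(\bN)$, where $\bN$ is the $(n-1)\times(n-1)$ submatrix obtained by further deleting row $i-1$ and column $1$. A careful bookkeeping of the index shifts shows that $\bN$ is diagonal with diagonal entries $-\bx_2,\dots,-\bx_{i-1},-\bx_{i+1},\dots,-\bx_{n+1}$, so $\det(\bN)=(-1)^{n-1}\prod_{k\neq 1,i}\bx_k$. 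Combining yields $\det(\bM_{n+1,i})=(-1)^{n+i-1}\prod_{j\neq i}\bx_j$, and pairing with the outer cofactor sign $(-1)^{n+1+i}$ produces $\prod_{j\neq i}\bx_j$, matching the base case.

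The only mildly delicate step is the reindexing argument for $\bN$: after removing both column $1$ and column $i$ from the upper $n\times(n+1)$ block, the surviving off-diagonal pattern (originally $-\bx_{k+1}$ at position $(k,k+1)$) realigns onto the main diagonal of $\bN$. Once this is verified, everything else is direct sign arithmetic, and the proposition follows.
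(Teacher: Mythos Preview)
Your proof is correct and follows exactly the approach the paper indicates (expanding $\det(\bM)$ along the last row); you have simply filled in the cofactor computations that the paper leaves implicit. The reindexing argument for $\bN$ is sound, since removing row $i-1$ together with columns $1$ and $i$ shifts the nonzero entry $-\bx_{k+1}$ of each surviving row $k$ onto the main diagonal, as you claim.
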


Next, following \Cref{prop:polynomial1}, we consider two-variable polynomials $\bp(s,t)$ of degree (at most) $n$, $m$ in the  variables $s$, $t$, respectively. Let  $\bx_i(s)=s-s_i$, $s_i\in\IC$, $i=1,\cdots,n+1$, and $\by_j(t)=t-t_j$,  $t_j\in\IC$, $j=1,\cdots,m+1$, be the monomials constituting a Lagrange basis for two-variable polynomials of degree less than or equal to $n$, $m$, respectively. In other words:\\
%\begin{equation*}
$\bp(s,t)\,=\,\bpi\left[
\frac{\alpha_{1,1}}{\bx_1\by_1}+~\cdots~+\frac{\alpha_{1,m+1}}{\bx_{1}\by_{m+1}}+
\cdots+\frac{\alpha_{n+1,1}}{\bx_{n+1}\by_1}+~\cdots~+\frac{\alpha_{n+1,m+1}}{\bx_{n+1}\by_{m+1}}
\right]$, 
%\end{equation*}
which can be rewritten by highlighting the matrix form of \cref{eq:ABlag}, as,
\begin{equation*}
\bp(s,t)=\bpi\,\left[\frac{1}{\bx_1},\frac{1}{\bx_2},\cdots,\frac{1}{\bx_{n+1}}
\right]\left[
\begin{array}{cccc}
\alpha_{1,1}&\alpha_{1,2}&\cdots&\alpha_{1,m+1}\\
\alpha_{2,1}&\alpha_{2,2}&\cdots&\alpha_{2,m+1}\\
\vdots&\vdots&\ddots&\vdots\\ 
\alpha_{n+1,1}&\alpha_{n+1,2}&\cdots&\alpha_{n+1,m+1}\\
\end{array}\right]\left[\begin{array}{c}\frac{1}{\by_1}\\\frac{1}{\by_1}\\\vdots\\
\frac{1}{\by_{m+1}}\\
\end{array}\right],
\end{equation*}
where $\bpi=\prod_{i=1}^{n+1}\bx_i\prod_{j=1}^{m+1}\by_j$. Consider next, the pseudo-companion form matrices:
\begin{equation}\label{ST}%\scriptstyle%\small
\bS=\underbrace{\left[\begin{array}{ccccc}
\bx_1&-\bx_2& 0 & \cdots & 0\\
\bx_1&0&-\bx_3&\cdots&0\\
\vdots&\vdots&\ddots&\vdots&\vdots\\
\bx_1&0&\cdots&0&-\bx_{n+1}\\ \hline
\epsilon_1&\epsilon_2&\cdots&\epsilon_{n}&\epsilon_{n+1}\\
\end{array}\right]}_{\in\IC^{(n+1)\times(n+1)}[s]},~
\bT=\underbrace{\left[\begin{array}{ccccc}
\by_1&-\by_2& 0 & \cdots & 0\\
\by_1&0&-\by_3&\cdots&0\\
\vdots&\vdots&\ddots&\vdots&\vdots\\
\by_1&0&\cdots&0&-\by_{m+1}\\ \hline
\zeta_1&\zeta_2&\cdots&\zeta_{m}&\zeta_{m+1}\\
\end{array}\right]}_{\in\IC^{(m+1)\times(m+1)}[t]},
\end{equation}
where the constants $\epsilon_i$ and $\zeta_j$ are chosen so that $\det(\bS)=1$ and $\det(\bT)=1$\footnote{One may chose $1/\epsilon_i=\Pi_{j\neq i}(s_i-s_j)$ and  $1/\zeta_i=\Pi_{j\neq i}(t_i-t_j)$, for $i,j=1,\cdots,n,m$.}. The coefficients $\alpha_{i,j}$ are arranged in the form of a matrix $\IAlag{}\in\IC^{(n+1)\times(m+1)}$, as in \cref{eq:ABlag}:
\begin{equation*}
%\begin{array}{rcl}
\IAlag{}=\left[\begin{array}{cccc}
\alpha_{1,1}&\alpha_{1,2}&\cdots&\alpha_{1,m+1}\\
\alpha_{2,1}&\alpha_{2,2}&\cdots&\alpha_{2,m+1}\\
\vdots&\vdots&\ddots&\vdots\\
\alpha_{n+1,1}&\alpha_{n+1,2}&\cdots&\alpha_{n+1,m+1}\\
\end{array}\right],\\
\end{equation*}
or in a vectorized version (taken row-wise) $\mbox{vec}\,(\IAlag{})\in\IC^{1\times\kappa}$ such that 
$$
\mbox{vec}\,(\IAlag{})=\left[\alpha_{1,1},\alpha_{1,2},\cdots\alpha_{1,m+1}\mid\cdots\mid
\alpha_{n+1,1},\cdots,\alpha_{n+1,m+1}\right],
$$
where $\kappa=(n+1)(m+1)$. Consider also the Kronecker product $\bS\otimes\bT$, which turns out to be a square polynomial matrix of size $\kappa$. We form two matrices
\begin{equation}\label{eq:M1M2}
\bM_1=\underbrace{\left[\begin{array}{c}
(\bS\otimes\bT)(1\!:\!\kappa\!-\!1,:) \\[1mm]
\mbox{vec}\,(\IAlag{})\end{array}\right]}_{\in\IC^{\kappa\times\kappa}[s,t]}\text{ and }%,~~
\bM_2=\underbrace{\left[\begin{array}{cc}
\bS(1\!:\!n\!-\!1,:) & ~\bfz_{n-1,m-1}\\
\IAlag{} & ~\bT(1\!:\!m\!-\!1,:)^\top\end{array}\right]}_{\in\IC^{(n+m-1)\times(n+m-1)}[s,t]}.
\end{equation}

\begin{proposition}
The determinants of ~$\bM_1$~ and ~$\bM_2$~ are both equal to ~$\bp(s,t)$.
\end{proposition}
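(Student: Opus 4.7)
The plan is to prove $\det(\bM_1)=\bp(s,t)$ by a Laplace expansion along the appended last row, combined with the Kronecker identity $\mathrm{adj}(\bS\otimes\bT)=\mathrm{adj}(\bS)\otimes\mathrm{adj}(\bT)$, which holds here precisely because $\det(\bS)=\det(\bT)=1$. For $\bM_2$, the same single-variable cofactor identities will be extracted through a block Laplace expansion exploiting the zero block.

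First I would expand $\det(\bM_1)$ along its last row. Since the upper $\kappa-1$ rows of $\bM_1$ coincide with those of $\bS\otimes\bT$, the cofactors of the appended row are inherited from $\bS\otimes\bT$, giving
\begin{equation*}
\det(\bM_1)=\sum_{i=1}^{n+1}\sum_{j=1}^{m+1}\alpha_{i,j}\,C_{\kappa,k(i,j)}(\bS\otimes\bT),\qquad k(i,j)=(i-1)(m+1)+j.
\end{equation*}
Using $\det(\bS\otimes\bT)=\det(\bS)^{m+1}\det(\bT)^{n+1}=1$, one has $(\bS\otimes\bT)^{-1}=\bS^{-1}\otimes\bT^{-1}$ and hence $\mathrm{adj}(\bS\otimes\bT)=\mathrm{adj}(\bS)\otimes\mathrm{adj}(\bT)$. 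Reading off the entry in row $k(i,j)$ and column $\kappa=(n+1)(m+1)$ of this Kronecker product yields the factorization $C_{\kappa,k(i,j)}(\bS\otimes\bT)=\mathrm{adj}(\bS)_{i,n+1}\,\mathrm{adj}(\bT)_{j,m+1}$.

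The remaining step identifies these single-variable cofactors. Because the $(n+1,i)$-cofactor of $\bS$ does not depend on the last-row entries $\epsilon_k$, Proposition~\ref{prop:polynomial1} (applied by comparing coefficients of a generic last row) yields $\mathrm{adj}(\bS)_{i,n+1}=\prod_{i'\ne i}\bx_{i'}$, and analogously $\mathrm{adj}(\bT)_{j,m+1}=\prod_{j'\ne j}\by_{j'}$. Substituting gives
\begin{equation*}
\det(\bM_1)=\sum_{i,j}\alpha_{i,j}\prod_{i'\ne i}\bx_{i'}\prod_{j'\ne j}\by_{j'}=\bp(s,t),
\end{equation*}
the last equality being the Lagrange/coefficient-matrix form of $\bp(s,t)$ displayed just before the proposition.

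For $\bM_2$ the strategy is to exploit the zero block: the top block-row $[\bS(1{:}n{-}1,:)\ \bfz]$ involves only $s$, so a generalized (Cauchy--Binet--type) Laplace expansion decouples the $s$- and $t$-contributions, leaving an $s$-minor of $\bS$ and a $t$-minor of $\bT$ that couple through a single entry of $\IAlag$. Summing those products reproduces exactly $\sum_{i,j}\alpha_{i,j}\prod_{i'\ne i}\bx_{i'}\prod_{j'\ne j}\by_{j'}$, once again via Proposition~\ref{prop:polynomial1} applied in each variable. The main obstacle I expect is purely combinatorial bookkeeping: aligning the row-wise vectorization $\mathrm{vec}(\IAlag)$ with the Kronecker column order in $\bM_1$, and for $\bM_2$ pinning down the correct block-Laplace signs and index matching so that the surviving minors are exactly the Lagrange products appearing in $\bp(s,t)$. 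Once this alignment is settled, both determinants collapse to the same single-variable identity supplied by Proposition~\ref{prop:polynomial1}.
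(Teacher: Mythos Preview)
Your treatment of $\bM_1$ is correct and matches the paper's one-line argument (expand along the last row); your use of the adjugate identity $\mathrm{adj}(\bS\otimes\bT)=\mathrm{adj}(\bS)\otimes\mathrm{adj}(\bT)$ under $\det\bS=\det\bT=1$ is a clean way to make the cofactor factorization explicit, whereas the paper leaves this implicit.

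For $\bM_2$ you take a genuinely different route. The paper does not compute $\det(\bM_2)$ directly at all: it simply forward-references \Cref{thm:realization}, whose proof (in \Cref{sec:proof:realization}) treats $\bM_2$ as the $\bPhi_{11}$ block of the full realization and extracts the denominator via the block inversion $\bC_2\bPhi_{22}^{-1}\bPhi_{21}\bPhi_{11}^{-1}\bG_1$, together with \Cref{prop:rc} identifying the last row of $\bDelta^{-\top}$ and the last column of $\bGamma^{-1}$ as the Lagrange reciprocals $1/\bx_i$, $1/\by_j$. Your direct block-Laplace approach is more self-contained and avoids the forward reference; the paper's approach has the advantage that the $\bM_2$ claim falls out as a by-product of the realization machinery it needs anyway. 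Your anticipated difficulty with signs and index matching is real but manageable: the key observation is that deleting column $i$ from $\bS(1{:}n,:)$ leaves exactly the minor $\prod_{i'\ne i}\bx_{i'}$ (this is \Cref{prop:polynomial1} again), and the remaining lower-right block then reduces to the analogous $\bT$-minor after deleting row $j$ of $\IAlag$.
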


\begin{proof}%\rm
The first expression follows by expanding the determinant of $\bM_1$ with respect to the last row. For the validity of the second expression, see  \Cref{thm:realization}.
\end{proof}

\begin{remark}[Curse of dimensionality]
This result shows that by splitting the variables into left and right, the \textbf{C-o-D} is alleviated, as in the former case the dimension is $(n+1)(m+1)$,  while in the latter the dimension is $n+m-1$.
\end{remark}

\subsection{The multivariate realization in the Lagrange basis}

%The first main result is stated in \Cref{thm:realization}. Its proof is given subsequently.

\subsubsection{Main result}

The result provided in \Cref{thm:realization} yields a systematic way to construct a realization as in \cref{eq:Hhat}, from a transfer function $\bH$ given in a barycentric / Lagrange form \cref{eq:Hlag}.

\begin{theorem}\label{thm:realization} Given quantities in \Cref{def:Xlag} and \Cref{def:gamma_delta_lag}, a $2\ell+\kappa-1=m$-th order realization $(\bC,\bPhi,\bG)$ of the multivariate function $\frH$ in \cref{eq:H}, in barycentric form \cref{eq:Hlag}, satisfying $\bH(\var{1},\cdots,\var{\ord}) = \bC \bPhi(\var{1},\cdots,\var{\ord})^{-1}\bG$, 
is given by,
\begin{align}\label{eq:thm:realization}
%\scriptstyle%\small
\bPhi(\var{1},\cdots,\var{\ord}) &= 
    \left[ \begin{array}{c:c:c}
    \bGamma(1:\kappa-1,:) & \bzer_{\kappa-1,\ell-1} & \bzer_{\kappa-1,\ell} \\ \hdashline
    \IAlag & \bDelta(1:\ell-1,:)^\top & \bzer_{\ell,\ell} \\ \hdashline
    \IBlag &  \bzer_{\ell,\ell-1} & \bDelta^\top 
    \end{array}\right] \in \IC^{m\times m}[\var{1},\cdots,\var{\ord}],     \nonumber
    \\%[5mm]
    \bG &= \left[ \begin{array}{c}
    \bzer_{\kappa-1,1} \\ \hdashline \bDelta(\ell,:)^\top\\ \hdashline\bzer_{\ell,1}
    \end{array}\right] \in\IC^{m\times 1} \mbox{ and }
    \bC = \left[ \begin{array}{c:c:c}
    \bzer_{1,\kappa} & \bzer_{1,\ell-1} & -\be^\top_\ell
\end{array}\right]\in\IC^{1\times m},
\end{align}
where $\be_r$ denotes the $r$-th unit vector (i.e., all entries are zero except the last one, equal to 1) and where $\IAlag, \IBlag\in\IC^{\ell\times \kappa}$ are appropriately chosen, according to the pseudo-companion basis used.
\end{theorem}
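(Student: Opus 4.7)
The plan is to identify $\bC\bPhi^{-1}\bG$ as the ratio of two polynomial determinants and then match them against the numerator and denominator of the barycentric form \cref{eq:Hlag}. The starting point is the Schur-complement identity
\[
\bC\bPhi^{-1}\bG \;=\; -\frac{\det\tilde\bPhi}{\det\bPhi},
\qquad
\tilde\bPhi=\begin{bmatrix} \bPhi & \bG \\ \bC & 0 \end{bmatrix},
\]
obtained by eliminating the $1\times1$ zero block in the bottom-right corner of $\tilde\bPhi$ against the (generically) invertible $\bPhi$. The task reduces to evaluating both determinants.

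For $\det\bPhi$, exploit the invertibility of the $\bDelta^\top$ block sitting in the bottom-right corner. Apply the block column operation $(\text{left cols}) \leftarrow (\text{left cols}) - (\text{right cols})\,\bDelta^{-\top}\IBlag$. Because the top and middle row bands carry zeros in the right column band, only the bottom band is affected, and $\IBlag$ is cancelled, leaving a block-diagonal matrix with diagonal blocks $\bDelta^\top$ and $M_1 := \begin{bmatrix} \bGamma(1:\kappa-1,:) & 0 \\ \IAlag & \bDelta(1:\ell-1,:)^\top \end{bmatrix}$. Since $\det\bDelta^\top=1$, this gives $\det\bPhi=\det(M_1)$. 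Recognising $M_1$ as the direct multivariate analogue of $\bM_2$ in \cref{eq:M1M2}, we obtain
\[
\det(M_1) \;=\; \pm\,\bpi\cdot D(\var{1},\ldots,\var{\ord}),
\]
where $D$ is the barycentric denominator of \cref{eq:Hlag} and $\bpi$ is the product of all Lagrange monomials $\var{a}-\lan{a}$ appearing there. The multivariate extension of Proposition 4.2 needed here follows by induction on $\ord$, expanding the last row of each successive Kronecker factor of $\bGamma$ and invoking the bivariate identity at each step.

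For $\det\tilde\bPhi$, Laplace-expand along the last row, whose only nonzero entry is $-1$ in column $m$; the sign $(-1)^{(m+1)+m}=-1$ combines with the $-1$ to give $\det\tilde\bPhi=\det\tilde\bPhi^{(m+1,m)}$. Next, cyclically shift the $\bG$ column, carrying $\bDelta(\ell,:)^\top$ in the middle row band, into the middle column band so that it unites with $\bDelta(1:\ell-1,:)^\top$ to form a full $\bDelta^\top$; this $(\ell-1)$-swap contributes a sign $(-1)^{\ell-1}$. Then apply the column-operation analogue of the denominator step, but directed at the middle block (using the newly-assembled full $\bDelta^\top$) to eliminate $\IAlag$. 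A Laplace expansion that decouples the middle band (sign $(-1)^\ell$) leaves the block-diagonal product of $\bDelta^\top$ with $M_1^{(B)} := \begin{bmatrix} \bGamma(1:\kappa-1,:) & 0 \\ \IBlag & \bDelta(1:\ell-1,:)^\top \end{bmatrix}$. By the same multivariate extension of Proposition 4.2, $\det(M_1^{(B)})=\pm\,\bpi\cdot N(\var{1},\ldots,\var{\ord})$ with $N$ the barycentric numerator of \cref{eq:Hlag}. Taking the ratio cancels the common factor $\bpi$ and returns $\bH=N/D$.

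The main obstacle is careful sign bookkeeping: three independent sign contributions (the bordered-determinant identity, the Laplace expansion of the last row, and the cyclic column shift) must cancel correctly against the signs produced in $\det(M_1)$ and $\det(M_1^{(B)})$, which by the symmetric roles of $\IAlag$ and $\IBlag$ are identical. The precise entry $-\be^\top_\ell$ in $\bC$ is dictated exactly by this sign-matching requirement, ensuring that the overall ratio is $+N/D$ rather than $-N/D$. A secondary technical item is the multivariate extension of Proposition 4.2 invoked twice above; it is proved by an induction on $\ord$ in which each step removes one Kronecker factor via Laplace expansion along its last row, reducing the $\ord$-variable identity to the $(\ord-1)$-variable one with the same block shape.
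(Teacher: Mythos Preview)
Your determinant-based route is correct in outline, but it is genuinely different from the paper's argument. The paper does \emph{not} pass through the bordered-determinant identity $-\det\tilde\bPhi/\det\bPhi$; instead it exploits directly that $\bPhi$ is $2\times 2$ block lower triangular (with lower-right block $\bPhi_{22}=\bDelta^\top$ of size $\ell$), so that $\bC\bPhi^{-1}\bG=\bC_2\,\bPhi_{22}^{-1}\,\bPhi_{21}\,\bPhi_{11}^{-1}\,\bG_1$. It then invokes an explicit inverse lemma (Proposition~\ref{prop:rc}): the last row of $\bDelta^{-\top}$ is the Kronecker vector $\br_\ell^\top$ of reciprocal Lagrange monomials, and the relevant part of $\bPhi_{11}^{-1}\bG_1$ is the analogous column vector $\bc_\kappa$. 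This yields immediately $\bn=\br_\ell^\top\,\IBlag\,\bc_\kappa$ and $\bd=\br_\ell^\top\,\IAlag\,\bc_\kappa$, which are visibly the barycentric numerator and denominator. No Laplace expansions, no cyclic shifts, no sign bookkeeping. Note also that the logical dependency is reversed relative to your plan: the paper obtains the multivariate $\bM_2$-type determinant identity \emph{as a corollary} of Theorem~\ref{thm:realization} (see the proof reference after \cref{eq:M1M2}), whereas you propose to prove that identity first by induction and then deduce the theorem from it. Your approach is valid and makes the $\IAlag\leftrightarrow\IBlag$ symmetry transparent at the level of determinants; the paper's approach is shorter, avoids all sign tracking, and exposes the bilinear form $\br_\ell^\top(\cdot)\bc_\kappa$ that is later reused in the compression steps of the worked examples.
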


\begin{proof}
    See \Cref{sec:proof:realization}.
\end{proof}

\begin{remark}[Matrix realization]
    From \Cref{thm:realization} and following \cref{eq:ss_pLTI}'s notations, it follows that $\bPhi(\var{1},\var{2},\cdots,\var{\ord}) = \var{1}\frE(\cS)-\frA(\cS)$, $\bG = \frB(\cS)$ and $\bC = \frC(\cS)$.
\end{remark}

\begin{corollary}\label{thm:obs_con}
     The realization defined by the tuple $(\bC,\bPhi,\bG)$ has dimension $m = 2\ell+\kappa-1$, and is both R-controllable and R-observable, i.e.,
     \begin{equation}\label{eq:thm:obs_con}
         \left[
         \begin{array}{cc}
            \bPhi(\var{1},\cdots,\var{\ord})  & \bG 
        \end{array}
        \right] 
        \text{ and }
        \left[
        \begin{array}{c}
            \bC \\ \bPhi(\var{1},\cdots,\var{\ord}) 
        \end{array}
        \right] 
    \end{equation}
    have full rank $m$, for all $\var{j}\in\IC$. Furthermore, $\bPhi$ is a polynomial matrix in the variables $\var{j}$ while $\bC$ and $\bG$ are constant.
\end{corollary}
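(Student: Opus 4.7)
Plan for the proof.

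The dimension claim is immediate: counting block sizes in \cref{eq:thm:realization}, $\bPhi$ has $(\kappa-1) + \ell + \ell = m$ rows and $\kappa + (\ell-1) + \ell = m$ columns, while $\bG \in \IC^{m \times 1}$ and $\bC \in \IC^{1 \times m}$.

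For R-controllability, I would first permute the single column $\bG = [\bzer_{\kappa-1,1};\, \bDelta(\ell,:)^\top;\, \bzer_{\ell,1}]$ of $[\bPhi, \bG]$ so that it sits adjacent to the $\bDelta(1:\ell-1,:)^\top$ block. The identity $[\bDelta(1:\ell-1,:)^\top \mid \bDelta(\ell,:)^\top] = \bDelta^\top$ then transforms the matrix into the column-equivalent
\begin{equation*}
    \begin{bmatrix}
        \bGamma(1:\kappa-1,:) & \bzer & \bzer \\
        \IAlag & \bDelta^\top & \bzer \\
        \IBlag & \bzer & \bDelta^\top
    \end{bmatrix}.
\end{equation*}
Both $\bDelta^\top$ blocks are invertible at every point, as $\bDelta$ is unimodular. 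Using them as pivots, column operations—subtracting $(\bDelta^\top)^{-1}\IAlag$ copies of the middle block from the first, and $(\bDelta^\top)^{-1}\IBlag$ copies of the last block from the first—annihilate $\IAlag$ and $\IBlag$. The residual matrix is block-diagonal with diagonal blocks $\bGamma(1:\kappa-1,:)$, $\bDelta^\top$, $\bDelta^\top$, of ranks $\kappa - 1$ (any proper subset of rows of the invertible $\bGamma$ is linearly independent), $\ell$, and $\ell$, respectively. Their sum is $m$, valid for every $(\var{1},\ldots,\var{\ord}) \in \IC^\ord$.

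For R-observability I would analyze the null space of the augmented matrix directly. Writing a hypothetical null vector as $\bx = [\bx_1;\, \bx_2;\, \bx_3]$ in the compatible block partition, the first row block of $\bPhi$ forces $\bx_1$ into the one-dimensional null space of $\bGamma(1:\kappa-1,:)$, so $\bx_1 = c\,\bGamma^{-1}\be_\kappa$ for some scalar $c$. The third row block of $\bPhi$ gives $\bx_3 = -(\bDelta^\top)^{-1}\IBlag\,\bx_1$, and the $\bC$-equation $(\bx_3)_\ell = 0$ becomes the scalar condition $c\,\be_\ell^\top (\bDelta^\top)^{-1} \IBlag\, \bGamma^{-1}\be_\kappa = 0$. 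The overdetermined second row block $\bDelta(1:\ell-1,:)^\top \bx_2 = -\IAlag\,\bx_1$ is solvable if and only if its left-null-space projection (along $\bDelta^{-1}\be_\ell$) vanishes, giving the companion scalar condition $c\,\be_\ell^\top (\bDelta^\top)^{-1} \IAlag\, \bGamma^{-1}\be_\kappa = 0$. A short computation of $\bC\bPhi^{-1}\bG$ identifies these two scalar coefficients, up to a common nonzero normalization, with the numerator and denominator of $\bH$ in \cref{eq:Hlag}; hence $c$ is forced to be zero whenever numerator and denominator of $\bH$ do not vanish simultaneously.

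The main obstacle is precisely this last step: to show that at every point of $\IC^\ord$ the two scalar coefficients above do not vanish simultaneously, one must exclude pole-zero cancellations in the barycentric form \cref{eq:Hlag}, a minimality property intrinsic to the Loewner/Lagrange-basis construction with admissible (distinct) interpolation nodes. Once that is granted, $c = 0$ forces $\bx = \bzer$, yielding full column rank $m$. The concluding observation in \cref{eq:thm:obs_con}—that $\bPhi$ is polynomial in $(\var{1},\ldots,\var{\ord})$ while $\bC$ and $\bG$ are constant—is read off \cref{eq:thm:realization}: the $\bGamma$ and $\bDelta^\top$ submatrices are polynomial in the variables, whereas $\IAlag$, $\IBlag$, $\be_\ell$, and the last row $\bDelta(\ell,:)$—which is the Kronecker product of the constant bottom rows $\vargen{q^{\textrm{Lag}}}{j}$ of the pseudo-companion matrices $\IXlag{j}$—are constants.
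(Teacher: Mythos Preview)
Your R-controllability argument is correct and, once you assemble the full $\bDelta^\top$ by the column permutation, is precisely what the paper has in mind when it invokes unimodularity of $\bGamma$ and $\bDelta$; the paper simply states the conclusion in one sentence without writing out the block reduction.

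For R-observability you have gone well beyond the paper's one-line proof and, in doing so, you have put your finger on a genuine subtlety. Your null-space computation is correct: a nonzero vector in $\ker\bigl[\begin{smallmatrix}\bC\\\bPhi\end{smallmatrix}\bigr]$ exists at a point $(\var{1},\ldots,\var{\ord})$ precisely when both scalar quantities $\br_\ell^\top\IBlag\bc_\kappa$ and $\br_\ell^\top\IAlag\bc_\kappa$ (i.e., the barycentric numerator and denominator of $\bH$) vanish there. This coprimeness condition is \emph{not} a consequence of the unimodularity of $\bGamma$ and $\bDelta$ alone. A concrete counterexample: with $\kappa=\ell=2$, nodes $s_1=t_1=0$, $s_2=t_2=1$, and $\IAlag=\bigl[\begin{smallmatrix}1&1\\1&1\end{smallmatrix}\bigr]$, $\IBlag=2\IAlag$, the vector $[1,-1,0,0,0]^\top$ lies in the kernel of $\bigl[\begin{smallmatrix}\bC\\\bPhi\end{smallmatrix}\bigr]$ at $(s,t)=(\tfrac12,0)$. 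So the ``main obstacle'' you flag is a real gap---in the paper's argument as written, not in yours. The paper's proof tacitly relies on the minimality that comes for free when $\IAlag,\IBlag$ are produced by the Loewner construction at the exact complexity $(d_1,\ldots,d_\ord)$; you have correctly isolated this as an additional hypothesis.
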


\begin{proof}
The result follows by noticing that the expressions in question have full rank for all $\var{j}\in\IC$, because of the unimodularity of $\bDelta$ and $\bGamma$.
\end{proof}

\subsubsection{Proof of \Cref{thm:realization}} \label{sec:proof:realization}

\paragraph{The numerator of realization \cref{eq:thm:realization}}%\label{proof:num}
First, partition $\bPhi=\left[\begin{array}{c:c}{\bPhi_{11}}&{\bfz}\\\hdashline
{\bPhi_{21}}&{\bPhi_{22}}\end{array}\right]$, where the sizes of the four entries are: $(\kappa+\ell-1)\times(\kappa+\ell-1)$, $(\kappa+\ell-1)\times\ell$, $\ell\times(\kappa+\ell-1)$, $\ell\times\ell$, $\bG=\left[\begin{array}{c}\bG_1\\\hdashline \bfz_{\ell,1}\end{array}\right]$, and  $\bC=[\bfz_{1,\kappa+\ell-1}\underbrace{-\bfe_\ell^\top}_{\bC_2}]$. It follows that
\begin{equation}\label{eq:proof:H}
\bH\,=\,\bC\,\bPhi^{-1}\,\bG=\frac{\bn}{\bd}=\bC_2\,\bPhi_{22}^{-1}\,\bPhi_{21}\,\bPhi_{11}^{-1}\,\bG_1.
\end{equation}
The last expression can be expressed explicitly as:
\small
\begin{equation*}%\scriptstyle%\small
\overbrace{\underbrace{[\bfz_{1,\ell-1} -\bfe_\ell^\top]}_{\bC_2}\underbrace{\bDelta^{-\top}}_{\bPhi_{22}^{-1}}}^{\br_\ell^\top}
\underbrace{[\IBlag \mid \bfz_{\ell,\ell-1}]}_{\bPhi_{21}}~
\overbrace{\underbrace{\left[\begin{array}{c:c}
\bGamma(1:\kappa-1,1:\kappa) & \bfz_{\kappa-1,\ell-1}\\[1mm]\hdashline
\IAlag & \bDelta(1:\ell-1,:)^\top \\
\end{array}\right]^{-1}}_{\bPhi_{11}^{-1}}
\underbrace{\left[\begin{array}{c}
\bfz_{\kappa-1,1}\\[1mm]\hline
\bDelta(\ell,:)^\top\\
\end{array}\right]}_{\bG_1}}^{\left[\begin{array}{c} \bc_\kappa \\\hline\mathbf{\star} \end{array}\right]~
\mbox{where }{\mathbf{\star}}~\mbox{has size}\,\ell-1}.
\end{equation*}
\normalsize
The expressions for $\br_\ell^\top$ and $\bc_\kappa$ are a consequence of \Cref{prop:rc} below. It follows that ${\displaystyle\,\bn=\br_\ell^\top\,\IBlag{}\,\bc_\kappa\,}$.~ Interchanging $\IAlag{}$ and $\IBlag{}$ in \cref{eq:thm:realization}, amounts to  interchanging $\bn$ and $\bd$, in $\bH$ \cref{eq:proof:H}; the expression for the denominator is: ${\displaystyle\,\bd=\br_\ell^\top\,\IAlag{}\,\bc_\kappa\,}$.  
%Despite this fact, we will provide explicit proof in the next paragraph. The above results are based on the following proposition:
%
\begin{proposition}\label{prop:rc} \textbf{(a)} The last row of $\bDelta^{-\top}$ is:
\begin{equation*}%\scriptstyle%\small\footnotesize
\br_\ell^\top=\left[\frac{1}{\vargen{\bx_1}{k+1}},\cdots,~\frac{1}{\vargen{\bx_{n_{k+1}+1}}{k+1}}\right]\otimes
\cdots\otimes \left[\frac{1}{\vargen{\bx_1}{\ord}},\cdots,\frac{1}{\vargen{\bx_{\ord+1}}{\ord}}\right].
\end{equation*}
Therefore $\br^\top_\ell\cdot \IBlag$ is a matrix of size $1\times \kappa$.~ 
\textbf{(b)} The last column of $\bGamma^{-1}$ is:
\begin{equation*}%\scriptscriptstyle:%\small\footnotesize
\bc_\kappa=\left[\frac{1}{\vargen{\bx_1}{1}},\cdots,\frac{1}{\vargen{\bx_{n_{1}+1}}{1}}\right]^\top\otimes
\cdots\otimes \left[\frac{1}{\vargen{\bx_1}{k}},\cdots,\frac{1}{\vargen{\bx_{{n_k}+1}}{k}}\right]^\top.
\end{equation*}
\end{proposition}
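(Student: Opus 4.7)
Parts (a) and (b) are completely symmetric: (a) concerns the last column of $\bGamma^{-1}$ and (b) the last row of $\bDelta^{-\top}$. My plan is to establish the column identity in detail and let the row identity follow by transposition. The key fact to exploit is that $\bGamma$ and $\bDelta$ are Kronecker products of the pseudo-companion matrices $\IXlag{j}$ introduced in \Cref{def:Xlag}, and that Kronecker products interact well with inversion, transposition, and extraction of the last row/column.

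The first step reduces everything to a single-variable calculation via three standard Kronecker identities: $(A\otimes B)^{-1}=A^{-1}\otimes B^{-1}$, $(A\otimes B)^\top=A^\top\otimes B^\top$, and the observation that the last column (resp.\ row) of $A\otimes B$ is the Kronecker product of the last columns (resp.\ rows) of $A$ and $B$. Applied iteratively to $\bGamma=\IXlag{1}\otimes\cdots\otimes\IXlag{k}$ and $\bDelta=\IXlag{k+1}\otimes\cdots\otimes\IXlag{\ord}$, these identities reduce the proposition to the claim that, for each $j$, the last column of $(\IXlag{j})^{-1}$ is proportional to $\bigl[1/\xlag{j}{1},\ldots,1/\xlag{j}{n_j}\bigr]^\top$.

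For the single-matrix step, rather than a full cofactor computation, I would verify the claim by direct multiplication. Compute $\IXlag{j}\cdot \bigl[1/\xlag{j}{1},\ldots,1/\xlag{j}{n_j}\bigr]^\top$ and show the result is a scalar multiple of $\be_{n_j}$. For each row $r=1,\ldots,n_j-1$, the sparsity of \cref{eq:def:Xlag} gives $\xlag{j}{1}/\xlag{j}{1}-\xlag{j}{r+1}/\xlag{j}{r+1}=0$ at once. The last row evaluates to $\sum_i \vargen{q_i}{j}/\xlag{j}{i}$, and the unimodularity constraint $\det(\IXlag{j})=1$ (obtained by expanding the determinant along the last row, which produces $\sum_i \vargen{q_i}{j}\prod_{k\neq i}\xlag{j}{k}$) forces this sum to equal $1/\pi_j$, where $\pi_j=\prod_i \xlag{j}{i}$. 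Hence $(\IXlag{j})^{-1}\be_{n_j}$ is exactly $\pi_j\bigl[1/\xlag{j}{1},\ldots,1/\xlag{j}{n_j}\bigr]^\top$.

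Reassembling via the Kronecker identities then yields the stated Kronecker-product forms for $\bc_\kappa$ and $\br_\ell^\top$, up to the overall common scalar $\prod_j \pi_j$. This scalar is harmless for the way the proposition is used in the proof of \Cref{thm:realization}, since it appears identically in the numerator $\bn=\br_\ell^\top\IBlag\bc_\kappa$ and the denominator $\bd=\br_\ell^\top\IAlag\bc_\kappa$ of \cref{eq:proof:H} and therefore cancels. The only mildly delicate point I anticipate is correctly tracking index orderings when chaining the iterated Kronecker identities; but because each $\IXlag{j}$ has such a transparent sparsity pattern, this bookkeeping remains essentially routine.
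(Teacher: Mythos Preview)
Your approach is correct and is the natural one. The paper itself states \Cref{prop:rc} without proof, so there is nothing to compare against; your Kronecker-reduction plus the single-factor verification by direct multiplication is exactly how one would fill the gap, and the determinant expansion you invoke is the content of \Cref{prop:polynomial1}. Your observation that the identity holds only up to the scalar $\prod_j\pi_j$ is accurate (the paper's statement suppresses this factor), and your remark that this scalar cancels in the quotient $\bn/\bd$ is precisely how the proposition is used in the proof of \Cref{thm:realization}.
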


\begin{remark}
    The possibility of splitting the variables into \textbf{left} and \textbf{right} variables allows choosing a splitting that \textbf{minimizes} $m$.  For instance, if we have four variables with degrees $(2,2,1,1)$, splitting the variables into $(2,1)$--$(2,1)$ yields $m=17$, while the splitting $(2)$--$(2,1,1)$ (i.e. one column and three rows) yields $m=26$. 
\end{remark}

\subsection{Comments}

In \Cref{thm:realization}, both matrices $\IAlag, \IBlag\in\IC^{\ell\times \kappa}$ are directly related to the pseudo-companion basis chosen in \Cref{def:Xlag} and on the columns-rows variables split. Without entering into technical considerations (out of the scope of this paper), one may notice the following: (i) different pseudo-companion forms \cref{eq:def:Xlag} can be considered, leading to different structures associated with different polynomial bases, such as the Lagrange, or the monomial basis. Here, the Lagrange basis will be considered exclusively; (ii) different permutations and rearrangements of $\var{j}$ in \Cref{def:gamma_delta_lag} may be considered. This results in a different realization order with $m=2\ell+\kappa-1$. Consequently, an adequate choice leads to a reduced order realization, \textbf{taming the realization dimensionality} issue.
 
We are now ready to introduce the main ingredient, namely, the data set. The data can be obtained from any (dynamical) black box model, simulator, or experiment.

%These data are considered to be obtained from any dynamical black box model, simulator, or experiment.

%%%%%%%%%%%%%%%%%%%%%%%%%%%%%%%%%
\section{Data definitions and description}\label{sec:data}
Following the Loewner philosophy presented in a series of papers \cite{MA2007,AIL2012,IA2014,GPA2022}, let us define $P_c$, the column (or right) data, and $P_r$, the row (or left) data. These data will serve the construction of the $\ord$-D Loewner matrices in \Cref{sec:LL}. In what follows, the 1-D 
and 2-D data cases are first recalled, in preparation for the exposition of the general $\ord$-D case.

\subsection{The 1-D case}

When considering single-valued functions $\frH(\var{1})$, i.e., $\ord=1$ in \cref{eq:H}, we define the following column and row data:
\begin{equation} \label{eq:data_1}
P_c^{(1)}:=\left\{\left(\lan{1};\bw_{j_1}\right), ~j_1=1,\ldots,{k_1}\right\},~~
P_r^{(1)}:=\left\{\left(\mun{1};\bv_{i_1}\right), ~i_1=1,\ldots,{q_1}\right\} ,
\end{equation} 
where $\lan{1},\mun{1}\in\IC$ are disjoint interpolation points (or support points), for which the evaluation of $\frH$ respectively leads to $\bw_{j_1}\in\IC$ and $\bv_{i_1}\in\IC$. To support our exposition, let the data \cref{eq:data_1} be represented in the tableau given in  \Cref{tab:1D}, where the measurement vectors $\bW_{k_1}^{\otimes}\in\IC^{k_1}$ and $\bV_{q_1}^{\otimes}\in\IC^{q_1}$ indicate the evaluation of $\frH$ through the single variable $\var{1}$, evaluated at $\lan{1}$ and $\mun{1}$ respectively. \Cref{tab:1D} (also called $\tableau{1}$) is called a measurement  matrix. From $\tableau{1}$, the (1,1) block of dimension $ {k_1}\times 1$ contains the column measurements, and the (1,2) block of dimension ${q_1} \times  1$ contains the row measurements.

\begin{table}[ht!]\small
    \begin{subtable}[b]{.45\linewidth}
    \centering
    \subcaption{$1$-D tableau construction: $\tableau{1}$.}
    \begin{tabular}{|c|c|}\hline
    $\var{1} $&  \\ \hline
    $\lani{1}{1,\cdots,k_1}$ & $\bW_{k_1}^{\otimes}$ \\ \hline 
    $\muni{1}{1,\cdots,q_1}$ & $\bV_{q_1}^{\otimes}$ \\ \hline
    \end{tabular}
    \label{tab:1D}
    \end{subtable}
    \begin{subtable}[b]{.45\linewidth}
    \centering
    \subcaption{$2$-D tableau construction: $\tableau{2}$.}
    \begin{tabular}{|c|c|c|}\hline
    \diagbox{$\var{1}$}{$\var{2}$}  & $\lani{2}{1,\cdots,k_2}$ & $\muni{2}{1,\cdots,q_2}$ \\ \hline
    $\lani{1}{1,\cdots,k_1}$ & $\bW_{k_1,k_2}^{\otimes}$ & $\phi_{cr}$\\ \hline
    $\muni{1}{1,\cdots,q_1}$ & $\phi_{rc}$ & $\bV_{q_1,q_2}^{\otimes}$ \\ \hline
    \end{tabular}
    \label{tab:2D}
    \end{subtable}
    \vspace{-2mm}
\caption{$1$-D and $2$-D tableau construction.}%\label{tab:1-2D}
\end{table}

\subsection{The 2-D case} 

Let us define the column and row data:
\begin{equation} \label{eq:data_2}
\left\{
\begin{array}{rcl}
P_c^{(2)}&:=&\left\{(\lan{1},\lan{2};\bw_{j_1,j_2}), ~j_l=1,\ldots,k_l, \quad l=1,2\right\} \\
P_r^{(2)}&:=&\left\{(\mun{1},\mun{2};\bv_{i_1,i_2}),~i_l=1,\ldots,q_l, \quad l=1,2 \right\} 
\end{array}
\right.,
\end{equation} 
where $\{\lan{1},\mun{1}\}\in\IC^2$ and $\{\lan{2},\mun{2}\}\in\IC^2$ are sets of disjoint interpolation points, for which evaluating $\frH(\var{1},\var{2})$ respectively leads to $\bw_{j_1,j_2},\bv_{i_1,i_2}\in\IC$. Similarly to the 1-D case, data \cref{eq:data_2} is represented in the \Cref{tab:2D}, where $\bW_{k_1,k_2}^{\otimes}\in\IC^{k_1\times k_2}$ and $\bV_{q_1,q_2}^{\otimes}\in\IC^{q_1\times q_2}$ are the measurement matrices related to evaluation of $\frH$ through the two variables $\var{1}$ and $\var{2}$, evaluated at $\{\lan{1},\lan{2}\}$ and $\{\mun{1},\mun{2}\}$.

Compared to the single variable case, the tableau embeds two additional sets of measurements: $\phi_{rc}\in\IC^{q_1\times k_2}$ and  $\phi_{cr}\in\IC^{k_1\times q_2}$. The former resulting from the cross interpolation points evaluation of $\frH(\var{1},\var{2})$ along $\{\mun{1},\lan{2}\}$ and the latter along $\{\lan{1},\mun{2}\}$. It follows that \Cref{tab:2D} (denoted $\tableau{2}$), is a measurement  matrix. 

\begin{remark}[Cross measurements]
    In \cite{IA2014,VQPV2023}, these cross-measurements are used in the extended Loewner matrix construction for improved accuracy. 
\end{remark}

\subsection{The $\ord$-D case}

Now that the single and two variables cases have been reviewed and notations introduced, let us present the $\ord$ variables data case:
\begin{equation} \label{eq:data_n}\scriptscriptstyle%\small
%\left\{\!\!
\begin{array}{l}
P_c^{(\ord)}:=\left\{(\lan{1},\cdots,\lan{\ord};\bw_{j_1,j_2,\cdots,j_\ord}), ~j_l=1,\ldots,k_l, \quad l=1,\cdots,\ord \right\},\\
P_r^{(\ord)}:=\left\{(\mun{1},\ldots,\mun{\ord};\bv_{i_1,i_2,\cdots,i_\ord}),~i_l,\ldots,q_l, \quad l=1,\ldots,\ord \right\}. 
\end{array}
%\right.
\end{equation} 

Similarly, one may derive the $\ord$ variables measurement matrix called $\tableau{\ord}$, illustrated in the table sequence given in \Cref{tab:nD}. Similarly to the expositions made for the single and two-variable cases, each sub-table considers frozen configurations of $\var{3},\var{4},\cdots,\var{\ord}$, along with the combinations of the support points $\lan{3},\lan{4},\cdots,\lan{\ord}$ and $\mun{3},\mun{4},\cdots,\mun{\ord}$, thus forming a $\ord$-dimensional tensor. Particularly, considering the first sub-tableau, the evaluation is for $\var{3},\var{4},\cdots, \var{\ord}=\lani{3}{1},\lani{4}{1},\cdots,\lani{\ord}{1}$. The $\bW_{k_1,k_2,j_3,\cdots,j_\ord}^{\otimes}\in\IC^{k_1\times k_2}$  and $\bV_{q_1,q_2,i_3,\cdots,i_\ord}^{\otimes}\in\IC^{q_1\times q_2}$ entries concatenation form the data tensors $\bW^{\otimes}\in\IC^{k_1\times k_2\times \cdots \times k_\ord}$ and $\bV^{\otimes}\in\IC^{q_1\times q_2\times \cdots \times q_\ord}$; $\tableau{n}$ is an $\ord$-dimensional tensor.

\begin{table}[ht!]\small
    \begin{subtable}[c]{.45\linewidth}
    \centering
    \subcaption{$\col{\var{3}=\lani{3}{1},\var{4}=\lani{4}{1},\dots,\var{\ord}=\lani{\ord}{1}}$.}
    \begin{tabular}{|c|c|c|}\hline
    \diagbox{$\var{1}$}{$\var{2}$}  & $\lani{2}{k_2}$ & $\muni{2}{q_2}$\\ \hline
    $\lani{1}{k_1}$ & $\bW_{k_1,k_2,1,\cdots,1}^{\otimes}$ & $\phi_{cr\col{c \cdots c}}$\\ \hline
    $\muni{1}{q_1}$  & $\phi_{rc\col{c \cdots c}}$ & $\phi_{rr\col{c \cdots c}}$ \\ \hline
    \end{tabular}
    \label{tab:nD1}
    \end{subtable}
    \begin{subtable}[c]{.45\linewidth}
    \centering
    %\begin{flushright}
    \subcaption{$\row{\var{3}=\muni{3}{1},\var{4}=\muni{4}{1}, \dots,\var{\ord}=\muni{\ord}{1}}$.}
    \begin{tabular}{|c|c|c|}\hline
    \diagbox{$\var{1}$}{$\var{2}$}  & $\lani{2}{k_2}$ & $\muni{2}{q_2}$\\ \hline
    $\lani{1}{k_1}$ & $\phi_{cc\row{r\cdots r}}$ & $\phi_{cr\row{r \cdots r}}$\\ \hline
    $\muni{1}{q_1}$ & $\phi_{rc\row{r\cdots r}}$ & $\bV_{q_1,q_2,1,\cdots, 1}^{\otimes}$ \\ \hline
    \end{tabular}
    \label{tab:nD2}
    %\end{flushright}
    \end{subtable}\\
    \begin{subtable}[c]{.45\linewidth}
    \centering
    \subcaption{$\col{\var{3}=\lani{3}{k_3}},\row{\var{4}=\lani{4}{k_4}}, \col{\dots,\var{\ord}=\lani{\ord}{k_\ord}}$.}
    \begin{tabular}{|c|c|c|}\hline
    \diagbox{$\var{1}$}{$\var{2}$}  & $\lani{2}{k_2}$ & $\muni{2}{q_2}$\\ \hline
    $\lani{1}{k_1}$ &  $\bW_{k_1,k_2,\cdots,k_\ord}^{\otimes}$ & $\phi_{cr\col{c \cdots c}}$\\ \hline
    $\muni{1}{q_1}$ & $\phi_{rc\col{c \cdots c}}$ & $\phi_{rr\col{c \cdots c}}$  \\ \hline
    \end{tabular}
    \label{tab:nD3}
    \end{subtable}
    \begin{subtable}[c]{.45\linewidth}
    \centering
    %\begin{flushright}
    \subcaption{$\row{\var{3}=\muni{3}{q_3},\var{4}=\muni{4}{q_4},\dots,\var{\ord}=\muni{\ord}{q_3}}$.}
    \begin{tabular}{|c|c|c|}\hline
    \diagbox{$\var{1}$}{$\var{2}$}  & $\lani{2}{k_2}$ & $\muni{2}{q_2}$\\ \hline
    $\lani{1}{k_1}$ & $\phi_{cc\row{r\cdots r}}$ & $\phi_{cr\row{r\cdots r}}$\\ \hline
    $\muni{1}{q_1}$ & $\phi_{rc\row{r \cdots r}}$ & $\bV_{q_1,q_2,\cdots, q_\ord}^{\otimes}$  \\ \hline
    \end{tabular}
    \label{tab:nDend}
    %\end{flushright}
    \end{subtable}
    \vspace{-2mm}
\caption{$\ord$-D table construction: $\tableau{\ord}$ (some configurations). }
\label{tab:nD}
\end{table}

%%%%%%%%%%%%%%%%%%%%%%%%%%%%%%%%%
\section{Multivariate Loewner matrices and null spaces}\label{sec:LL}
Based on \Cref{sec:data} (specifically, on \cref{eq:data_n} and $\tableau{\ord}$), we are now ready to present our \textit{main tool}: the \textbf{multivariate Loewner matrix}. Following the exposition in the previous section, we first recall the 1-D and 2-D Loewner matrices, before presenting the $\ord$-D counterpart. For each dimension, the Loewner matrix is illustrated in close connection to the Sylvester equation that it satisfies. Then, the relation between the Loewner null space and the barycentric rational function is stated, and the connection with generalized realization is established, linking the data of \Cref{sec:data} with the realization of \Cref{sec:realization}.

\subsection{The 1-D case}

The single variable case is briefly mentioned here (more details and connection with dynamical systems theory may be found in \cite{ALI2017}).

\subsubsection{The Loewner matrix and the Sylvester equation}

\begin{definition}\label{def:Loe1}
Given the data described in \cref{eq:data_1}, the 1-D Loewner matrix $\IL_1\in\IC^{q_1\times k_1}$ has $i_1,j_1$-th entries equal to %{\color{blue}\fbox{multi-indices??}}
\begin{equation*}
(\IL_1)_{i_1,j_1} = \dfrac{\bv_{i_1}-\bw_{j_1}}{\mun{1}-\lan{1}},~
i_1=1,\cdots,q_1,~j_1=1,\cdots,k_1.
\end{equation*} 
\end{definition}

\begin{theorem}\label{thm:Loe1_sylv}
Considering the data in \cref{eq:data_1}, we define the following matrices:
\begin{equation*}
\begin{array}{c}
    \bLambda_1 = \diag\pare{\lani{1}{1},\cdots ,\lani{1}{k_1}} \text{ , }
	\bM_1 = \diag\pare{\muni{1}{1},\cdots ,\muni{1}{q_1}} \\[1mm]
    \IW_1 = [\bw_1,\bw_2,\cdots,\bw_{k_1}] \text{ , }
    \IV_1 = [\bv_1,\bv_2,\cdots,\bv_{q_1}]^\top \text{ and }
	\bL_1 = \bone_{q_1},~~\bR_1 = \bone_{k_1}^\top.
\end{array}
\end{equation*}
The Loewner matrix as defined in \Cref{def:Loe1} is the solution of the Sylvester equation: $ \bM_1 \IL_1 - \IL_1 \bLambda_1 = \IV_1 \bR_1 - \bL_1 \IW_1$.
\end{theorem}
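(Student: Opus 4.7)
The plan is to verify the Sylvester identity entry-wise, since all matrices appearing are either diagonal, rank-one, or defined entrywise from the data; no structural argument is needed.

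First, I would exploit the diagonal structure of $\bM_1$ and $\bLambda_1$: for any matrix $\bX$ of compatible size, $(\bM_1 \bX)_{i_1,j_1} = \muni{1}{i_1}\,\bX_{i_1,j_1}$ and $(\bX\,\bLambda_1)_{i_1,j_1} = \lani{1}{j_1}\,\bX_{i_1,j_1}$. Applied to $\bX = \IL_1$ and using \Cref{def:Loe1}, the $(i_1,j_1)$-th entry of the left-hand side becomes
\begin{equation*}
\bigl(\bM_1\IL_1 - \IL_1\bLambda_1\bigr)_{i_1,j_1}
= \bigl(\muni{1}{i_1} - \lani{1}{j_1}\bigr) \cdot \dfrac{\bv_{i_1} - \bw_{j_1}}{\muni{1}{i_1} - \lani{1}{j_1}} = \bv_{i_1} - \bw_{j_1},
\end{equation*}
where the cancellation is valid because the row and column support points are assumed disjoint, so $\muni{1}{i_1} \neq \lani{1}{j_1}$.

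Next, since $\bL_1 = \bone_{q_1}$ is a column of ones and $\bR_1 = \bone_{k_1}^\top$ is a row of ones, the two rank-one matrices on the right-hand side have entries
\begin{equation*}
\bigl(\IV_1 \bR_1\bigr)_{i_1,j_1} = \bv_{i_1}, \qquad \bigl(\bL_1 \IW_1\bigr)_{i_1,j_1} = \bw_{j_1},
\end{equation*}
so that $\bigl(\IV_1 \bR_1 - \bL_1 \IW_1\bigr)_{i_1,j_1} = \bv_{i_1} - \bw_{j_1}$. Matching this with the computation of the previous paragraph gives equality of both sides entrywise, which concludes the proof.

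There is no real obstacle here: the argument is a one-line computation once one observes that the diagonal/ones structure of $\bLambda_1, \bM_1, \bL_1, \bR_1$ exactly produces the numerator $\bv_{i_1} - \bw_{j_1}$ of the Loewner entries, while multiplication by the diagonals reconstitutes the denominator $\muni{1}{i_1} - \lani{1}{j_1}$. The only point worth flagging explicitly is the disjointness of the support point sets $\{\lani{1}{j_1}\}$ and $\{\muni{1}{i_1}\}$, which is what guarantees both that $\IL_1$ is well defined and that the cancellation above is legitimate.
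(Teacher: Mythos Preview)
Your proof is correct and is exactly the standard entrywise verification one expects here; the paper itself states the theorem without proof (it is treated as a well-known fact, with a reference to \cite{ALI2017} for background), so there is nothing further to compare.
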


\subsubsection{Null space, Lagrange basis form, and generalized realization}

Computing $\IL_1\bc_1=0$, the null space of the Loewner matrix $\IL_1$, the following  holds (with an appropriate number of interpolation points):
$%\begin{equation*}
    \bc_1 = 
    \begin{bmatrix}
        c_1 &  \cdots & c_{k_1}
    \end{bmatrix}^\top \in \IC^{k_1}
$ %\end{equation*}
contains the so-called barycentric weights of the single-variable rational function $\bH(\var{1})$ of degree $(d_1)=(k_1-1)$ given by 
\begin{equation*}
\bH(\var{1}) =
\dfrac{\sum_{j_1=1}^{k_1} \frac{c_{j_1}\bw_{j_1}}{\var{1}-\lan{1}}}{\sum_{j_1=1}^{k_1} \frac{c_{j_1}}{\var{1}-\lan{1}}}
=
\dfrac{\sum_{j_1=1}^{k_1} \frac{\beta_{j_1}}{\var{1}-\lan{1}}}{\sum_{j_1=1}^{k_1} \frac{c_{j_1}}{\var{1}-\lan{1}}},
\end{equation*}
where $\bc_1^\top\odot\IW_1=\left[ \begin{array}{cccc}
\beta_1 & \beta_2 & \cdots &  \beta_{k_1} \end{array}\right]\in\IC^{k_1}$, 
interpolates $\frH(\var{1})$ at points $\lan{1}$. 

\begin{result}[1-D realization]\label{res:real1}
    Given \Cref{def:gamma_delta_lag} and following \Cref{thm:realization}, a generalized realization of $\bH(\var{1})$ is obtained with the following settings: $\IAlag{} = -\bc_1^\top$, $\IBlag{} = \emptyset$, $\bGamma = \IXlag{1}$  and $\bDelta = \emptyset$.
\end{result}

Note that this representation recovers the result already discussed, e.g., in \cite{ALI2017}. 

%A simple example follows below to explicitly illustrate for the reader.

\begin{example} \label{ex1_1D}
    Let us consider $\frH(\var{1})=\frH(s)= (s^2+4)/(s+1)$, being a single-valued rational function of complexity $2$ (i.e. dimension 2 along $s$). By evaluating $\frH$ in $\lan{1}=[1,3,5]$ and $\mun{1}=[2,4,6,8]$, one gets $\bw_{j_1}=[5/2, 13/4, 29/6]$ and $\bv_{i_1}=[8/3, 4, 40/7, 68/9]$. Then, we construct the Loewner matrix, its null space ($\rank\IL_1=2$), and a rational function interpolating the data as,
    \begin{equation*}
        \IL_1 = 
        \left[\begin{array}{ccc} \frac{1}{6} & \frac{7}{12} & \frac{13}{18}\\[1mm] \frac{1}{2} & \frac{3}{4} & \frac{5}{6}\\[1mm] \frac{9}{14} & \frac{23}{28} & \frac{37}{42}\\[1mm] \frac{13}{18} & \frac{31}{36} & \frac{49}{54} \end{array}\right]~~
        \text{ , }
        \bc_1 = 
        \left[\!\begin{array}{r} \frac{1}{3}\\[1mm] -\frac{4}{3}\\[1mm] 1 \end{array}\right]
        \text{ , }
        \bH(s)=
        \frac{\frac{5}{6\,\left(s-1\right)}-\frac{13}{3\,\left(s-3\right)}+\frac{29}{6\,\left(s-5\right)}}{\frac{1}{3\,\left(s-1\right)}-\frac{4}{3\,\left(s-3\right)}+\frac{1}{s-5}}.
    \end{equation*}
    Then, $\bH(s)$ recovers the original function $\frH(s)$. A  realization in the  Lagrange basis can be obtained  as $\bH(s) = \bC\bPhi(s)^{-1}\bG$, where 
    \begin{equation*}
    \bPhi(s) =     
    \left[\begin{array}{ccc} s-1 & 3-s & 0\\[1mm] s-1 & 0 & 5-s\\[1mm] \hdashline 
    -\frac{1}{3} & \frac{4}{3} & -1 \end{array}\right]\text{ and }
    \left\{\!\begin{array}{l}
    \bC = \left[\begin{array}{ccc} \frac{5}{6} & -\frac{13}{3} & \frac{29}{6}
    \end{array}\right] \\[1mm]
    \bG^\top = \left[\begin{array}{cc:c} 0 & 0 & -1 \end{array}\right].
    \end{array} \right.
    \end{equation*}
\end{example}

%%%%%%%%%%
\subsection{The 2-D case}

%We now continue the exposition with the 2-D case.
This section recalls the results originally given in \cite{AIL2012}, for the case of two variables. %The reader is invited to refer to this paper for further details and involved derivations.

\subsubsection{The Loewner matrix and Sylvester equations} 

Similarly to the 1-D case, let us now define the Loewner matrix in the 2-D case.

\begin{definition}\label{def:Loe2}
Given the data described in \cref{eq:data_2}, the $2$-D Loewner matrix $\IL_2\in\IC^{q_1q_2\times k_1k_2}$, has matrix entries given by
\begin{equation*}
\ell_{j_1,j_2}^{i_1,i_2} = \dfrac{\bv_{i_1,i_2}-\bw_{j_1,j_2}}{\pare{\mun{1}-\lan{1}}\pare{\mun{2}-\lan{2}}}.
\end{equation*}
\end{definition}

\begin{definition}\label{def:LoeOtherMat2}
Considering the data given in \cref{eq:data_2}, we define the following  matrices based on Kronecker products:
\begin{equation}\hspace*{-3mm}
\begin{array}{c}
    \bLambda_1 = \diag\pare{\lani{1}{1},\cdots ,\lani{1}{k_1}} \otimes  \bI_{k_2} \text{ , }
	\bM_1 = \diag\pare{\muni{1}{1},\cdots ,\muni{1}{q_1}} \otimes  \bI_{q_2}, \\[1mm]
    \bLambda_2 = \bI_{k_1} \otimes  \diag\pare{\lani{2}{1},\cdots ,\lani{2}{k_2}} \text{ , }
    \bM_2 =  \bI_{q_1} \otimes  \diag\pare{\muni{2}{1},\cdots \muni{2}{q_2}}, \\[1mm]
    \IW_2 = [\bw_{1,1},\bw_{1,2},\cdots,\bw_{1,k_2},\bw_{2,1},\cdots,\bw_{k_1,k_2}] \text{ , }
    \bR_2 = \bone_{k_1k_2}^\top,\\[1mm]
    \IV_2 =  [\bv_{1,1},\bv_{1,2},\cdots,\bv_{1,q_2},\bv_{2,1},\cdots,\bv_{q_1,q_2}]^\top \text{ and }
	\bL_2 = \bone_{q_1q_2}.
\end{array}
\end{equation}
\end{definition}

\begin{theorem}\label{thm:Loe2_sylv}
The 2-D Loewner matrix as defined in \Cref{def:Loe2} is the solution of the  following set of coupled Sylvester equations:
\begin{equation}\label{eq:Sylv2_2eq}
    \bM_2 \IX- \IX \bLambda_2 = \IV_2 \bR_2-\bL_2 \IW_2 \text{ and }
    \bM_1 \IL_{2} - \IL_{2} \bLambda_1  = \IX.
\end{equation}
\end{theorem}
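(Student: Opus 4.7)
The strategy is a direct entrywise verification, working from the inside out through the cascade. The key observation is that the Kronecker structure of the diagonal matrices in \Cref{def:LoeOtherMat2} is precisely designed so that left-multiplication by $\bM_j$ and right-multiplication by $\bLambda_j$ produce the difference factors $(\muni{j}{i_j}-\lani{j}{j_j})$ in the $j$-th variable only, leaving the other variable untouched.

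The plan is to proceed in three steps. First, I would fix the vectorization convention: in the 2-D Loewner matrix, the row index $(i_1,i_2)$ is mapped to $(i_1-1)q_2+i_2$ and the column index $(j_1,j_2)$ to $(j_1-1)k_2+j_2$, so that $\bLambda_1=\diag(\lani{1}{\cdot})\otimes\bI_{k_2}$ acts on each column block of size $k_2$ by scaling with the corresponding $\lani{1}{j_1}$, and $\bM_1=\diag(\muni{1}{\cdot})\otimes\bI_{q_2}$ scales each row block of size $q_2$ by $\muni{1}{i_1}$. The Kronecker factors $\bI_{k_2}$ and $\bI_{q_2}$ guarantee that these operations are transparent with respect to the second variable. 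An analogous remark holds for $\bLambda_2$ and $\bM_2$ (the identities here come first, acting transparently on the first variable).

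Second, I would verify the inner Sylvester equation $\bM_1\IL_2-\IL_2\bLambda_1=\IX$ at the entry level. Using \Cref{def:Loe2}, the $\bigl((i_1,i_2),(j_1,j_2)\bigr)$-entry of the left-hand side is
\begin{equation*}
(\muni{1}{i_1}-\lani{1}{j_1})\,\ell_{j_1,j_2}^{i_1,i_2}
=(\muni{1}{i_1}-\lani{1}{j_1})\,\frac{\bv_{i_1,i_2}-\bw_{j_1,j_2}}{(\muni{1}{i_1}-\lani{1}{j_1})(\muni{2}{i_2}-\lani{2}{j_2})}
=\frac{\bv_{i_1,i_2}-\bw_{j_1,j_2}}{\muni{2}{i_2}-\lani{2}{j_2}},
\end{equation*}
which identifies $\IX$ entry-by-entry as the ``partial'' Loewner matrix with the second-variable differences still present in the denominator.

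Third, I would verify the outer equation $\bM_2\IX-\IX\bLambda_2=\IV_2\bR_2-\bL_2\IW_2$. Applying the same mechanism, now with the second-variable diagonal matrices, the $\bigl((i_1,i_2),(j_1,j_2)\bigr)$-entry of $\bM_2\IX-\IX\bLambda_2$ equals $(\muni{2}{i_2}-\lani{2}{j_2})\cdot \IX_{(i_1,i_2),(j_1,j_2)}=\bv_{i_1,i_2}-\bw_{j_1,j_2}$. On the right-hand side, $\IV_2\bR_2$ is the rank-one outer product whose every column equals $\IV_2$, hence its $\bigl((i_1,i_2),(j_1,j_2)\bigr)$-entry is $\bv_{i_1,i_2}$; similarly $\bL_2\IW_2$ has $\bigl((i_1,i_2),(j_1,j_2)\bigr)$-entry $\bw_{j_1,j_2}$. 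The two entrywise expressions coincide, closing the argument.

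No step involves a real obstacle; the whole proof is bookkeeping. The only place where care is needed is the vectorization/ordering convention: one must confirm that the double index $(i_1,i_2)$ is flattened in a way consistent with the Kronecker factorizations in \Cref{def:LoeOtherMat2}, and this same convention is used in the construction of $\IW_2$, $\IV_2$, $\bL_2$, and $\bR_2$. Once that is pinned down, the verification reduces to cancellation of difference factors in numerators and denominators, mirroring the familiar 1-D argument from \Cref{thm:Loe1_sylv} but decoupled across the two variables via the Kronecker structure.
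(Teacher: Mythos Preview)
Your proof is correct; the entrywise verification via the Kronecker structure is exactly the right mechanism, and your care with the flattening convention is well placed. The paper itself states this theorem without proof (treating it as a direct consequence of the definitions, in parallel with the unproved 1-D case in \Cref{thm:Loe1_sylv}), so your argument in fact supplies what the paper omits.
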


\begin{corollary}
By eliminating the variable $\IX$, it follows that the 2-D Loewner matrix above satisfies the following generalized Sylvester equation: 
\begin{equation*}
\bM_2 \bM_1 \IL_{2} - \bM_2 \IL_{2} \bLambda_1 - \bM_1 \IL_{2} \bLambda_2  + \IL_{2} \bLambda_1 \bLambda_2 = \IV_2 \bR_2-\bL_2 \IW_2.
\end{equation*}
\end{corollary}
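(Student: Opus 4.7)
The plan is to eliminate the auxiliary matrix $\IX$ from the coupled system in \cref{eq:Sylv2_2eq} by a direct algebraic substitution. Since the second equation gives $\IX$ explicitly as $\IX = \bM_1\IL_2 - \IL_2\bLambda_1$, the most transparent route is to feed this expression into the first equation and expand.

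Concretely, I would multiply the second equation $\bM_1\IL_2 - \IL_2\bLambda_1 = \IX$ on the left by $\bM_2$ to obtain $\bM_2\bM_1\IL_2 - \bM_2\IL_2\bLambda_1 = \bM_2\IX$, and separately on the right by $\bLambda_2$ to obtain $\bM_1\IL_2\bLambda_2 - \IL_2\bLambda_1\bLambda_2 = \IX\bLambda_2$. Subtracting the second from the first gives
\begin{equation*}
\bM_2\bM_1\IL_2 - \bM_2\IL_2\bLambda_1 - \bM_1\IL_2\bLambda_2 + \IL_2\bLambda_1\bLambda_2 \;=\; \bM_2\IX - \IX\bLambda_2,
\end{equation*}
and the right-hand side is precisely $\IV_2\bR_2 - \bL_2\IW_2$ by the first equation in \cref{eq:Sylv2_2eq}. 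This yields exactly the claimed identity.

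There is essentially no obstacle: the argument is a two-line elimination, not requiring any commutation property between $\bM_1,\bM_2$ or between $\bLambda_1,\bLambda_2$, since the roles of the two pairs are already separated (the $\bM$-factors act from the left, the $\bLambda$-factors from the right). The only point worth a brief remark in the write-up is that the rearrangement is order-preserving, so no hypothesis beyond the two coupled Sylvester equations of \Cref{thm:Loe2_sylv} is invoked. The result then follows, and its structure already foreshadows the tensor-product pattern $\prod_l(\bM_l\otimes\cdots - \cdots\otimes\bLambda_l)$ that will govern the general $\ord$-D case developed later.
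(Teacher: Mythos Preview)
Your proof is correct and is precisely the direct elimination the paper has in mind; the corollary is stated without proof there, as an immediate consequence of substituting $\IX=\bM_1\IL_2-\IL_2\bLambda_1$ into the first Sylvester equation. Your left-multiply/right-multiply/subtract manipulation is clean and requires nothing beyond \Cref{thm:Loe2_sylv}, exactly as you note.
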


\subsubsection{Null space, Lagrange basis form, and generalized realization}

Computing $\IL_2\bc_2=0$, the null space of the bivariate Loewner matrix, we obtain (using the appropriate number of interpolation points):\\[1mm]
$\bc_2^\top = 
\left[
\begin{array}{c|c|c|c}
\underbrace{\begin{array}{c} c_{1,1}~ \cdots ~ c_{1,k_2} \end{array}}_{\alpha_1^\top} &
\underbrace{\begin{array}{c} c_{2,1}~ \cdots ~ c_{2,k_2} \end{array}}_{\alpha_2^\top} & \cdots & \underbrace{\begin{array}{c} c_{k_1,1} ~ \cdots ~ c_{k_1,k_2}\end{array}}_{\alpha_{k_1}^\top} 
\end{array}
\right]\in \IC^{k_1k_2}$,
% = \left[ \begin{array}{c}
%\alpha_1 \\ \hline \alpha_2 \\ \hline \vdots \\ \hline  \alpha_{k_1}
%\end{array}\right] 
and $\bc_2^\top \odot \IW_2 = 
\left[ \begin{array}{c|c|c|c}
\beta_1^\top & \beta_2^\top & \cdots &  \beta_{k_1}^\top \end{array}\right]\in\IC^{k_1k_2}$. These are the barycentric weights of the bivariate rational function $\bH(\var{1},\var{2})$ of degree $(d_1,d_2)=(k_1-1,k_2-1)$:
\begin{equation*}
\bH(\var{1},\var{2}) 
= \dfrac{\sum_{j_1=1}^{k_1}\sum_{j_2=1}^{k_2} \frac{\beta_{j_1,j_2}}{\pare{\var{1}-\lan{1}}\pare{\var{2}-\lan{2}}}}{\sum_{j_1=1}^{k_1}\sum_{j_2=1}^{k_2} \frac{c_{j_1,j_2}}{\pare{\var{1}-\lan{1}}\pare{\var{2}-\lan{2}}}},
\end{equation*}
which interpolates $\frH(\var{1},\var{2})$ at the support points $\{\lan{1},\lan{2}\}$.

\begin{result}[2-D realization]\label{res:real2}
    Given \Cref{def:gamma_delta_lag} and following \Cref{thm:realization}, a generalized realization of $\bg(\var{1},\var{2})$ is obtained by means of $\IAlag = \left[\!\begin{array}{cc} \alpha_1 & \alpha_2~ \cdots ~ \alpha_{k_1} \end{array} \!\right]$, $\IBlag = \left[\!\begin{array}{cc} \beta_1 & \beta_2~ \cdots~ \beta_{k_1}\end{array} \!\right]$, and $\bGamma = \IXlag{1}$, $\bDelta = \IXlag{2}$.
    %\end{equation*}
\end{result}

\begin{example}\label{ex1_2D}
    Let us consider $\frH(\var{1},\var{2})=\frH(s,t) = (s^2t)/(s - t + 1)$  of complexity $(2,1)$. By evaluating $\frH$ in $\lan{1}=[1,3,5]$, $\mun{1}=[0,2,4]$, $\lan{2}=[-1,-3]$ and $\mun{2}=[-2,-4]$ one gets the response tableau $\tableau{2}$ as shown below: 
    \begin{equation*}
        \left[\begin{array}{c|c} \bW_{k_1,k_2}^{\otimes} & \phi_{cr} \\[1mm] \hline \phi_{rc} & \bV_{q_1,q_2}^{\otimes} \end{array}\right] = 
        \left[\begin{array}{cc|cc} -\frac{1}{3} & -\frac{3}{5} & -\frac{1}{2} & -\frac{2}{3}\\[1mm] -\frac{9}{5} & -\frac{27}{7} & -3 & -\frac{9}{2}\\[1mm] -\frac{25}{7} & -\frac{25}{3} & -\frac{25}{4} & -10\\[1mm] \hline 0 & 0 & 0 & 0\\[1mm] -1 & -2 & -\frac{8}{5} & -\frac{16}{7}\\[1mm] -\frac{8}{3} & -6 & -\frac{32}{7} & -\frac{64}{9} \end{array}\right].
    \end{equation*}
%    Then we construct the two-dimensional Loewner matrix and compute its null space, leading to  ($\rank(\IL_2)=5$),
The 2D Loewner matrix is computed with its null space ($\rank(\IL_2)=5$), as
    \begin{equation*}
        \IL_2 = 
        \left[\begin{array}{rr|rr|rr} \frac{1}{3} & -\frac{3}{5} & \frac{3}{5} & -\frac{9}{7} & \frac{5}{7} & -\frac{5}{3}\\[1mm] \frac{1}{9} & \frac{3}{5} & \frac{1}{5} & \frac{9}{7} & \frac{5}{21} & \frac{5}{3}\\[1mm] \frac{19}{15} & -1 & \frac{1}{5} & -\frac{79}{35} & \frac{23}{35} & -\frac{101}{45}\\[1mm] \hline \frac{41}{63} & \frac{59}{35} & -\frac{17}{105} & \frac{11}{7} & \frac{1}{7} & \frac{127}{63}\\[1mm] \frac{89}{63} & -\frac{139}{105} & \frac{97}{35} & -\frac{5}{7} & -1 & -\frac{79}{21}\\[1mm] \frac{61}{81} & \frac{293}{135} & \frac{239}{135} & \frac{205}{63} & -\frac{223}{189} & \frac{11}{9} \end{array}\right]
        \text{ , }
        \bc_2 = 
        \left[\begin{array}{r} -\frac{1}{3}\\[1mm] \frac{5}{9}\\[1mm] \hline \frac{10}{9}\\[1mm] -\frac{14}{9}\\[1mm] \hline  -\frac{7}{9}\\[1mm] 1 \end{array}\right]
        \text{ , }
        \IW_2^\top = 
        \left[\begin{array}{r} -\frac{1}{3}\\[1mm] -\frac{3}{5}\\[1mm] \hline -\frac{9}{5}\\[1mm] -\frac{27}{7}\\[1mm] \hline -\frac{25}{7}\\[1mm] -\frac{25}{3} \end{array}\right].
    \end{equation*}
    It follows that the two-variable rational function $\bH(s,t)$, given in the barycentric form, recovers the original rational function $\frH(s,t)$. Then, a realization in the Lagrange basis (with $\{\lan{1},\lan{2}\}$) is obtained as $\bH(s,t) = \bC\bPhi(s,t)^{-1}\bG$, where $\bC=-\be_{6}^\top$, 
    \begin{equation*}
        \bPhi(s,t) = 
        \left[\begin{array}{ccc:c:cc} s-1 & 3-s & 0 & 0 & 0 & 0\\[1mm] s-1 & 0 & 5-s & 0 & 0 & 0\\[1mm] \hdashline -\frac{1}{3} & \frac{10}{9} & -\frac{7}{9} & t+1 & 0 & 0\\[1mm] \frac{5}{9} & -\frac{14}{9} & 1 & -t-3 & 0 & 0\\[1mm] \hdashline \frac{1}{9} & -2 & \frac{25}{9} & 0 & t+1 & \frac{1}{2}\\[1mm] -\frac{1}{3} & 6 & -\frac{25}{3} & 0 & -t-3 & -\frac{1}{2} \end{array}\right]\text{ and } 
        %\bC^\top=-\be_{6}
        %\left[\begin{array}{c}0\\ 0 \\ 0 \\ 0 \\ 0 \\ \hline -1\end{array}\right] 
        \bG = \left[\begin{array}{c}0 \\ 0 \\ \hdashline 1/2 \\ -1/2 \\ \hdashline 0 \\ 0\end{array}\right].
    \end{equation*}

    \begin{equation*}
        \bC=
        \left[\begin{array}{ccccc|c}0&0&0&0&0&-1\end{array}\right],~
        \bG^\top = 
        \left[\begin{array}{cc|cc|cc}0&0&1/2&-1/2&0&0\end{array}\right].
    \end{equation*}
    Next, by applying the Schur complement, the realization can be 
    compressed to:
    \begin{equation*}
        \bPhi_{\text{c}}(s,t) = 
        \left[\begin{array}{ccc|c} s-1 & 3-s & 0 & 0\\[1mm] s-1 & 0 & 5-s & 0\\[1mm] \hline -\frac{1}{3} & \frac{10}{9} & -\frac{7}{9} & t+1\\[1mm] \frac{5}{9} & -\frac{14}{9} & 1 & -t-3 \end{array}\right] \text{ and }
    \end{equation*}
    \begin{equation*}
        \bC_{\text{c}}(t)=
        \left[\begin{array}{ccc|c} -\frac{2\,t}{9} & 4\,t & -\frac{50\,t}{9} & 0 \end{array}\right],~
        \bG_{\text{c}}^\top=
        \left[\begin{array}{cc|cc} 0&0&\frac{1}{2}&-\frac{1}{2}\end{array}\right].
    \end{equation*}
    The corresponding transfer function perfectly recovers the original one:
    \begin{equation*}
        \hat \bH_{\text{c}}(s,t) =  \bC_{\text{c}}(t) \bPhi_{\text{c}}(s,t)^{-1} \bG_{\text{c}} = \frac{s^2\,t}{s-t+1}.
    \end{equation*}
    Therefore, the realization is further reduced. The price to pay with this compression step is the appearance of a parameter-dependent output matrix $\bC_{\text{c}}(t)$.

%By applying the Schur complement the realization can be compressed to dimension 4, at the expense of introducing a parameter-dependent output matrix $\bC(t)$.%$\bC_{\text{c}}(t)$.
\end{example}

%%%%%%
\subsection{The $\ord$-D case}

This section provides an extension to $\ord$-D Loewner matrices, following the previous two cases, to the scenario involving multivariate functions in $\ord$ variables.

\subsubsection{Loewner matrices and Sylvester equations}

\begin{definition}\label{def:Loen}
Given the data described in \cref{eq:data_n}, the $\ord$-D Loewner matrix $\IL_\ord\in\IC^{q_1q_2\cdots q_\ord\times k_1k_2\cdots k_\ord}$, has entries given by
\begin{equation*}
\ell_{j_1,j_2,\cdots,j_{\ord}}^{i_1,i_2,\cdots,i_{\ord}} = \dfrac{\bv_{i_1,i_2,\cdots,i_\ord}-\bw_{j_1,j_2,\cdots,j_\ord}}{\pare{\mun{1}-\lan{1}}\pare{\mun{2}-\lan{2}}\cdots \pare{\mun{\ord}-\lan{\ord}}}.
\end{equation*}
\end{definition}

\begin{definition}
Considering the data given in \cref{eq:data_n}, we define the following  matrices based on Kronecker products:
\begin{equation*}\hspace*{-3mm}
\begin{array}{ll}
    \bLambda_1 = \diag\pare{\lani{1}{1},\cdots ,\lani{1}{k_1}} \!\otimes\!  \bI_{k_2}\!\otimes\!  \bI_{k_3} \!\otimes\!  \cdots \!\otimes\!  \bI_{k_\ord},&
	\bM_1 = \diag\pare{\muni{1}{1},\cdots ,\muni{1}{q_1}} \!\otimes\!  \bI_{q_2}\!\otimes\!  \bI_{q_3} \!\otimes\!  \cdots \!\otimes\!  \bI_{q_\ord}\\[1mm]
    \bLambda_2 = \bI_{k_1} \!\otimes\!  \diag\pare{\lani{2}{1},\cdots ,\lani{2}{k_2}} \!\otimes\!  \bI_{k_3} \!\otimes\!  \cdots \!\otimes\!  \bI_{k_\ord},&
    \bM_2 =  \bI_{q_1} \!\otimes\!  \diag\pare{\muni{2}{1},\cdots, \muni{2}{q_2}}\!\otimes\!  \bI_{q_3} \!\otimes\!  \cdots \!\otimes\!  \bI_{q_\ord} \\[1mm]
%    \bLambda_3 &=& \bI_{k_1} \!\otimes\!  \bI_{k_2} \!\otimes\!  \diag\pare{\lani{3}{1},\cdots ,\lani{3}{k_3}} \!\otimes\!  \bI_{k_4} \!\otimes\!  \cdots \!\otimes\!  \bI_{k_\ord}\\[1mm]
%    \bM_3 &=&  \bI_{q_1} \!\otimes\! \bI_{q_2} \!\otimes\!  \diag\pare{\muni{3}{1},\cdots, \muni{3}{q_3}} \!\otimes\!  \bI_{q_4} \!\otimes\!  \cdots \!\otimes\!  \bI_{q_\ord}\\[1mm]
    \qquad\qquad\cdots  &\cdots\\[1mm]
    \bLambda_\ord = \bI_{k_1} \!\otimes\!  \cdots \!\otimes\!  \bI_{k_{\ord-1}} \!\otimes\!\diag\pare{\lani{\ord}{1},\cdots ,\lani{\ord}{k_\ord}},&
    \bM_\ord =  \bI_{q_1} \!\otimes\!  \cdots \!\otimes\!   \bI_{q_{\ord-1}}\!\otimes\!\diag\pare{\muni{\ord}{1},\cdots ,\muni{\ord}{q_\ord}},
\end{array}
\end{equation*}
$\IW_\ord = [\bw_{1,1,\cdots,1},\bw_{1,1,\cdots,2},\cdots,\bw_{1,1,\cdots,k_\ord},\bw_{1,\cdots,2,1},\cdots,\bw_{k_1,k_2,\cdots,k_\ord}]$, $\bR_\ord = \bone_{k_1k_2\cdots k_\ord}^\top$,\\ 
$\IV_\ord =  [\bv_{1,1,\cdots,1},\bv_{1,1,\cdots,2},\cdots,\bv_{1,1,\cdots,q_\ord},\bv_{1,\cdots,2,1},\cdots,\bv_{q_1,q_2,\cdots,q_\ord}]^\top$ and 
$\bL_\ord = \bone_{q_1q_2\cdots q_\ord}$.\normalsize
\end{definition}

\begin{theorem}\label{thm:Loen_sylv}
The $\ord$-D Loewner matrix as introduced in \Cref{def:Loen} is the solution of the following set of coupled Sylvester equations:
\begin{equation*}
\left\{\begin{array}{rcl}
    \bM_\ord \IX_1- \IX_1 \bLambda_\ord &=& \IV_\ord \bR_\ord-\bL_\ord \IW_\ord, \\[1mm]
    \bM_{\ord-1} \IX_2- \IX_2 \bLambda_{\ord-1} &=& \IX_1, \\[1mm]
    &\cdots& \\[1mm]
    \bM_{2} \IX_{\ord-1}- \IX_{\ord-1} \bLambda_2 &=& \IX_{\ord-2}, \\[1mm]
    \bM_1 \IL_{\ord} - \IL_{\ord} \bLambda_1  &=& \IX_{\ord-1}.
\end{array}\right.
\end{equation*}
\end{theorem}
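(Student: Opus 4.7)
My plan is to construct the intermediate matrices $\IX_1,\ldots,\IX_{\ord-1}$ explicitly and then verify each Sylvester equation by direct entrywise computation, in strict analogy with the 2-D argument of \Cref{thm:Loe2_sylv}. An induction on $\ord$, peeling off one variable at a time, would work equally well, but this constructive route is cleaner because it exhibits the intermediate $\IX_k$ as useful named objects rather than existential ones.

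First, for $k=1,\ldots,\ord-1$, I would define the ``partial Loewner'' matrix $\IX_k\in\IC^{q_1\cdots q_\ord\times k_1\cdots k_\ord}$ entrywise by
$$(\IX_k)_{(i_1,\ldots,i_\ord),(j_1,\ldots,j_\ord)} \;=\; \dfrac{\bv_{i_1,\ldots,i_\ord}-\bw_{j_1,\ldots,j_\ord}}{\prod_{\ell=\ord-k+1}^{\ord}\pare{\muni{\ell}{i_\ell}-\lani{\ell}{j_\ell}}},$$
so that $\IX_1$ carries a single denominator factor (in the variable $\var{\ord}$), $\IX_{\ord-1}$ carries every denominator factor except the one in $\var{1}$, and the object produced after the final step of the cascade is precisely $\IL_\ord$ of \Cref{def:Loen}. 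The coupled system then becomes the telescoping statement that each step introduces exactly one additional factor in the denominator.

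The second step is the verification. For the initial equation, $\IV_\ord \bR_\ord - \bL_\ord \IW_\ord$ has $((i_1,\ldots,i_\ord),(j_1,\ldots,j_\ord))$ entry equal to $\bv_{i_1,\ldots,i_\ord}-\bw_{j_1,\ldots,j_\ord}$, by construction of the vectorizations and the all-ones vectors. The Kronecker structure of $\bM_\ord$ and $\bLambda_\ord$ places $\muni{\ord}{i_\ord}$ and $\lani{\ord}{j_\ord}$ on the corresponding diagonal positions, so $\bM_\ord \IX_1 - \IX_1 \bLambda_\ord$ evaluates entrywise to $(\muni{\ord}{i_\ord} - \lani{\ord}{j_\ord})\,(\IX_1)_{(i_1,\ldots,i_\ord),(j_1,\ldots,j_\ord)}$, which equals $\bv_{i_1,\ldots,i_\ord}-\bw_{j_1,\ldots,j_\ord}$ after the single cancellation. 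The same argument at level $\ell=\ord-k+1$ establishes the intermediate equations $\bM_{\ord-k+1} \IX_k - \IX_k \bLambda_{\ord-k+1} = \IX_{k-1}$ for $k=2,\ldots,\ord-1$, and the last equation $\bM_1 \IL_\ord - \IL_\ord \bLambda_1 = \IX_{\ord-1}$ produces the full Loewner matrix via the final cancellation.

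The only real obstacle is bookkeeping the Kronecker products so that the diagonal of $\bM_\ell$ (respectively $\bLambda_\ell$) is indexed by the correct coordinate $i_\ell$ (respectively $j_\ell$) within a given multi-index block. This reduces to the standard fact that $\bI_{q_1}\otimes\cdots\otimes\diag(\cdot)\otimes\cdots\otimes\bI_{q_\ord}$ acts as multiplication by the targeted diagonal entry on the $\ell$-th coordinate and as the identity on all other coordinates. Once this is laid out, the remainder of the argument is just the elementary identity $(\mu-\lambda)\cdot\tfrac{v-w}{(\mu-\lambda)\,\Pi}=\tfrac{v-w}{\Pi}$, applied $\ord$ times in succession.
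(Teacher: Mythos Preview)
Your proposal is correct. The paper in fact states \Cref{thm:Loen_sylv} without proof (as it also does for the $1$-D and $2$-D Sylvester characterizations in \Cref{thm:Loe1_sylv} and \Cref{thm:Loe2_sylv}), so there is no explicit argument to compare against; the entrywise verification you outline---defining the partial Loewner matrices $\IX_k$ by successively adjoining one denominator factor at a time and then cancelling that factor against the diagonal action of $\bM_{\ord-k+1}$ and $\bLambda_{\ord-k+1}$---is precisely the computation the paper's cascaded $1$-D/$2$-D build-up is meant to suggest, and it goes through without obstruction.
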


\subsubsection{Null space, Lagrange basis form, and generalized realization}

When using an appropriate number of interpolation points, we can compute the null space of the $\ord$-variable Loewner matrix $\IL_{\ord}$, i.e., $\IL_n\bc_n=0$. Here, we denote with $
\bc_\ord^\top =\left[ \begin{array}{cccc}
\alpha_{1} & \mid\mid \alpha_{2} & \mid \cdots&\mid\mid \alpha_{k_1}\end{array}\right] \in\IC^{k_1k_2\cdots k_\ord}$, written in terms of
\begin{equation*}
\hspace*{-2mm}
\begin{array}{l} 
\alpha_1\!=\!\left[\,c_{1,\,\cdots\,,1,1}\,\cdots\, c_{1,\,\cdots\,,1,k_\ord}~
    c_{1,\,\cdots\,,2,1}\,\cdots\, c_{1,\,\cdots\,,2,k_\ord} \mid\cdots \mid 
     c_{1,k_2,\,\cdots\,,k_{\ord-1},1}\,\cdots\, c_{1,k_2,\,\cdots\,,k_{\ord-1},k_\ord}\,\right],\\[1mm]
\alpha_2=\left[\,c_{2,\cdots,1,1}~ \cdots~ c_{2,\cdots,1,k_\ord} ~\mid~ \cdots ~\right],\\
\alpha_{k_1}= \left[\,  c_{k_1,k_2,\cdots,1} ~\cdots ~c_{k_1,k_2,\cdots,k_\ord} \,\right],
\end{array}
\end{equation*}
which are vectors that contain the so-called barycentric weights of the $\ord$-variable rational function $\bH(\var{1},\var{2},\cdots,\var{\ord})$ given by
\begin{equation*}
\bH(\var{1},\var{2},\cdots,\var{\ord}) =\dfrac{\sum_{j_1=1}^{k_1}\sum_{j_2=1}^{k_2}\cdots \sum_{j_\ord=1}^{k_\ord} \frac{\beta_{j_1,j_2,\cdots,j_\ord}}{\pare{\var{1}-\lan{1}}\pare{\var{2}-\lan{2}}\cdots\pare{\var{\ord}-\lan{\ord}}}}{\sum_{j_1=1}^{k_1}\sum_{j_2=1}^{k_2} \cdots \sum_{j_\ord=1}^{k_\ord} \frac{c_{j_1,j_2,\cdots, j_\ord}}{\pare{\var{1}-\lan{1}}\pare{\var{2}-\lan{2}}\cdots \pare{\var{\ord}-\lan{\ord}}}},
\end{equation*}
where $\bc_\ord^\top \odot \IW_\ord = \left[ \begin{array}{cccc} \beta_{1} & \beta_{2} &\cdots & \beta_{k_1} \end{array}\right]\in\IC^{k_1k_2\cdots k_\ord}$. By construction,  the function $\bH(\var{1},\var{2},\cdots,\var{\ord})$ expressed as above interpolates $\frH(\var{1},\var{2},\cdots,\var{\ord})$ at the support points $\{\lan{1},\lan{2},\cdots,\lan{\ord}\}$. 

\begin{result}[$\ord$-D realization, for $k=1$]\label{res:realn}
    Given \Cref{def:gamma_delta_lag} and following \Cref{thm:realization}, a generalized realization of $\bH(\var{1},\var{2},\cdots,\var{\ord})$ is obtained with the following settings: 
    $$
    \IAlag = \left[
    \begin{array}{cccc} 
    \alpha_1 & \alpha_2 & \cdots & \alpha_{k_1}
    \end{array} \right], \IBlag = \left[
    \begin{array}{cccc} 
    \beta_1 & \beta_2 & \cdots & \beta_{k_1}
    \end{array} \right], \bGamma = \IXlag{1}, \bDelta = \IXlag{2}\!\otimes\!  \cdots \otimes\IXlag{\ord}.
    $$
\end{result}

\begin{example}\label{ex1_3D}
Consider the three-variable rational function $\frH(s,t,p) = (s + pt)/(p^2 + s + t)$  of complexity $(1,1,2)$. It is evaluated at $\lan{1}=[2,4]$, $\lan{2}=[1,3]$, $\lan{3}=[5,6,7]$ and $\mun{1}=-\lan{1}$, $\mun{2}=-\lan{2}$, $\mun{3}=-\lan{3}$. The  3D Loewner matrix $\IL_3$ has the property: $\rank(\IL_3)=11$, and the following quantities are put together:
    \begin{equation*}
    \begin{array}{rcl}
        \bc_3^\top &=& 
        \left[\begin{array}{ccc|ccc|ccc|ccc} \frac{1}{2}& -\frac{39}{28}& \frac{13}{14}& -\frac{15}{28}& \frac{41}{28}& -\frac{27}{28}& -\frac{15}{28}& \frac{41}{28}& -\frac{27}{28}& \frac{4}{7}& -\frac{43}{28}& 1 \end{array}\right] \\[2mm]
        \IW_3 &=& 
        \left[\begin{array}{ccc|ccc|ccc|ccc} \frac{1}{4} & \frac{8}{39} & \frac{9}{52} & \frac{17}{30} & \frac{20}{41} & \frac{23}{54} & \frac{3}{10} & \frac{10}{41} & \frac{11}{54} & \frac{19}{32} & \frac{22}{43} & \frac{25}{56} \end{array}\right]
    \end{array}.
    \end{equation*}
    Following \Cref{res:realn}, we may obtain the realization $(\bC,\bPhi(s,t,p),\bG)$. By arranging as $(s)-(t,p)$, one obtains a realization of dimension $m=13$. Instead, by arranging as $(s,t)-(p)$ we get $m=9$. With the latter partitioning, we obtain $\kappa=2\times2$ (associated to variables $s$ and $t$) and $\ell=3$ (associated with variable $p$). Thus, with reference to the multi-indices of \Cref{def:indices}, we readily have $I_1=i_3^1$, $I_2=i_3^2$ and $I_3=i_3^3$, and $J_1=[j_1^1,j_2^1]$, $J_2=[j_1^2,j_2^2]$, $J_3=[j_1^3,j_2^3]$ and $J_4=[j_1^4,j_2^4]$.
    
    By applying the Schur complement, the resolvent can be compressed to size $6$ as $\bPhi_{\text{c}}(s,t,p) =$
    \begin{equation*}
        \left[\begin{array}{cccc|cc} \left(s-2\right)\,\left(t-1\right) & -\left(s-2\right)\,\left(t-3\right) & -\left(t-1\right)\,\left(s-4\right) & \left(s-4\right)\,\left(t-3\right) & 0 & 0\\ 1-\frac{s}{2} & \frac{s}{2}-1 & \frac{s}{2}-2 & 2-\frac{s}{2} & 0 & 0\\ \frac{1}{2}-\frac{t}{2} & \frac{t}{2}-\frac{3}{2} & \frac{t}{2}-\frac{1}{2} & \frac{3}{2}-\frac{t}{2} & 0 & 0\\ \hline \frac{1}{2} & -\frac{15}{28} & -\frac{15}{28} & \frac{4}{7} & p-5 & p-5\\ -\frac{39}{28} & \frac{41}{28} & \frac{41}{28} & -\frac{43}{28} & 6-p & 0\\ \frac{13}{14} & -\frac{27}{28} & -\frac{27}{28} & 1 & 0 & 7-p \end{array}\right]
        %\left[\!\!\begin{array}{cc|ccccc} s-2 & 4-s & 0 & 0 & 0 & 0 & 0\\[1mm] \hline \frac{1}{2} & -\frac{15}{28} & \left(p-5\right)\,\left(t-1\right) & \left(p-5\right)\,\left(t-1\right) & \frac{t}{2}-\frac{1}{2} & \frac{5}{2}-\frac{p}{2} & \frac{5}{2}-\frac{p}{2}\\[1mm] -\frac{39}{28} & \frac{41}{28} & -\left(p-6\right)\,\left(t-1\right) & 0 & 1-t & \frac{p}{2}-3 & 0\\[1mm] \frac{13}{14} & -\frac{27}{28} & 0 & -\left(p-7\right)\,\left(t-1\right) & \frac{t}{2}-\frac{1}{2} & 0 & \frac{p}{2}-\frac{7}{2}\\[1mm] -\frac{15}{28} & \frac{4}{7} & -\left(p-5\right)\,\left(t-3\right) & -\left(p-5\right)\,\left(t-3\right) & \frac{3}{2}-\frac{t}{2} & \frac{p}{2}-\frac{5}{2} & \frac{p}{2}-\frac{5}{2}\\[1mm] \frac{41}{28} & -\frac{43}{28} & \left(p-6\right)\,\left(t-3\right) & 0 & t-3 & 3-\frac{p}{2} & 0\\[1mm] -\frac{27}{28} & 1 & 0 & \left(p-7\right)\,\left(t-3\right) & \frac{3}{2}-\frac{t}{2} & 0 & \frac{7}{2}-\frac{p}{2} \end{array}\!\!\right],
    \end{equation*}
    Additionally, we obtain that $\bG_{\text{c}}^\top=\left[\begin{array}{ccc|ccc}0&0& 0& 1/2&-1&1/2\end{array}\right]$ and the output matrix becomes parameter-dependent $\bC_{\text{c}}(t,p)=\left[\begin{array}{cccc|cc} \frac{p}{28}+\frac{1}{14} & -\frac{3\,p}{28}-\frac{1}{14} & -\frac{p}{28}-\frac{1}{7} & \frac{3\,p}{28}+\frac{1}{7} & 0 & 0 \end{array}\right]$. The transfer function is perfectly recovered:
    \begin{equation*}
        \hat \bH_{\text{c}}(s,t,p) =  \bC_{\text{c}}(t,p) \bPhi_{\text{c}}(s,t,p)^{-1} \bG_{\text{c}} = \frac{s+p\,t}{p^2+s+t}.
    \end{equation*}
\end{example}

\vspace{2mm}

%%%%%%%%%%%%%%%%%%%%%%%%%%%%%%%%%
\section{Addressing the curse of dimensionality}\label{sec:cod}
As introduced in \Cref{def:Loen}, it follows that the 
$\ord$-D Loewner matrix $\IL_n$ is of dimension $Q\times K$, where $Q=q_1q_2\dots q_\ord$ and $K=k_1k_2\dots k_\ord$. The dimension increases exponentially with the number of parameters and the corresponding 
degrees (this is also obvious when observing its  Kronecker structure). Therefore,  computing $\bc_\ord$, results in ${\cal O}(QK^2)$ or ${\cal O}(KQ^2)$ \flop \ which stands as a limitation of the proposed approach. It is to be noted that the computationally most favorable case is $K=Q=N$, then the complexity is ${\cal O}(N^3)$ \flop.

The need for the full matrix to perform the \texttt{SVD} decomposition renders the process applicability unfeasible for many data sets. Here, the curse of dimensionality (\textbf{C-o-D}) is addressed through a tailored $\ord$-D Loewner matrix null space decomposition, which results in decoupling the variables. More specifically, in this section, we suggest an alternate approach allowing us to construct $\bc_\ord$ without constructing $\IL_\ord$. This approach \textbf{tames} the \textbf{C-o-D}  by constructing a sequence of 1-D Loewner matrices and computing their associated null space instead. Similarly to the previous sections, for clarity, we start with the 2-D and 3-D cases, before addressing the $\ord$-D case. We finally show that in the $\ord$-D case, the null space boils down to (i) a 1-D Loewner matrix null space and (ii) multiple $(\ord-1)$-D Loewner matrix null spaces. With a recursive procedure, $(n-1)$-D becomes $(n-2)$-D, etc. Thus, this leads to a series of 1-D Loewner matrix null space computations. By avoiding the explicit large-scale $\ord$-D Loewner matrix construction, which is replaced by small-scale 1-D Loewner matrices, it results in drastic \flop \ and storage savings.

\subsection{Null space computation in the 2-D case} %Let us first consider the two variables case. 

\begin{theorem}\label{thm:cod2}
Let $h_{i,j}\in\IC$ be measurements of the transfer function $\frH(\var{1},\var{2})$, 
with $\vargen{s_i}{1}$, $i=1,\cdots,n$, and $\vargen{s_j}{2}$, $j=1,\cdots,m$. Let $k_1= n/2  $ and $k_2= m/2$ be number of column interpolation points (see $P_c^{(2)}$ in \cref{eq:data_2}). The null space of the corresponding 2-D Loewner matrix is spanned by\footnote{We assume here that $n=k_1+q_1$ and $m=k_2+q_2$ and $k_1=q_1$ and $k_2=q_2$. In other configurations, specific treatment is needed.}
\begin{align}
    \cN(\IL_2)=\bvec \left[
    \bc_1^{\lani{2}{1}}\cdot\left[{\bc_1^{\lani{1}{k_1}} }\right]_1,
    \cdots,
    \bc_1^{\lani{2}{k_2}}\cdot\left[\bc_1^{\lani{1}{k_1}}\right]_{k_2} \right],
    \vspace{-6mm}
\end{align}
where $\bc_1^{\lani{1}{k_1}}=\cN(\IL_1^{\lani{1}{k_1}})$ is the null space of the 1-D Loewner matrix for frozen $\var{1}=\lani{1}{k_1}$, and $\bc_1^{\lani{2}{j}}=\cN(\IL_1^{\lani{2}{j}})$ is the $j$-th null space of the 1-D Loewner matrix for frozen $\vargen{s_j}{2}=\{\lani{2}{1},\cdots,\lani{2}{k_2}\}$.
\end{theorem}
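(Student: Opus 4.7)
The plan is to verify the identity $\IL_2\,\bc_2=0$ by a two-stage direct computation, invoking 1-D Loewner theory (\Cref{def:Loe1,thm:Loe1_sylv}) successively in each variable. With $c_{j_1,j_2}=[\bc_1^{\lani{2}{j_2}}]_{j_1}[\bc_1^{\lani{1}{k_1}}]_{j_2}$ (what the $\bvec$ formula encodes, up to indexing convention), I start by expanding
\begin{equation*}
(\IL_2\,\bc_2)_{i_1,i_2}=\sum_{j_1,j_2}\frac{h_{k_1+i_1,k_2+i_2}-h_{j_1,j_2}}{(\muni{1}{i_1}-\lani{1}{j_1})(\muni{2}{i_2}-\lani{2}{j_2})}\,[\bc_1^{\lani{2}{j_2}}]_{j_1}\,[\bc_1^{\lani{1}{k_1}}]_{j_2}.
\end{equation*}
For each fixed $j_2$, the inner sum over $j_1$ produces $D_{j_2}(\muni{1}{i_1}):=\sum_{j_1}[\bc_1^{\lani{2}{j_2}}]_{j_1}/(\muni{1}{i_1}-\lani{1}{j_1})$ and $N_{j_2}(\muni{1}{i_1}):=\sum_{j_1}[\bc_1^{\lani{2}{j_2}}]_{j_1}h_{j_1,j_2}/(\muni{1}{i_1}-\lani{1}{j_1})$, the denominator and numerator of the 1-D barycentric form in $\var{1}$ at frozen $\var{2}=\lani{2}{j_2}$. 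Because $\bc_1^{\lani{2}{j_2}}\in\cN(\IL_1^{\lani{2}{j_2}})$, the 1-D theory guarantees $N_{j_2}(s)/D_{j_2}(s)=\frH(s,\lani{2}{j_2})$ as rational functions, whence $N_{j_2}(\muni{1}{i_1})=h_{k_1+i_1,j_2}\,D_{j_2}(\muni{1}{i_1})$.

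Substituting this identity collapses the double sum to a single one:
\begin{equation*}
(\IL_2\,\bc_2)_{i_1,i_2}=\sum_{j_2}\bigl([\bc_1^{\lani{1}{k_1}}]_{j_2}\,D_{j_2}(\muni{1}{i_1})\bigr)\,\frac{h_{k_1+i_1,k_2+i_2}-h_{k_1+i_1,j_2}}{\muni{2}{i_2}-\lani{2}{j_2}},
\end{equation*}
which is exactly the $i_2$-th entry of $\IL_1^{\muni{1}{i_1}}\,w$, where $\IL_1^{\muni{1}{i_1}}$ is the 1-D Loewner matrix in $\var{2}$ built from frozen-$\var{1}=\muni{1}{i_1}$ data, and $w_{j_2}:=[\bc_1^{\lani{1}{k_1}}]_{j_2}\,D_{j_2}(\muni{1}{i_1})$. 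It suffices to show $w\in\cN(\IL_1^{\muni{1}{i_1}})=\bspan\{\bc_1^{\muni{1}{i_1}}\}$. Writing $\frH=N_{\frH}/D_{\frH}$, the closed-form 1-D weights $[\bc_1^{\sigma}]_{j_2}\propto D_{\frH}(\sigma,\lani{2}{j_2})/\prod_{k\neq j_2}(\lani{2}{j_2}-\lani{2}{k})$ apply at both $\sigma=\lani{1}{k_1}$ and $\sigma=\muni{1}{i_1}$; moreover, Lagrange interpolation of the polynomial $D_{\frH}(\cdot,\lani{2}{j_2})$ through the $k_1$ points $\lani{1}{j_1}$ identifies $D_{j_2}(s)\propto D_{\frH}(s,\lani{2}{j_2})/\prod_{k}(s-\lani{1}{k})$, with the proportionality constant governed by the normalization of $\bc_1^{\lani{2}{j_2}}$. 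Pivoting at $[\bc_1^{\lani{2}{j_2}}]_{k_1}=1$ forces this constant to be proportional to $1/D_{\frH}(\lani{1}{k_1},\lani{2}{j_2})$, so that the factors $D_{\frH}(\lani{1}{k_1},\lani{2}{j_2})$ inherited from $[\bc_1^{\lani{1}{k_1}}]_{j_2}$ cancel, leaving $w_{j_2}$ proportional (with a scalar independent of $j_2$) to $D_{\frH}(\muni{1}{i_1},\lani{2}{j_2})/\prod_{k\neq j_2}(\lani{2}{j_2}-\lani{2}{k})=[\bc_1^{\muni{1}{i_1}}]_{j_2}$. Hence $\IL_1^{\muni{1}{i_1}}w=0$ and $(\IL_2\,\bc_2)_{i_1,i_2}=0$ for every $(i_1,i_2)$.

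The main obstacle is not the algebra itself but the normalization bookkeeping: each of the $k_2$ vectors $\bc_1^{\lani{2}{j_2}}$ is individually defined only up to scalar multiple, and the product formula is coherent only when those scales are pinned down consistently. The role of the reference index $k_1$ in the theorem is precisely to furnish that pivot — normalizing each $\bc_1^{\lani{2}{j_2}}$ by its $k_1$-th entry is what makes the $D_{\frH}(\lani{1}{k_1},\lani{2}{j_2})$ factors telescope across $j_2$. Beyond the containment $\bc_2\in\cN(\IL_2)$, the assertion that this vector \emph{spans} the null space requires the 2-D Loewner rank identity $\rank(\IL_2)=k_1k_2-1$, which in the balanced regime $n=2k_1$, $m=2k_2$ of the footnote holds whenever $\frH$ has exact complexity $(k_1-1,k_2-1)$; this is a standard fact from \cite{AIL2012}.
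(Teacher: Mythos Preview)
Your proof is correct but follows a genuinely different route from the paper. The paper argues at the level of the denominator polynomial $\bd(s,t)=\pi\sum_{i,j}\frac{\alpha_{i,j}}{(s-s_i)(t-t_j)}$: freezing $t=t_j$ collapses $\bd$ to a single-variable barycentric form whose coefficients are the $j$-th column of $\IAlag$ (hence recovered, up to a column-wise scalar, as $\cN(\IL_1^{t_j})$), and freezing $s=s_n$ similarly recovers the last row. Dividing each column by its last entry and re-multiplying by the anchor row fixes all the floating scalars at once. You instead verify $\IL_2\bc_2=0$ entry by entry, telescoping the double sum via the 1-D barycentric identity $N_{j_2}/D_{j_2}=\frH(\cdot,\lani{2}{j_2})$ and then invoking the closed-form expression $[\bc_1^{\sigma}]_{j_2}\propto D_{\frH}(\sigma,\lani{2}{j_2})/\prod_{k\neq j_2}(\lani{2}{j_2}-\lani{2}{k})$ to exhibit the residual weight vector $w$ as a scalar multiple of $\bc_1^{\muni{1}{i_1}}$.

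The trade-off: the paper's argument is shorter and makes the normalization step (your ``pivoting at $k_1$'') completely transparent --- it is literally just dividing columns by a common pivot row --- but it tacitly uses that $\cN(\IL_2)$ \emph{is} the vectorized $\IAlag$, which is the 2-D Loewner theory from \cite{AIL2012}. Your argument is more self-contained in that it checks membership $\bc_2\in\cN(\IL_2)$ from scratch, but pays for it by needing the explicit rational-function formula for 1-D weights and a somewhat delicate cancellation of the $D_{\frH}(\lani{1}{k_1},\lani{2}{j_2})$ factors. Both routes defer the one-dimensionality of $\cN(\IL_2)$ to the rank identity $\rank(\IL_2)=k_1k_2-1$ from \cite{AIL2012}, as you correctly note.
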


\begin{proposition}\label{corr:cod2}
Given the setup in \Cref{thm:cod2}, the null space computation \flop \ complexity is 
$k_1^3+k_1k_2^3$ or $k_2^3+k_2k_1^3$, instead of $k_1^3k_2^3$.
\end{proposition}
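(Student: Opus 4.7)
The plan is to argue purely by counting the cost of each 1-D null space computation that appears in the decomposition of \Cref{thm:cod2}, and then compare with the cost of computing the null space of the full 2-D Loewner matrix directly. The only nontrivial operations involved are SVDs (or equivalent rank-revealing factorizations) of small Loewner blocks, and a Kronecker-like reshaping, which is essentially free.

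First I would recall that computing the null space of an $n\times n$ matrix via SVD (or \texttt{QR} with column pivoting) takes $\mathcal{O}(n^3)$ \flop, and therefore the direct null space computation of $\IL_2\in\IC^{k_1k_2\times k_1k_2}$ costs $\mathcal{O}((k_1k_2)^3)=\mathcal{O}(k_1^3k_2^3)$ \flop. This gives the baseline against which the decomposition of \Cref{thm:cod2} must be compared.

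Next I would itemize the 1-D null space computations required by \Cref{thm:cod2}. Freezing $\var{1}=\lani{1}{k_1}$, the associated 1-D Loewner matrix $\IL_1^{\lani{1}{k_1}}$ acts in the variable $\var{2}$ and has size $k_2\times k_2$ (under the assumption $q_l=k_l$), so computing $\bc_1^{\lani{1}{k_1}}$ costs $\mathcal{O}(k_2^3)$ \flop. Then, for each of the $k_2$ freezings $\var{2}=\lani{2}{j}$, $j=1,\ldots,k_2$, one computes $\bc_1^{\lani{2}{j}}=\cN(\IL_1^{\lani{2}{j}})$ with $\IL_1^{\lani{2}{j}}\in\IC^{k_1\times k_1}$, each such computation costing $\mathcal{O}(k_1^3)$ \flop. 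Summing the two contributions yields a total cost of $k_2^3+k_2k_1^3$ \flop; by symmetry, swapping the roles of the two variables (freeze $\var{2}$ first, then sweep over $\var{1}$) yields the alternative bound $k_1^3+k_1k_2^3$ \flop. Finally, assembling the full null space vector via the \textbf{vec} / Kronecker-type reshape in \Cref{thm:cod2} requires only $\mathcal{O}(k_1k_2)$ element-wise operations, which is dominated by the SVD costs and can be absorbed into the leading terms.

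The comparison $k_1^3+k_1k_2^3$ versus $k_1^3k_2^3$ then makes the gain explicit: the multiplicative coupling between $k_1$ and $k_2$ that drives the curse of dimensionality in the direct approach is replaced by an additive structure plus a single linear factor, which is the numerical manifestation of the decoupling of variables established in \Cref{thm:cod2}. No real obstacle arises here, since the result is a straightforward consequence of the decomposition itself; the only point that deserves care is making explicit the dimensional assumptions ($q_l=k_l$, already stated in the footnote of \Cref{thm:cod2}) so that each 1-D Loewner block is indeed square of the advertised size, and noting that under unequal $k_l,q_l$ the same reasoning gives $k_2 q_2^2 + k_2(k_1 q_1^2)$ type expressions.
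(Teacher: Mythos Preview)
Your proposal is correct and follows essentially the same approach as the paper: you count one 1-D Loewner null space of size $k_2\times k_2$ (cost $k_2^3$) plus $k_2$ null spaces of size $k_1\times k_1$ (cost $k_2k_1^3$), then invoke symmetry for the alternative ordering, exactly as the paper does at the end of its proof of \Cref{thm:cod2}. Your treatment is in fact slightly more careful than the paper's, since you explicitly note the $q_l=k_l$ assumption and observe that the final $\mathcal{O}(k_1k_2)$ reshaping cost is dominated by the SVD terms.
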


\begin{proof}
For simplicity of exposition, let us denote by $h_{i,j}\in\IC$ the value of a transfer function $\frH(s_i,t_j)$. We denote the Lagrange monomials by $s-s_i$, $i=1,\cdots,n$ and by $t-t_j$, $j=1,\cdots ,m$. Then, let the response tableau (the data used for constructing the Loewner matrix) and corresponding barycentric weights $\IAlag$ be defined as 
\begin{equation*}
\underbrace{
\left[\begin{array}{c|cccc}
& t_1&t_2&\cdots&t_m\\ \hline
s_1&   h_{1,1} & h_{1,2} & \cdots & h_{1,m}\\
s_2&   h_{2,1} & h_{2,2} & \cdots &  h_{2,m}\\
\vdots& \vdots & \vdots & \cdots & \vdots\\
s_{n-1}&   h_{n-1,1} & h_{n-1,2} & \cdots &  h_{n-1,m}\\
s_n&   h_{n,1} & h_{n,2} & \cdots & h_{n,m}
\end{array}\right]}_{\tableau{2}} \text{ , }
\underbrace{
\left[\begin{array}{c|cccc}
& t_1&t_2&\cdots&t_m\\\hline
s_1&   \alpha_{1,1} & \alpha_{1,2} & \cdots & \alpha_{1,m}\\
s_2&   \alpha_{2,1} & \alpha_{2,2} & \cdots & \alpha_{2,m}\\
\vdots& \vdots & \vdots & \cdots & \vdots\\
s_{n-1}&   \alpha_{n-1,1} & \alpha_{n-1,2} & \cdots &  \alpha_{n-1,m}\\
s_n&   \alpha_{n,1} & \alpha_{n,2} & \cdots & \alpha_{n,m}
\end{array}\right]}_{\IAlag}.
\end{equation*}

The denominator polynomial in the Lagrange basis is given by $\bd(s,t)=\pi{\sum_{i,j=1}^{n,m}}\frac{\alpha_{i,j}}{(s-s_i)(t-t_j)}$, where $\pi=\Pi_{i=1}^n\Pi_{j=1}^m(s-s_i)(t-t_j)$. The coefficients are given by the null space of the associated 2-D Loewner matrix, i.e. $\cN(\IL_{2})=\bspan(\bvec(\IAlag))$ (where $\IAlag=[\alpha_{i,j}]$).

If now we set $t=t_j$ ($j=1,\cdots,m$), the denominator polynomial becomes $\bd(s,t_j)=\pi_{t_j}{\sum_{i}^{n}}\frac{\alpha_{i,j}}{(s-s_i)}$ where $\pi_{t_j}=\Pi_{i=1}^n(s-s_i)\Pi_{k\neq j}(t_j-t_k)$. In this case, the coefficients are given by the null space of the associated 1-D Loewner matrix, i.e. $\cN(\IL_{1}^{t_j})=\bspan(\left[\alpha_{1,j},\cdots,\alpha_{n-1,j},\alpha_{n,j}\right]^\top)$. Thus, these quantities reproduce the columns of $\IAlag$, up to a constant, for each column.

Similarly, for $s=s_n$, we get $\bd(s_n,t)=\pi_{s_n}{\sum_{j}^{m}}\frac{\alpha_{n,j}}{(t-t_j)}$ where $\pi_{s_n}=\Pi_{i=1}^{n-1}(s_n-s_i)\Pi_{j=1}^m(t-t_j)$. Again, the coefficients are given by the null space of the associated 1-D Loewner matrix, i.e. $\cN(\IL_{1}^{s_n})=\bspan(\left[\alpha_{n,1},\cdots,\alpha_{n,m-1},\alpha_{n,m}\right]^\top)$.

This reproduces the last row of $\IAlag$, up to a constant. To eliminate these constants, we divide the corresponding vectors by $\alpha_{i,m}$ and obtain the following:
\begin{equation*}
\begin{array}{ccccc}
\dfrac{\alpha_{1,1}}{\alpha_{n,1}} & \dfrac{\alpha_{1,2}}{\alpha_{n,2}} & \cdots & 
\dfrac{\alpha_{1,m-1}}{\alpha_{n,m-1}} & \dfrac{\alpha_{1,m}}{\alpha_{n,m}}\\%[1mm]
\dfrac{\alpha_{2,1}}{\alpha_{n,1}} & \dfrac{\alpha_{2,2}}{\alpha_{n,2}} & \cdots & 
\dfrac{\alpha_{2,m-1}}{\alpha_{n,m-1}} & \dfrac{\alpha_{2,m}}{\alpha_{n,m}}\\%[1mm]
\vdots & \vdots & \cdots & \vdots & \vdots\\%[1mm]
\dfrac{\alpha_{n-1,1}}{\alpha_{n,1}} & \dfrac{\alpha_{n-1,2}}{\alpha_{n,2}} & \cdots & 
\dfrac{\alpha_{n-1,m-1}}{\alpha_{n,m-1}} & \dfrac{\alpha_{n-1,m}}{\alpha_{n,m}}\\%[1mm]
1 & 1 & \cdots & 1 & 1\\%[2mm]
\hline\hline \\
%&&&&\\[-3mm]
\dfrac{\alpha_{n,1}}{\alpha_{n,m}}& \dfrac{\alpha_{n,2}}{\alpha_{n,m}}&\cdots&\dfrac{\alpha_{n,m-1}}{\alpha_{n,m}}&1%\\[1mm]
\end{array}
\end{equation*}

Finally, multiplying the $j$-th column with the $j$-th entry of the last row yields a vector that spans the desired null space of $\IL_{2}$. The procedure requires computing the null space of $m$ 1-D Loewner matrices of size $n\times n$ and one 1-D Loewner matrix of size $m\times m$. Consequently, the number of flops is $m n^3+m^3$  instead of $n^3 m^3$, concluding the proof. 
%$\hspace{14mm} \square$

\end{proof}

\begin{remark}[Normalization with other elements]
    In the above treatment, we normalize with the last element of the last row. However, it is clear that normalization with other elements can be chosen. This is especially relevant if the last element is zero, i.e., $\alpha_{n,m}=0$. In such a case, if we choose the $k$-th row, we need the barycentric coefficients of $k$-th first variable.
\end{remark}

\begin{example}\label{ex1_2D_cod}
    Continuing \Cref{ex1_2D}, we construct the tableau with the corresponding values, leading to \Cref{tab:2D_cod_ex}. 
    Here, instead of constructing the 2-D Loewner matrix $\IL_2$ as in \Cref{ex1_2D}, we invoke \Cref{thm:cod2}. We thus construct a sequence of 1-D Loewner matrices as follows\footnote{Here, $n=6$, $m=4$, $k_1=3$, $k_2=2$.}:

    \begin{itemize}
        \item First construct a 1-D Loewner matrix along $\var{1}$, for $\var{2}=\vargen{\lambda_2}{2}=-3$, i.e. considering data of $\tableau{2}(:,2)$ (second column). This leads to 
        \begin{equation*}
            \IL_1^{\vargen{\lambda_2}{2}}=\left[\begin{array}{ccc} -\frac{3}{5} & -\frac{9}{7} & -\frac{5}{3}\\[1mm] -\frac{7}{5} & -\frac{13}{7} & -\frac{19}{9}\\[1mm] -\frac{9}{5} & -\frac{15}{7} & -\frac{7}{3} \end{array}\right]
            \mbox{ and }
            { \bc_1^{\vargen{\lambda_2}{2}}}=    \left[\begin{array}{c} \frac{5}{9}\\[1mm] -\frac{14}{9}\\[1mm] 1 \end{array}\right].
        \end{equation*}
    \item Then, construct three 1-D Loewner matrices along $\var{2}$ for $\var{1}=\{\vargen{\lambda_1}{1},\vargen{\lambda_2}{1},\vargen{\lambda_3}{1}\}$,  i.e. considering data of $\tableau{2}(1,:)$, $\tableau{2}(2,:)$ and $\tableau{2}(3,:)$ (first, second and third rows). This leads to:\\[1mm]
    $\IL_1^{\vargen{\lambda_1}{1}}=\left[\begin{array}{cc} 
    \frac{1}{6} & \frac{1}{10}\\[1mm] \frac{1}{9} & \frac{1}{15} \end{array}\right]$ $\Rightarrow$
    $\bc_1^{\vargen{\lambda_1}{1}}=\left[\!\!\begin{array}{c} -\frac{3}{5}\\[1mm] 1 \end{array}\!\!\right]$,\\[1mm]
    $\IL_1^{\vargen{\lambda_2}{1}}=    \left[\begin{array}{cc} \frac{6}{5} & \frac{6}{7}\\[1mm] \frac{9}{10} & \frac{9}{14} \end{array}\right]$ $\Rightarrow$
    $\bc_1^{\vargen{\lambda_2}{1}}=\left[\!\!\begin{array}{c} -\frac{5}{7}\\[1mm] 1 \end{array}\!\!\right]$,
    $\IL_1^{\vargen{\lambda_3}{1}}=\left[\begin{array}{cc} \frac{75}{28} & \frac{25}{12}\\[1mm] \frac{15}{7} & \frac{5}{3} \end{array}\right]$ $\Rightarrow$
    $\bc_1^{\vargen{\lambda_3}{1}}=\left[\!\!\begin{array}{c} -\frac{7}{9}\\[1mm] 1 \end{array}\!\!\right]$.\\[1mm]
    \item Finally, $\hat \bc_2 = 
    \left[\begin{array}{ccc} 
    \bc_1^{\vargen{\lambda_1}{1}}\cdot [{\bc_1^{\vargen{\lambda_2}{2}}}]_1 &%\\[1mm]
    \bc_1^{\vargen{\lambda_2}{1}}\cdot [{\bc_1^{\vargen{\lambda_2}{2}}}]_2&%\\[1mm]
    \bc_1^{\vargen{\lambda_3}{1}}\cdot [{\bc_1^{\vargen{\lambda_2}{2}}}]_3%\\[1mm]
    \end{array}\right]^\top$, 
    the scaled null space vector is equal to $\bc_2$, directly obtained with the 2-D Loewner matrix (see \Cref{ex1_2D}). Similarly, the rational function and realization follow.\
    \end{itemize}
    The corresponding computational cost is obtained by adding the following \flop: one 1-D Loewner matrix of dimension $3\times 3$ $\rightsquigarrow$ null space computation $3^3=27$ \flop, and three $2\times 2$ 1-D Loewner matrices $\rightsquigarrow$ null space computation is $2^3=8$ \flop. Thus, $27+3\times 8=51$ \flop \ are needed here while $6^3=216$ \flop \ were required in \Cref{ex1_2D}, involving $\IL_2$ directly. Note that the very same result may be obtained by first computing $\cN(\IL_1^{\vargen{\lambda_3}{1}})\in\IR^{2}$, then $\cN(\IL_1^{\vargen{\lambda_1}{2}})$, $\cN(\IL_1^{\vargen{\lambda_2}{2}})\in\IR^{3}$. In this case, the computational cost would be $2^3+2\times 3^3=62$ \flop.
    %In this configuration, the data used are symbolically reported on \cref{tab:2D_cod_ex}.
\end{example}

\begin{table}
\begin{center}
\begin{tabular}{|l|cc|cc|}\hline
%\diagbox{$s_1$}{$s_2$} & $\lani{2}{1}=-1$ & $\lani{2}{2}=-3$ &  $\muni{2}{1}=-2$ & $\muni{2}{2}=-4$ \\[1mm] \hline
\diagbox{$\var{1}$}{$\var{2}$} & $\vargen{\lambda_1}{2} =-1$ & $\vargen{\lambda_2}{2}=-3$ &  $\vargen{\mu_1}{2}=-2$ & $\vargen{\mu_2}{2}=-4$ \\[1mm] \hline
$\vargen{\lambda_1}{1}=1$ & $h_{1,1}=-\frac{1}{3}$ & $h_{1,2}=-\frac{3}{5} $ & $h_{1,3}=-\frac{1}{2}$ & $h_{1,4}=-\frac{2}{3} $\\[1mm] 
$\vargen{\lambda_2}{1}=3$ & $h_{2,1}=-\frac{9}{5}$ & $h_{2,2}=-\frac{27}{7}$ & $h_{2,3}=-3$& $h_{2,4}=-\frac{9}{2}$\\[1mm] 
$\vargen{\lambda_3}{1}=5$ & $h_{3,1}=-\frac{25}{7}$ & $h_{3,2}=-\frac{25}{3}$ & $h_{3,3}=-\frac{25}{4}$& $h_{3,4}=-10$\\[1mm] \hline
$\vargen{\mu_1}{1}=0$ & $h_{4,1}=0$ & $h_{4,2}=0$ & $h_{4,3}=0$& $h_{4,4}=0$\\[1mm] 
$\vargen{\mu_2}{1}=2$ & $h_{5,1}=-1$ & $h_{5,2}=-2$ & $h_{5,3}=-\frac{8}{5} $& $h_{5,4}=-\frac{16}{7}$\\[1mm] 
$\vargen{\mu_3}{1}=4$ & $h_{6,1}=-\frac{8}{3}$ & $h_{6,2}=-6$ & $h_{6,3}=-\frac{32}{7}$& $h_{6,4}=-\frac{64}{9}$\\[1mm] \hline
\end{tabular}
\end{center}
\caption{$2$-D tableau for \Cref{ex1_2D_cod}: $\tableau{2}$.}
\label{tab:2D_cod_ex}
\end{table}

\subsection{Null space computation in the 3-D case} 
%Let us now consider the three variables case. 

\begin{theorem}\label{thm:cod3}
Let $h_{i,j,k}\in\IC$ be measurements of the response of a transfer function $\frH(\var{1},\var{2},\var{3})$, along with $\vargen{s_i}{1}$, $\vargen{s_j}{2}$ and  $\vargen{s_k}{3}$ ($i=1,\cdots,n$, $j=1,\cdots,m$ and $k=1,\cdots,p$), and let $k_1\leq n/2$, $k_2\leq m/2$ and $k_3\leq p/2$ be number of column interpolation points (see $P_c^{(3)}$ in \cref{eq:data_n}, $\ord=3$). The null space of the corresponding 3-D Loewner matrix is spanned by\footnote{We assume here that $n=k_1+q_1$ and $m=k_2+q_2$ and $p=k_3+q_3$, and $k_1=q_1$, $k_2=q_2$ and $k_3=q_3$. In other configurations, specific treatment is needed.}

\begin{align}
\cN(\IL_3)=\bvec \left[
    \bc_2^{\lani{1}{1}}\cdot\left[{\bc_1^{(\lani{2}{k_2},\lani{3}{k_3})}}\right]_1,
    \cdots,
    \bc_2^{\lani{1}{k_1}}\cdot\left[{\bc_1^{(\lani{2}{k_2},\lani{3}{k_3})}}\right]_{k_1}
    \right],
\end{align}
where ${\bc_1^{(\lani{2}{k_2},\lani{3}{k_3})}=\cN(\IL_1^{(\lani{2}{k_2},\lani{3}{k_3})})}$ is the null space of the 1-D Loewner matrix for frozen $\{\var{2},\var{3}\}=\{\lani{2}{k_2},\lani{3}{k_3}\}$, and $\bc_2^{\lani{1}{j}}=\cN(\IL_2^{\lani{1}{j}})$ is the $j$-th null space of the 2-D Loewner matrix for frozen $\vargen{s_j}{1}=\{\lani{1}{1},\cdots,\lani{1}{k_1}\}$.
\end{theorem}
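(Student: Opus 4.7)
The plan is to mimic, with one additional recursion layer, the argument used for Theorem 4.3. Writing the three-variable denominator polynomial in the Lagrange basis as
$$\bd(s,t,p)=\pi\sum_{i=1}^{n}\sum_{j=1}^{m}\sum_{k=1}^{p}\frac{\alpha_{i,j,k}}{(s-s_i)(t-t_j)(p-p_k)},$$
where $\pi$ is the product of all the Lagrange monomials in the three variables, the goal is to recover the tensor $[\alpha_{i,j,k}]$ whose vectorization spans $\cN(\IL_3)$, but to do so \emph{without} ever forming $\IL_3$. The key observation, already used in the 2-D case, is that substituting interpolation support points for one or several variables collapses the multivariate denominator to a lower-dimensional barycentric polynomial whose coefficients coincide, up to scaling, with the fibers/slices of $[\alpha_{i,j,k}]$.

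First, for each $j=1,\ldots,k_1$, I would freeze $\var{1}=\lani{1}{j}$. The resulting polynomial $\bd(\lani{1}{j},t,p)$ is a two-variable Lagrange expression whose coefficients are exactly $\alpha_{j,\cdot,\cdot}$, up to a slice-dependent nonzero constant $\gamma_j$. By Theorem 4.3 applied to the frozen 2-D data, these coefficients are delivered by $\bc_2^{\lani{1}{j}}=\cN(\IL_2^{\lani{1}{j}})$. This produces $k_1$ candidate slices of the target tensor, each correct only up to its own unknown scaling $\gamma_j$.

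To fix the $k_1$ constants $\gamma_j$ consistently, I would then jointly freeze $\var{2}=\lani{2}{k_2}$ and $\var{3}=\lani{3}{k_3}$, which collapses the problem to a univariate one in $s$. By the 1-D theory, $\bc_1^{(\lani{2}{k_2},\lani{3}{k_3})}=\cN(\IL_1^{(\lani{2}{k_2},\lani{3}{k_3})})$ reproduces the fiber $\alpha_{\cdot,k_2,k_3}$ up to a single global scalar. Normalizing each $\bc_2^{\lani{1}{j}}$ so that its entry at the index $(k_2,k_3)$ equals one, and then multiplying it by the $j$-th entry of $\bc_1^{(\lani{2}{k_2},\lani{3}{k_3})}$, restores the correct relative scaling between the $k_1$ slices, in direct analogy with the normalization used in the proof of Theorem 4.3. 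Vectorizing the resulting tensor in the prescribed ordering then yields a vector that lies in $\cN(\IL_3)$, and under the assumed rank conditions this null space is one-dimensional, hence spanned by it. The final flop count follows by adding the cost of $k_1$ 2-D null space computations (each handled recursively via Theorem 4.3) to the cost of one 1-D null space computation.

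The main obstacle is ensuring that the pivot entry used to pin down the scaling is nonzero and that the ranks of the intermediate Loewner matrices agree with the one-dimensional-null-space hypothesis. Nonzeroness of the pivot is a mild genericity condition on the interpolation data (with the same remedy as in the 2-D remark after Theorem 4.3: another pivot can be chosen if needed), while the rank condition follows from the assumption $k_l\le n_l/2$ together with the complexity of the underlying multivariate rational function being matched by the chosen Lagrange basis sizes. Once these are in place, the recursion descends cleanly from 3-D to 2-D and then to 1-D, and the argument is essentially a verbatim iteration of the 2-D proof.
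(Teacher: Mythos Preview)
Your proposal is correct and follows essentially the same approach as the paper's own proof, which simply states that the argument mirrors the 2-D case: compute one 1-D Loewner null space with two variables frozen, then $k_1$ 2-D Loewner null spaces with the first variable frozen, and apply the same normalization/scaling trick. You have in fact spelled out the recursion and the pivot/rank caveats more carefully than the paper does.
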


\begin{proof}
    The proof follows that of \Cref{thm:cod2}. First, a 1-D null space Loewner matrix is computed for two frozen variables. Then, 2-D Loewner matrices are computed along with the two other variables. Scaling is similarly applied.
\end{proof}

\begin{remark}[Toward recursivity]
    From \Cref{thm:cod3}, it follows that the 3-D Loewner matrix null space may be obtained from one 1-D Loewner matrix, followed by multiple 2-D Loewner matrices. Then, invoking  \Cref{thm:cod2}, these 2-D Loewner matrix null spaces may be split into a sequence of 1-D Loewner matrix null spaces. Therefore, a recursive scheme naturally appears (see \Cref{ex1_3D_cod}).
\end{remark}

\begin{example}\label{ex1_3D_cod}
We continue with \Cref{ex1_3D}. We now illustrate how much the complexity and dimensionality issue may be reduced when applying the suggested recursive process. First, recall that the three-dimensional Loewner matrix $\IL_3$ is of size $12 \times 12$ and its null space is denoted with $\bc_3^\top = [\bc_{3,1}^\top||\bc_{3,2}^\top]$, given as 
    \begin{equation*}
        \bc_3 =
        \left[\begin{array}{ccc|ccc||ccc|ccc} \frac{1}{2}& -\frac{39}{28}& \frac{13}{14}& -\frac{15}{28}& \frac{41}{28}& -\frac{27}{28}& -\frac{15}{28}& \frac{41}{28}& -\frac{27}{28}& \frac{4}{7}& -\frac{43}{28}& 1 \end{array}\right]^\top.
    \end{equation*}
    Computing such a null space requires an \texttt{SVD} matrix decomposition of complexity $12^3=1,728$ \flop. Here, instead of constructing the 3-D Loewner matrix $\IL_3$ as in \Cref{ex1_3D}, one may construct a sequence of 1-D Loewner matrices, using a recursive approach as follows:
    \begin{itemize}
        \item First, a 1-D Loewner matrix along the first variable $\var{1}$ for frozen second and third variables $\vargen{\lambda_2}{2}=3$ and $\vargen{\lambda_3}{3}=7$, i.e. elements of $\tableau{3}(:,2,3)$, leading to
        \begin{equation*}
        \IL_1^{(\vargen{\lambda_2}{2},\vargen{\lambda_3}{3})}=\left[\begin{array}{cc} \frac{31}{2700} & \frac{31}{2800}\\[1mm] \frac{31}{2592} & \frac{31}{2688} \end{array}\right]
        \text{ and }
        {\bc_1^{(\vargen{\lambda_2}{2},\vargen{\lambda_3}{3})}}=\left[\begin{array}{c} -\frac{27}{28}\\[1mm] 1 \end{array}\right].
        \end{equation*}
        \item Second, as $\vargen{\lambda_j}{1}$ is of dimension two ($k_1=2$), two 2-D Loewner matrices appear: one for frozen $\vargen{\lambda_1}{1}$ and one for frozen $\vargen{\lambda_2}{1}$, along $\var{2}$ and $\var{3}$, i.e. elements of $\tableau{3}(1,:,:)$ and $\tableau{3}(2,:,:)$. The first and second 2-D Loewner matrices lead to null spaces spanned by: \small
        \begin{equation*}
            \bc_2^{\vargen{\lambda_1}{1}}\!=\!
            \left[\!\frac{-14}{27},~ \frac{13}{9}, ~ \frac{-26}{27}, ~ \frac{5}{9},
            ~\frac{-41}{27},~ 1\!\right]^\top\!,
            \bc_2^{\vargen{\lambda_2}{1}}\!=\!
            \left[\!\frac{-15}{28},~\frac{41}{28},~\frac{-27}{28},~\frac{4}{7},~\frac{-43}{28},~ 1 \!\right]^\top,
        \end{equation*}
        \normalsize
        which can now be scaled by the coefficients of $\bc_1^{(\vargen{\lambda_2}{2},\vargen{\lambda_3}{3})}$, leading to
        \begin{equation*}
        \hat \bc_3=\left[\begin{array}{cc} 
        \bc_2^{\vargen{\lambda_1}{1}} \cdot  [{ \bc_1^{(\vargen{\lambda_2}{2},\vargen{\lambda_3}{3})}}]_{1} & %\\[1mm] 
        \bc_2^{\vargen{\lambda_2}{1}} \cdot [{ \bc_1^{(\vargen{\lambda_2}{2},\vargen{\lambda_3}{3})}}]_{2}
        \end{array}\right]^\top
        =\bc_3 .
        \end{equation*}
    \end{itemize}
    By considering the first 2-D Loewner matrix $\IL_2^{\vargen{\lambda_1}{1}}$ leading, to the null space $\bc_2^{\vargen{\lambda_1}{1}}$, the very same process as the one presented in the previous subsection (2-D case) may be performed (to avoid the 2-D matrix construction). In what follows we describe this iteration (for $\bc_2^{\vargen{\lambda_1}{1}}$ only, as it similarly apply to $\bc_2^{\vargen{\lambda_2}{1}}$).
    \begin{itemize}
        \item First, one constructs the 1-D Loewner matrix along the second variable $\var{2}$ for frozen first and third variables, i.e., elements of $\tableau{3}(1,:,3)$, leading to
        \begin{equation*}
            \IL_1^{(\vargen{\lambda_1}{1},\vargen{\lambda_3}{3})}= \left[\begin{array}{cc} \frac{71}{520} & \frac{71}{540}\\[1mm] \frac{355}{2496} & \frac{355}{2592} \end{array}\right]
            ~~\textrm{and}~~
            \bc_1^{(\vargen{\lambda_1}{1},\vargen{\lambda_3}{3})}= \left[\begin{array}{c} -\frac{26}{27}\\[1mm] 1 \end{array}\right].
        \end{equation*}
        \item Second, as $\vargen{\lambda_{k_2}}{2}$ is of dimension two ($k_2=2$), two 1-D Loewner matrices appear: one for frozen $\vargen{\lambda_1}{2}$ and one for frozen $\vargen{\lambda_2}{2}$, along $\var{3}$ (here again, $ \var{1}$ is frozen to $\vargen{\lambda_1}{1}$). The first and second 1-D Loewner matrices lead to the following null spaces,
\begin{equation*}
   \bc_1^{(\vargen{\lambda_1}{1},\vargen{\lambda_1}{2})}= 
   \left[\begin{array}{ccc} \frac{7}{13}& -\frac{3}{2}& 1 \end{array}\right]^\top~~
\mbox{and}~~
\bc_1^{(\vargen{\lambda_1}{1},\vargen{\lambda_2}{2})}= \left[\begin{array}{ccc} 
\frac{5}{9}& -\frac{41}{27}& 1 \end{array}\right]^\top,
        \end{equation*}
        which can now be scaled by the coefficients of $\bc_1^{(\vargen{\lambda_1}{1},\vargen{\lambda_3}{3})}$, leading to
        \begin{equation*}
        \left[\begin{array}{c} 
        \bc_1^{(\vargen{\lambda_1}{1},\vargen{\lambda_1}{2})} \cdot [\bc_1^{(\vargen{\lambda_1}{1},\vargen{\lambda_3}{3})}]_{1}\\[1mm]
        \bc_1^{(\vargen{\lambda_1}{1},\vargen{\lambda_2}{2})}\cdot [\bc_1^{(\vargen{\lambda_1}{1},\vargen{\lambda_3}{3})}]_{2}
        \end{array}\right] 
        =\bc_2^{\vargen{\lambda_1}{1}}.
        \end{equation*}
        By scaling $\bc_2^{\vargen{\lambda_1}{1}}$ with the first element of $\bc_1^{(\vargen{\lambda_2}{2},\vargen{\lambda_3}{3})}$ then leads to $\bc_{3,1}^\top$.
    \end{itemize}
    
    This step is repeated for $\IL_2^{\vargen{\lambda_2}{1}}$ leading, to the null space $\bc_2^{\vargen{\lambda_2}{1}}$. The later is scaled with the second element of $\bc_1^{(\vargen{\lambda_2}{2},\vargen{\lambda_3}{3})}$,  leading to $\bc_{3,2}^\top$. By checking the complexity, one observes that only a collection of 1-D Loewner matrices needs to be constructed, as well as their null spaces. Here,  (i) one 1-D Loewner matrix along $\var{1}$ of dimension $2\times 2$ and (ii) two 2-D Loewner matrices along $\var{2}$ and $\var{3}$, recast as, two 1-D Loewner matrices along $\var{2}$ of dimension $2\times 2$, and four 1-D Loewner matrices along $\var{3}$ of dimension $3\times 3$. The resulting complexity is $(1\times 2^3)+(2\times 2^3)+(4\times 3^3)=132$ \flop, 
    being much less than 1,728 \flop \ for $\IL_3$. One may also notice that changing the variables orders as $\var{1}\leftarrow \var{3}$ and $\var{3}\leftarrow \var{1}$ would lead to $(1\times 3^3)+(3\times 2^3)+(6 \times 2^3)=99$. In both cases, the multi-variate Loewner matrices are no longer needed and can be replaced by a series of single variables, taming the curse of dimensionality.
\end{example}

\subsection{Null space computation in the $\ord$-D case and variable decoupling}

We now state the second main result of this paper: \Cref{thm:cod} and \Cref{thm:decoupling}, allowing us to address the \textbf{C-o-D} related to the null space computation of the $\ord$-D Loewner matrix. This is achieved by splitting a $\ord$-D Loewner matrix null space into a 1-D and a collection of $(\ord-1)$-D null spaces, thus another sequence of 1-D and $(\ord-2)$-D null spaces, and so on \dots 

\begin{theorem}\label{thm:cod}
Given the tableau $\tableau{\ord}$ as in \Cref{tab:nD} being the evaluation of the $\ord$-variable $\frH$ function \cref{eq:H} at the data set \cref{eq:data_n}, the null space of the corresponding $\ord$-D Loewner matrix $\IL_\ord$, is spanned by
\begin{equation*}
    %\cN(\IL_\ord)=
    \bvec \left[
    \bc_{\ord-1}^{\lani{1}{1}}\cdot\left[{ \bc_1^{(\lani{2}{k_2},\lani{3}{k_3},\cdots,\lani{\ord}{k_\ord})}}\right]_1,
    \cdots,
    \bc_{\ord-1}^{\lani{1}{k_1}}\cdot\left[{ \bc_1^{(\lani{2}{k_2},\lani{3}{k_3},\cdots,\lani{\ord}{k_\ord})}}\right]_{k_1}
    \right],
\end{equation*}
where $ \bc_1^{(\lani{2}{k_2},\lani{3}{k_3},\cdots,\lani{\ord}{k_\ord})}$ spans
$\cN(\IL_1^{(\lani{2}{k_2},\lani{3}{k_3},\cdots,\lani{\ord}{k_\ord})})$, i.e. the nullspace 
of the 1-D Loewner matrix for frozen $\{\var{2},\var{3},\cdots,\var{\ord}\}=\{\lani{2}{k_2},\lani{3}{k_3},\cdots,\lani{\ord}{k_\ord}\}$, and $\bc_{\ord-1}^{\lani{1}{j}}$ spans 
$\cN(\IL_{\ord-1}^{\lani{1}{j}})$, i.e. the $j$-th null space of the $(\ord-1)$-D Loewner matrix for frozen $\vargen{s_j}{1}=\{\lani{1}{1},\cdots,\lani{1}{k_1}\}$.
\end{theorem}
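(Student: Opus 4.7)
The plan is to mimic the argument used for Theorems \ref{thm:cod2} and \ref{thm:cod3}, working entirely at the level of the denominator polynomial of the barycentric rational form rather than manipulating the $\ord$-D Loewner matrix directly. Let $\alpha_{j_1,j_2,\ldots,j_\ord}$ denote the barycentric weights of the denominator $\bd(\var{1},\cdots,\var{\ord})$; by the construction recalled in \Cref{sec:LL}, the tensor of these weights vectorized row-wise spans $\cN(\IL_\ord)$. I will show that one can reconstruct this tensor, up to an irrelevant overall scalar, from (i) a single 1-D null space and (ii) $k_1$ many $(\ord-1)$-D null spaces.

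First, freeze $\var{l}=\lani{l}{k_l}$ for $l=2,\ldots,\ord$ in $\bd$. Only the factors $1/(\var{1}-\lani{1}{j_1})$ survive, multiplied by common products of the frozen Lagrange monomials. Thus $\bd(\var{1},\lani{2}{k_2},\cdots,\lani{\ord}{k_\ord})$ is a univariate polynomial whose Lagrange-basis coefficients are proportional to $\alpha_{j_1,k_2,\ldots,k_\ord}$, $j_1=1,\ldots,k_1$. By \Cref{def:Loe1} and the 1-D theory recalled in \Cref{res:real1}, these coefficients span $\cN(\IL_1^{(\lani{2}{k_2},\cdots,\lani{\ord}{k_\ord})})$, which is exactly $\bc_1^{(\lani{2}{k_2},\cdots,\lani{\ord}{k_\ord})}$.

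Next, for each fixed $j\in\{1,\ldots,k_1\}$, freeze only $\var{1}=\lani{1}{j}$ in $\bd$. The resulting $(\ord-1)$-variable polynomial $\bd(\lani{1}{j},\var{2},\cdots,\var{\ord})$ has barycentric weights proportional to $\alpha_{j,j_2,\ldots,j_\ord}$, $j_l=1,\ldots,k_l$. By definition, these span $\cN(\IL_{\ord-1}^{\lani{1}{j}})$, and hence equal $\theta_j\,\bc_{\ord-1}^{\lani{1}{j}}$ for some unknown nonzero scalar $\theta_j$. The task reduces to determining $\theta_j$, for $j=1,\ldots,k_1$. This is the only subtle point of the proof: each null space is defined only up to scaling, and the $k_1$ scalings must be mutually consistent.

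The consistency is imposed by the ``corner'' coefficient $\alpha_{j,k_2,\ldots,k_\ord}$, which appears in two places: it is the $j$-th entry of $\bc_1^{(\lani{2}{k_2},\cdots,\lani{\ord}{k_\ord})}$ (up to an overall scalar), and it is the last entry of $\theta_j\,\bc_{\ord-1}^{\lani{1}{j}}$. Normalizing $\bc_{\ord-1}^{\lani{1}{j}}$ so that its last entry equals unity and then multiplying the whole vector by $\bigl[\bc_1^{(\lani{2}{k_2},\cdots,\lani{\ord}{k_\ord})}\bigr]_j$ pins down $\theta_j$ and, simultaneously, all entries of the $j$-th slice of the coefficient tensor. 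Stacking these scaled slices for $j=1,\ldots,k_1$ and vectorizing row-wise yields precisely the expression stated in the theorem. The main obstacle I anticipate is exactly this normalization bookkeeping: one must verify that the two separate ways of reading the corner coefficients are compatible (i.e.\ that the denominator polynomial is uniquely defined by its univariate and its $(\ord-1)$-variate restrictions at the chosen frozen points), which follows from the uniqueness up to scaling of the barycentric representation together with the choice of a single common normalization at the corner multi-index $(k_1,k_2,\ldots,k_\ord)$. Finally, applying the same theorem recursively to each $(\ord-1)$-D null space reduces the construction to a cascade of 1-D Loewner null spaces, as announced and illustrated by \Cref{ex1_3D_cod}.
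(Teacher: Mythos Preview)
Your proposal is correct and follows essentially the same route as the paper, which simply refers back to the 2-D argument: work with the denominator polynomial in the Lagrange basis, freeze all but one variable to obtain the 1-D scaling vector, freeze only $\var{1}$ at each $\lani{1}{j}$ to obtain the $(\ord-1)$-D slices, and use the shared ``corner'' coefficients $\alpha_{j,k_2,\ldots,k_\ord}$ to reconcile the unknown column-wise scalings. Your write-up is in fact more explicit than the paper's own proof (which is a one-line pointer to the 2-D case), and your identification of the normalization bookkeeping as the only delicate step matches exactly the mechanism displayed in the 2-D proof and in \Cref{ex1_3D_cod}.
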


\begin{proof}
    The proof follows the one given for the 2-D and 3-D cases.
\end{proof}

\Cref{thm:cod} provides a means to compute the null space of an $\ord$-D Loewner matrix via a 1-D and $k_1$, $(\ord-1)$-D Loewner matrices. Evidently, the latter $(\ord-1)$-D Loewner matrix null spaces may also be obtained by $k_1$, 1-D Loewner matrices plus $k_1k_2$, $(\ord-2)$-D Loewner matrices. This reveals a recursive scheme that splits the $\ord$-D Loewner matrix into a set of 1-D Loewner matrices. As a consequence, the following decoupling theorem holds.
\begin{theorem} \label{thm:decoupling}
Given data \cref{eq:data_n} and \Cref{thm:cod}, the latter achieves decoupling of the variables, and the null space can be equivalently written as:
    \begin{equation} \label{eq:decoupling}
        \bc_\ord = \underbrace{\bc^{\var{\ord}}}_{\mathbf{Bary}({\var{\ord})}} \odot 
         \underbrace{(\bc^{\var{\ord-1}} \otimes {\bone}_{k_\ord} )}_{\mathbf{Bary}({\var{\ord-1}})} \odot
        \underbrace{(\bc^{\var{\ord-2}} \otimes {\bone}_{k_\ord k_{\ord-1}} )}_{\mathbf{Bary}({\var{\ord-2}})} \odot 
        \cdots \odot 
        \underbrace{(\bc^{\var{1}} \otimes {\bone}_{k_{\ord}\dots k_2} )}_{\mathbf{Bary}({\var{1}})}.
    \end{equation}
where $\bc^{\var{l}}$ denotes the vector of barycentric coefficients related to the $l$-th variable.
\end{theorem}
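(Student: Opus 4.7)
The plan is to prove Theorem \ref{thm:decoupling} by induction on the number of variables $\ord$, using Theorem \ref{thm:cod} as the inductive engine. The base case $\ord=1$ is immediate, since the product in \eqref{eq:decoupling} collapses to the single factor $\bc^{\var{1}}$, which by \Cref{def:Loe1} is precisely $\bc_1$.

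For the inductive step, I would first apply Theorem \ref{thm:cod} to write
\begin{equation*}
\bc_\ord \;=\; \bvec\!\left[\bc_{\ord-1}^{\lani{1}{1}}\cdot [\bc_1^{(\cdot)}]_1,\ \ldots,\ \bc_{\ord-1}^{\lani{1}{k_1}}\cdot [\bc_1^{(\cdot)}]_{k_1}\right],
\end{equation*}
where $\bc_1^{(\cdot)} := \bc_1^{(\lani{2}{k_2},\ldots,\lani{\ord}{k_\ord})}$ is the 1-D null space along $\var{1}$; this vector plays the role of $\bc^{\var{1}}$ in the target formula. Each block $\bc_{\ord-1}^{\lani{1}{j}}$ is an $(\ord-1)$-D Loewner null space on variables $\var{2},\ldots,\var{\ord}$ (with $\var{1}$ frozen at $\lani{1}{j}$), of length $k_2 k_3 \cdots k_\ord$, to which the inductive hypothesis applies, yielding
\begin{equation*}
\bc_{\ord-1}^{\lani{1}{j}} \;=\; \bc^{\var{\ord},j} \odot (\bc^{\var{\ord-1},j}\otimes \bone_{k_\ord}) \odot \cdots \odot (\bc^{\var{2},j}\otimes \bone_{k_\ord\cdots k_3}),
\end{equation*}
where the superscript $j$ records the dependence on the frozen value $\var{1}=\lani{1}{j}$.

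The final step is a simple algebraic reshuffle: I would invoke the elementary identity $\bvec\,[v_j u_j]_{j=1}^{k_1} = (\bu \otimes \bone_L) \odot \bvec\,[v_j]_{j=1}^{k_1}$, valid whenever each block $v_j$ has the same length $L$. Applied with $\bu=\bc^{\var{1}}$ and $L = k_2\cdots k_\ord$, this produces exactly the rightmost factor $(\bc^{\var{1}} \otimes \bone_{k_\ord\cdots k_2})$ of \eqref{eq:decoupling}. For the remaining factors, defining $\bc^{\var{l}}$ as the vertical concatenation $[\bc^{\var{l},1};\bc^{\var{l},2};\ldots;\bc^{\var{l},k_1}]$ of length $k_1\cdots k_l$, I would use the block-Kronecker commutation identity
\begin{equation*}
[\bc^{\var{l},1}\otimes \bone_{k_{l+1}\cdots k_\ord};\ \ldots;\ \bc^{\var{l},k_1}\otimes \bone_{k_{l+1}\cdots k_\ord}] \;=\; \bc^{\var{l}} \otimes \bone_{k_{l+1}\cdots k_\ord},
\end{equation*}
together with the fact that Hadamard product distributes over such vertical concatenations, to convert the Hadamard-over-$j$ of inductive-hypothesis factorizations into the single $\odot$-product appearing in \eqref{eq:decoupling}.

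The hard part will not be any deep algebra, but rather the careful bookkeeping of multi-level indexing: each $\bc^{\var{l}}$ in the final formula is implicitly the stack of $k_1 k_2 \cdots k_{l-1}$ distinct 1-D null spaces, one for every combination of frozen indices of $\var{1},\ldots,\var{l-1}$, and one must verify that the $\bone$-padding sizes at each recursive level line up consistently with this convention. Once this indexing bookkeeping is pinned down — e.g. by fixing a row-major listing of the multi-indices $(i_1,\ldots,i_\ord)$ compatible with the Kronecker structure of \Cref{def:Loen} — the remaining steps are purely mechanical applications of the two identities above.
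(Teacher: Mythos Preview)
Your proposal is correct and follows essentially the same approach the paper takes: the paper does not give a formal proof of \Cref{thm:decoupling} at all, but simply asserts it as the unwound form of the recursion in \Cref{thm:cod}, clarifying via the sentence after the theorem (that $\bc^{\var{1}}$ is a single 1-D null space, $\bc^{\var{2}}$ the stack of $k_1$ such null spaces, etc.) and via the worked $3$-D example in \Cref{sec:hilb}. Your inductive argument is exactly the rigorous version of that unwinding, and the two Kronecker/Hadamard identities you isolate are precisely the bookkeeping the paper leaves implicit.
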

As an illustration, in \Cref{thm:decoupling}, $\bc^{\var{1}}=\bc_1^{(\lani{2}{k_2},\lani{3}{k_3},\cdots,\lani{\ord}{k_\ord})}$ while $\bc^{\var{2}}$ is the vectorized collection of $k_1$ vectors $\bc_1^{(\lani{1}{1},\lani{3}{k_3},\cdots,\lani{\ord}{k_\ord})}, \cdots, \bc_1^{(\lani{1}{k_1},\lani{3}{k_3},\cdots,\lani{\ord}{k_\ord})} $ and so on. In \Cref{sec:hilb} and \cref{eq:kst_decoupling}, an illustrative numerical example is given. Next, we assess how much this contributes to taming the \textbf{C-o-D}, both in terms of \flop \ and memory savings.

\subsection{Summary of complexity, memory requirements, and accuracy}

Let us now state the main complexity result, related to \Cref{thm:cod}. The latter is stated in \Cref{thm:complexity} and  \Cref{thm:memory} being the two major justifications for taming the \textbf{C-o-D}. They state the drastic reduction of the computational complexity and the required memory.

\begin{theorem}\label{thm:complexity}
    The  \flop \ count for the recursive approach \Cref{thm:cod}, is:
    \begin{equation}
    %\left\{
    %\begin{array}{rcl}
        \text{\flop}_{1} = \displaystyle\sum_{j=1}^\ord \left( k_j^3 \prod_{l=1}^{j} k_{l-1}\right)\text{ where } k_0= 1,
    \end{equation}
\end{theorem}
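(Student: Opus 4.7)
The plan is to prove the formula by induction on the number $n$ of variables, leveraging the recursive reduction supplied by Theorem~\ref{thm:cod}. That result decomposes the nullspace computation of $\mathbb{L}_n$ with parameter dimensions $(k_1,\ldots,k_n)$ into (i) one 1-D Loewner nullspace of size $k_1\times k_1$, which I would charge $k_1^3$ flops via a dense SVD, and (ii) $k_1$ independent $(n-1)$-D Loewner nullspaces with dimensions $(k_2,\ldots,k_n)$. Writing $T_n(k_1,\ldots,k_n)$ for the total flop count, this immediately yields the recurrence
\[
T_n(k_1,\ldots,k_n) \;=\; k_1^3 \;+\; k_1 \cdot T_{n-1}(k_2,\ldots,k_n), \qquad T_1(k_1) \;=\; k_1^3.
\]
The base case $n=1$ already agrees with the claimed formula under the convention $k_0=1$, since the sum collapses to the single term $k_1^3\cdot k_0 = k_1^3$.

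For the induction step I would apply the hypothesis to the subproblem on the last $n-1$ variables. After relabeling so that the convention ``$k_0=1$'' is attached to the leading factor of the subproblem's sum, I would obtain
\[
T_{n-1}(k_2,\ldots,k_n) \;=\; \sum_{j=2}^{n} k_j^3 \prod_{l=2}^{j-1} k_l,
\]
with the empty product convention $\prod_{l=2}^{1}k_l = 1$ covering the $j=2$ term. Multiplying by $k_1$ absorbs the extra factor into the running product and gives $\sum_{j=2}^{n} k_j^3 \prod_{l=1}^{j-1} k_l$; adding the $j=1$ term $k_1^3$ produces $\sum_{j=1}^{n} k_j^3 \prod_{l=1}^{j-1} k_l$. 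The final bookkeeping step is to match this with the claimed form: since $\prod_{l=1}^{j}k_{l-1} = k_0\cdot\prod_{l=1}^{j-1} k_l = \prod_{l=1}^{j-1} k_l$ (using $k_0=1$), the two expressions agree term by term.

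The main obstacle is purely bookkeeping around this index shift; once the recurrence is written down, the algebra is mechanical, and as sanity checks the formula correctly reproduces the $51$ flops of Example~\ref{ex1_2D_cod} (with $k_1=3$, $k_2=2$) and the $132$ flops of Example~\ref{ex1_3D_cod} (with $k_1=k_2=2$, $k_3=3$). A secondary caveat worth recording is the cost model: I am treating each nullspace as computed by a dense SVD, so each $k\times k$ step costs $\mathcal{O}(k^3)$. If the row and column interpolation counts differ (so that the local 1-D Loewner block is rectangular $q_j\times k_j$), the cubic scaling and hence the overall structure of the sum persist, and the induction goes through with the appropriate constants.
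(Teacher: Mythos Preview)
Your proposal is correct and takes essentially the same approach as the paper: both exploit the recursive decomposition of \Cref{thm:cod} into one $k_1\times k_1$ nullspace plus $k_1$ subproblems of one fewer variable, and both arrive at the sum $k_1^3 + k_1 k_2^3 + \cdots + (k_1\cdots k_{\ord-1})k_\ord^3$. The only cosmetic difference is that the paper unrolls the recursion directly in a table (number of 1-D Loewner matrices at each level times their individual cost), whereas you phrase the same count as a formal induction via the recurrence $T_n = k_1^3 + k_1\,T_{n-1}$; the algebra and the result are identical.
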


\begin{proof}
    Consider a function in $\ord$ variables $\var{l}$, of degree $d_l>1$, $l=1,\cdots,\ord$ (and let $k_l=d_l+1$). \Cref{tab:complexity} shows the complexity as a function of the no. of variables.
    \begin{table}[ht!]
        \begin{center}
        \begin{tabular}{|c|c|c|c|}\hline
        $\#$ of variables of $\frH$ & $\#\IL_{1}$ matrix & Size of each $\IL_{1}$  & \flop \ per $\IL_{1}$ \\
        \hline
        $\ord$ & $k_1\,k_2\,\cdots \,k_{\ord-2}\,k_{\ord-1}$ & $k_\ord$ & $k_\ord^3$\\[1mm]
        $\ord-1$ &$k_1\,k_2\,\cdots \,k_{\ord-2}$ & $k_{\ord-1}$ & $k_{\ord-1}^3$\\[.5mm]
        \vdots&\vdots&\vdots&\vdots\\[.5mm]
        3 & $k_1\,k_2$ & $k_{3}$ & $k_{3}^3$\\[1mm]
        2 & $k_1$ & $k_{2}$ & $k_{2}^3$\\[1mm]
        1 & $1$ & $k_{1}$ & $k_{1}^3$\\ \hline
        \end{tabular}
        \end{center}
            \vspace{-2mm}
        \caption{Depiction of complexity as a function of the number of variables.}
        \label{tab:complexity}
    \end{table}

    Hence, the total number of \flop \ required to compute an element of the null space of the $\ord$-D Loewner matrix $\IL_{\ord}$ is:
    \begin{equation*}
        \begin{array}{rcl}
        \flop_{1}
        &=&k_1^3+\left(k_1\right)k_2^3+\cdots+\left(k_1k_2\cdots k_{\ord-2}\right)\,k_{\ord-1}^3+ \left(k_1k_2\cdots k_{\ord-2}k_{\ord-1}\right)\,k_\ord^3\\[1mm]
        &=&k_1^3+k_1\,\left(k_2^3+k_2\,\left(k_3^3+\cdots k_{\ord-2} \left(k_{\ord-1}^3+k_{\ord-1\,}\left(k_\ord^3\right)\,\right)\,\right)\,\right).
    \end{array}    
    \end{equation*}
    %This concludes the proof.
\end{proof}

\begin{corr}
The variable arrangement that minimizes the \flop \ cost is the one obtained by re-ordering each variable $\var{l}$ in decreasing complexity order $d_l$, i.e. $d_{l} \geq d_{l+1}$, for $l=1,\cdots,\ord-1$.
\end{corr}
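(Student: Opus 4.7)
The plan is to prove the corollary by an adjacent-transposition (bubble-sort) argument: I show that whenever two neighbouring variables are arranged in \emph{increasing} order, i.e.\ $k_j < k_{j+1}$, swapping them strictly decreases $\text{\flop}_1$ as given in \Cref{thm:complexity}. Iterating this observation then forces the unique minimizer to be the decreasing arrangement $k_1 \geq k_2 \geq \cdots \geq k_\ord$, equivalently $d_1 \geq d_2 \geq \cdots \geq d_\ord$.

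Fix an index $j \in \{1,\ldots,\ord-1\}$ and let $P := k_1 k_2 \cdots k_{j-1}$ (with $P = 1$ if $j = 1$). In the expansion
\begin{equation*}
\text{\flop}_1 = k_1^3 + k_1 k_2^3 + k_1 k_2 k_3^3 + \cdots + k_1 k_2 \cdots k_{\ord-1} k_\ord^3,
\end{equation*}
the terms at positions $< j$ do not involve $k_j$ or $k_{j+1}$; the two terms at positions $j$ and $j+1$ contribute exactly $P\,k_j^3 + P\,k_j k_{j+1}^3$; and every term at a position $> j+1$ carries the prefactor $P\,k_j\,k_{j+1}$, which is symmetric under $k_j \leftrightarrow k_{j+1}$. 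Consequently, swapping $k_j$ and $k_{j+1}$ alters \emph{only} the two middle terms, and the resulting change (before minus after) equals
\begin{equation*}
\Delta = P\bigl[(k_j^3 + k_j k_{j+1}^3) - (k_{j+1}^3 + k_{j+1} k_j^3)\bigr] = P\,(k_j - k_{j+1})\,G(k_j, k_{j+1}),
\end{equation*}
where $G(a,b) := a^2 + ab + b^2 - ab(a+b) = b^2 - a(b-1)(a+b)$ is symmetric in $(a,b)$.

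It therefore remains to verify $G(a,b) < 0$ for all admissible values $a,b \geq 2$ (since $k_l = d_l + 1 \geq 2$). Using $a \geq 2$ in the factored form, $a(b-1)(a+b) \geq 2(b-1)(b+2) = 2b^2 + 2b - 4$, which exceeds $b^2$ iff $b^2 + 2b > 4$---true for every integer $b \geq 2$. Hence $\Delta$ has the sign of $-(k_j - k_{j+1})$: a swap strictly decreases the cost whenever $k_j < k_{j+1}$, and strictly increases it whenever $k_j > k_{j+1}$. A standard bubble-sort argument then yields that the decreasing ordering is the unique minimizer. The only step I expect to require some care is the elementary inequality $G(a,b) < 0$---in particular, confirming strictness at the boundary $k_l = 2$---but this reduces to the one-line verification just sketched once $G$ is put in the factored form $b^2 - a(b-1)(a+b)$.
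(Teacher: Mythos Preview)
The paper states this corollary without proof, so there is nothing to compare against; your adjacent-transposition argument supplies what the paper omits, and it is correct. The key computation $\Delta = P(k_j-k_{j+1})G(k_j,k_{j+1})$ with $G(a,b)=a^2+ab+b^2-ab(a+b)$ is right, and your verification that $G(a,b)<0$ for $a,b\geq 2$ is sound (indeed $G(2,2)=-4$ at the boundary). Two minor remarks: first, your appeal to $k_l\geq 2$ is the natural hypothesis (a variable with $d_l=0$ does not appear and should be discarded; note the paper's own proof of \Cref{thm:complexity} even assumes $d_l>1$); second, the word ``unique'' is slightly too strong when some $k_l$ coincide---in that case several permutations realize the same decreasing sequence and all attain the minimum, consistent with the non-strict inequality $d_l\geq d_{l+1}$ in the statement.
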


\begin{corr}
    The most computationally demanding configuration occurs when each $\var{l}$ order satisfies $d_l=k_l-1=k-1$ ($l=1,\ldots,\ord$), requiring $k$ interpolation points each. The worst case \flop \ writes (note that $N=k^\ord$)
    \begin{equation}\label{eq:flop_worst}
    \overline{\text{\flop}_{1}}=k^3+k^4+\cdots+k^{n+2} 
    = k^3\dfrac{1-k^{n}}{1-k}=k^3\dfrac{1-N}{1-k},
    \end{equation}
\end{corr}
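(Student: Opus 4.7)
The plan is to simply specialize the general \flop{} formula of \Cref{thm:complexity} to the uniform setting $k_1=k_2=\cdots=k_{\ord}=k$ (so that each variable has complexity $d_l=k-1$), and then recognize the resulting sum as a geometric series.

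First, I would insert $k_l=k$ (with the convention $k_0=1$) into the product $\prod_{l=1}^{j} k_{l-1}$ appearing in the formula of \Cref{thm:complexity}. This product telescopes to $k_0 \cdot k_1 \cdots k_{j-1} = 1\cdot k^{j-1} = k^{j-1}$. Consequently, the generic term $k_j^3 \prod_{l=1}^{j} k_{l-1}$ becomes $k^3 \cdot k^{j-1} = k^{j+2}$, so that
\begin{equation*}
\overline{\text{\flop}_{1}} \;=\; \summ_{j=1}^{\ord} k^{j+2} \;=\; k^3 + k^4 + \cdots + k^{\ord+2}.
\end{equation*}

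Next, I would recognize this as a finite geometric progression with first term $k^3$, common ratio $k$, and $\ord$ terms, whose closed-form sum is $k^3 (1-k^{\ord})/(1-k)$. Finally, to obtain the last equality in the statement, I invoke the fact that in this uniform configuration the size of the data grid is $N = k_1 k_2 \cdots k_{\ord} = k^{\ord}$, so replacing $k^{\ord}$ by $N$ yields $k^3(1-N)/(1-k)$, as claimed.

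There is essentially no technical obstacle: the result is a direct specialization of \Cref{thm:complexity} followed by summing a geometric series. The only point requiring mild care is the bookkeeping of the convention $k_0 = 1$, which ensures that the $j=1$ term of the sum is indeed $k_1^3 = k^3$ (rather than being shifted by an extra factor of $k$), and the identification $N=k^{\ord}$ for the uniform configuration under consideration.
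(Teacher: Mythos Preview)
Your proposal is correct and matches the paper's approach: the paper treats this corollary as an immediate specialization of \Cref{thm:complexity} with $k_l=k$ for all $l$, followed by recognizing the resulting sum as a finite geometric series with ratio $k$ and identifying $N=k^{\ord}$. There is nothing more to add.
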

Note that \cref{eq:flop_worst} is a ($n$ finite) geometric series of ratio $k$. Consequently, an upper bound of \cref{eq:flop_worst} can be estimated by considering that $k>1$ and for a different number of variables $\ord$. As an example, for $\ord=\{1,2,3,4,\cdots\}$, the complexity is upper bounded by $\{{\cal O}(N^{3}),{\cal O}(N^{2.30}),{\cal O}(N^{1.94}),{\cal O}(N^{1.73}),\cdots\}$ respectively. One can clearly observe that when the number of variables $\ord>1$, the flop complexity drops to $2.30$, and this decreases as $\ord$ increases. For example, when $\ord=50$, one obtains ${\cal O}(N^{1.06})$.

In \Cref{fig:complexity}, we show the result in \Cref{thm:complexity} (cascaded $\ord$-D Loewner) and compare it to the reference full $\IL_\ord$ null space computation via \texttt{SVD}\footnote{One should note that we are considering here the case $K=Q=N$ to simplify the exposition.}, of complexity ${\cal O}(N^3)$ and with ${\cal O}(N^2)$ and ${\cal O}(N\log(N))$ references. In the same figure, we evaluate the worst case \cref{eq:flop_worst} for different numbers of considered variables $\ord=\{1,2,\cdots,50\}$ (each is evaluated with complexity $k=1,\cdots,50$). Then, we evaluate an upper complexity approximate of the form ${\cal O}(N^x)$, where $x>0$, to be an upper bound of the data set.

With similar importance, the data storage is a key element in the \textbf{C-o-D}. In complex and double precision, the construction of the $\ord$-D Loewner matrix $\IL_\ord \in \IC^{N\times N}$, where $N=k_1k_2\cdots k_n$, requires a disk storage of $\frac{8}{2^{20}} N^2$ MB. The following theorem states the result in the 1-D case.
\begin{theorem} \label{thm:memory}
Following the procedure in \Cref{thm:cod}, one only needs to sequentially construct single 1-D Loewner matrices, each of dimension $\IL_1 \in \IC^{k_l \times  k_l}$.  The largest stored matrix is $\IL_1 \in \IC^{k_{\text{max}}\times k_{\text{max}}}$, where $k_{\text{max}}=\max_{l} k_l$ ($l=1,\cdots,\ord$). In complex and double precision, the maximum disk storage is $\frac{8}{2^{20}} k_{\text{max}}^2$ \texttt{MB}.
\end{theorem}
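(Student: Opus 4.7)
The plan is to read off the memory bound directly from the recursive decomposition established in \Cref{thm:cod}. First, I would unroll the recursion until the base case: \Cref{thm:cod} reduces the null space of $\IL_\ord$ to one 1-D problem (freezing $\ord-1$ variables) together with $k_1$ problems involving $(\ord-1)$-D Loewner matrices, each of which is in turn reduced in the same manner. Iterating this decomposition exhaustively, as summarized already in \Cref{tab:complexity} underlying the proof of \Cref{thm:complexity}, yields a schedule in which every null space computation acts on a 1-D Loewner matrix $\IL_1$ associated with a single variable $\var{l}$ while all other variables are frozen at previously chosen support points. Hence no multivariate Loewner matrix is ever assembled in memory.

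Second, I would bound the dimension of each intermediate 1-D Loewner matrix. By \Cref{def:Loe1}, the 1-D matrix attached to the variable $\var{l}$ has size $k_l \times k_l$ (or $q_l\times k_l$ under the symmetric assumption $q_l=k_l$ used throughout \Cref{sec:cod}), independently of where in the recursion tree the call is located. Consequently every matrix that is instantiated during the procedure has size at most $k_{\max}\times k_{\max}$ with $k_{\max}=\max_l k_l$.

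Third, I would argue that these 1-D Loewner matrices can be processed sequentially: once the null vector $\bc_1^{(\cdot)}$ of a particular $\IL_1$ has been extracted, the matrix itself is no longer needed for the assembly step in \cref{eq:decoupling}, and can be deallocated before the next 1-D matrix is built. The only persistent objects carried across recursion levels are the null vectors themselves, each of length at most $k_{\max}$, which are strictly smaller than a single $k_{\max}\times k_{\max}$ matrix and thus do not affect the leading storage term. Therefore, at any moment during the procedure, at most one matrix of size $k_{\max}\times k_{\max}$ resides in memory.

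Finally, the memory conversion is routine: with $8$ bytes per stored double-precision entry and $2^{20}$ bytes per \texttt{MB}, the peak storage equals $\tfrac{8}{2^{20}}\,k_{\max}^2$ \texttt{MB}, establishing the claim. No step of this argument presents a genuine obstacle; the main point is simply to make explicit that the recursion of \Cref{thm:cod} admits a streaming implementation in which multivariate Loewner matrices are never formed, so the curse of dimensionality on storage collapses from $\prod_l k_l^2$ to $k_{\max}^2$.
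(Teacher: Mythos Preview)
Your proposal is correct and in fact more detailed than what the paper offers: the paper states \Cref{thm:memory} without any explicit proof, treating it as an immediate consequence of the recursive decomposition in \Cref{thm:cod} and following it only with a numerical illustration. Your argument makes explicit precisely the points the paper leaves implicit---that the recursion bottoms out in 1-D Loewner matrices of size $k_l\times k_l$, that these can be processed one at a time and discarded, and that the persistent null vectors are of lower order---so nothing is missing.
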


As an illustration, for a 6-variable problem with complexity $[19,5,3,5,7,1]$, one would require $[k_1, k_2, k_3, k_4, k_5, k_6] = [20, 6, 4, 6, 8, 2]$ points, then $N=46,080$. The $\ord$-D Loewner matrix requires $31.64$ \texttt{GB} of storage, while the 1-D version would require, in the worst case scenario, i.e., for $k_{\text{max}}=20$, only $6.25$ \texttt{KB} of storage.

\begin{remark}
    In addition to computational complexity and storage, this method improves the numerical accuracy. For instance, in the modest case of a function with complexity $[9,7,2]$, the rank of the 3D Loewner matrix in floating point is much bigger than one. More details about this example are to be found in \cite{bigpaper}. The method proposed in this work, therefore, makes the computation of the barycentric weights possible.
\end{remark}

From the above considerations, it follows that the proposed null space computation method leads to a drop in not only the computational complexity of the worst-case scenario but also the memory requirements.

As illustrated in \Cref{sec:exple}, this allows treating problems with a large number of variables, in a reasonable computational time and manageable complexity, which is the main reason for claiming that the \textbf{C-o-D has been "tamed"}.

\begin{figure}
    \centering
    \includegraphics[width=10.8cm]{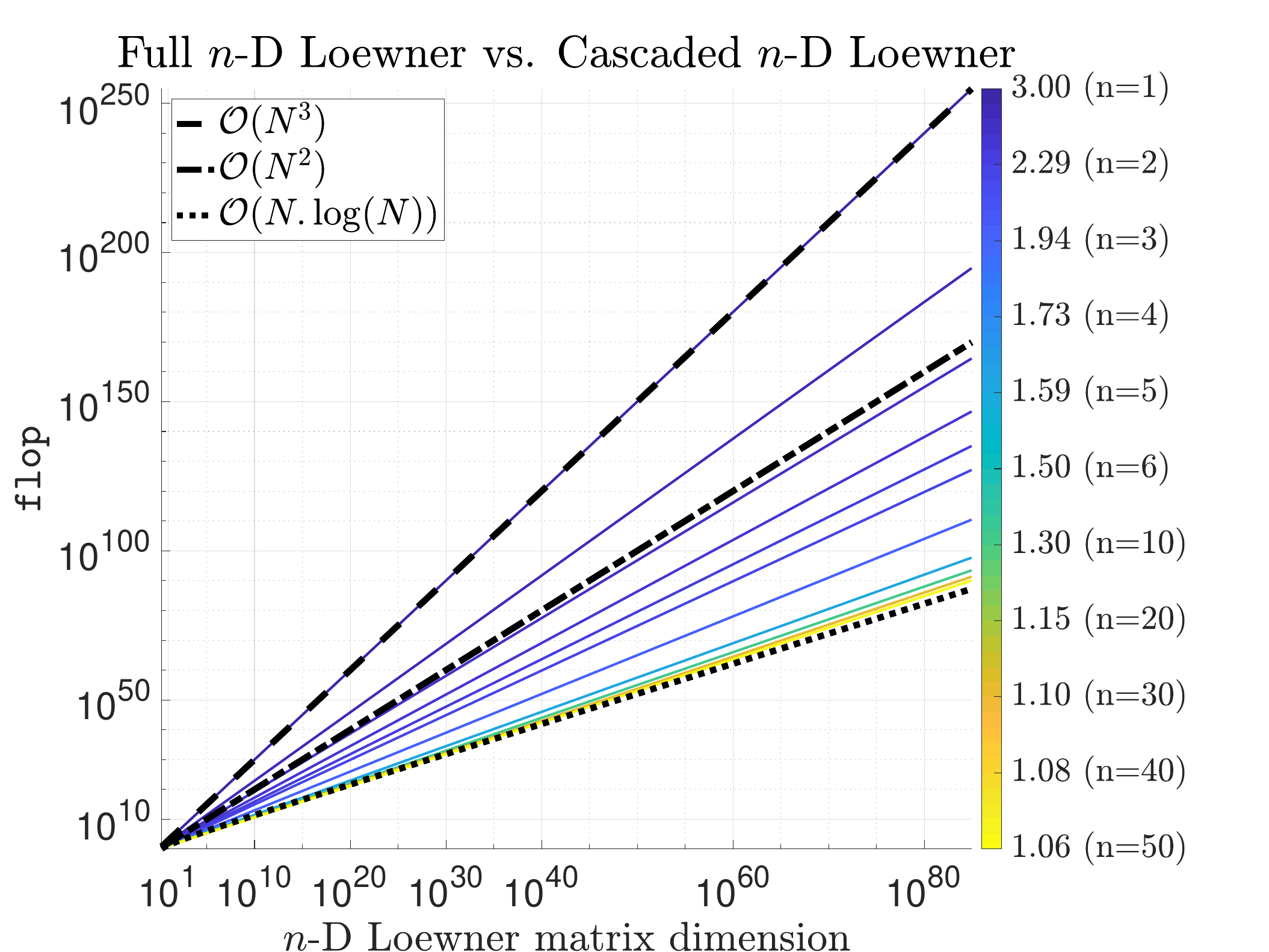}
    \caption{\flop \ comparison: cascaded $\ord$-D Loewner worst-case upper bounds for varying number of variables $\ord$, while the full $\ord$-D Loewner is ${\cal O}(N^3)$ (black dashed); comparison with ${\cal O}(N^2)$ and ${\cal O}(N \log(N))$ references are shown in dash-dotted and dotted black lines.}
    \label{fig:complexity}
\end{figure}

%%%%%%%%%%%%%%%%%%%%%%%%%%%%%%%%%
\section{Connection to the Kolmogorov superposition theorem}\label{sec:hilb}
Several researchers have contributed to sharpening Kolmogorov's original result, so currently it is often referred to as the \textit{Kolmogorov, Arnol'd, Kahane, Lorenz, and Sprecher} Theorem (see \cite{morris}, Theorem 2.1). For simplicity, we will follow \cite{morris} and state this result for $\ord=3$, in order to compare it with \Cref{thm:decoupling}.

\begin{theorem} Given a continuous function $f:\,[0,1]^3\rightarrow \mathbb{R}$ of three variables, there exist real numbers $\lambda_i$, $i=1,2$, and single-variable continuous functions $\phi_k:\,[0,1]\rightarrow\mathbb{R}$, $k=1,\cdots,7$, and a single-variable
function $g:\,\mathbb{R}\rightarrow\mathbb{R}$, such that\\[.1mm]
$\hspace*{5mm} f(x_1,x_2,x_3)=\sum_{k=1}^7\,g(\phi_k(x_1)+\lambda_1\phi_k(x_2)+\lambda_2\phi_k(x_3)),~
\forall (x_1,x_2,x_3)\in \left[0,1\right]^3$.\\[.6mm]
In the above result, $\lambda_i$ and $\phi_k$ do not depend on $f$. Thus, for $\ord=3$, eight functions are needed together with two real scalars $\lambda_i$.
\end{theorem}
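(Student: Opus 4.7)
The plan is to follow the classical route due to Kahane, with subsequent refinements by Lorentz and Sprecher. The proof splits cleanly into two stages: (i) a construction of the ``universal'' inner functions $\phi_k$ that depend on neither $f$ nor the scalars $\lambda_i$ in a substantive way, and (ii) an iterative construction of the single outer function $g$ driven by a geometric-error approximation lemma. The magic number $7 = 2n+1$ for $n=3$ arises naturally from a majority/pigeonhole argument in step (i).

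First I would fix two rationally independent irrationals $\lambda_1, \lambda_2$ (so that $\{1, \lambda_1, \lambda_2\}$ is linearly independent over $\mathbb{Q}$) and build seven strictly increasing Lipschitz inner functions $\phi_k : [0,1] \to [0,1]$. The $\phi_k$ are constructed from a countable nested family of dyadic-like partitions of $[0,1]$, shifted cyclically in $k$, so that the resulting maps $\Phi_k(x_1,x_2,x_3) := \phi_k(x_1) + \lambda_1 \phi_k(x_2) + \lambda_2 \phi_k(x_3)$ satisfy the key separation property: for every length scale $\varepsilon > 0$ there is a covering of $[0,1]^3$ by small boxes such that, for each box $B$, at least $n+1 = 4$ of the seven images $\Phi_k(B)$ are pairwise disjoint from the images of all other boxes at that scale. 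The rational independence of $1, \lambda_1, \lambda_2$ is what ensures that a sum $\phi_k(x_1)+\lambda_1\phi_k(x_2)+\lambda_2\phi_k(x_3)$ genuinely encodes three independent coordinates rather than collapsing.

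Second, I would prove the single-step approximation lemma: for every continuous $F$ on $[0,1]^3$ there exists a continuous $g_F : \mathbb{R} \to \mathbb{R}$ with $\|g_F\|_\infty \leq \tfrac{1}{2n+1}\|F\|_\infty$ such that $\bigl\| F - \sum_{k=1}^7 g_F \circ \Phi_k \bigr\|_\infty \leq \tfrac{2n}{2n+1}\|F\|_\infty$. On each ``separated'' image tube of a given $\Phi_k$, one prescribes $g_F$ to equal a constant that corrects the target value in the majority of indices; the minority indices produce only controllable error thanks to the uniform bound on $\|g_F\|_\infty$. Iterating this lemma with $F_0 = f$ and $F_{j+1} = F_j - \sum_k g_j \circ \Phi_k$ gives $\|F_j\|_\infty \leq (2n/(2n+1))^j \|f\|_\infty$, and the series $g := \sum_{j \geq 0} g_j$ converges uniformly by the Weierstrass $M$-test, producing a continuous outer function with the required identity.

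The principal obstacle, as in every proof of KST, lies entirely in stage (i): the inner functions must be chosen so that the majority-separation property holds simultaneously at all scales and at every pair of points of the cube, and this requires a delicate interlocking of the shifted partitions defining the $\phi_k$. It is precisely the need for a strict majority to vote ``separated'' at every scale that forces the count $2n+1 = 7$; any smaller family of inner functions would leave too many coincidences to absorb into a single outer correction. Once the inner functions are in hand, stage (ii) is essentially a fixed-point/contraction argument and presents no serious difficulty.
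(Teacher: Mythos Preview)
The paper does not prove this theorem at all: it is stated without proof as a known result from the literature (the Kolmogorov--Arnol'd--Kahane--Lorentz--Sprecher theorem, quoted from \cite{morris}, Theorem~2.1), and serves only as background to motivate the paper's own rational-function analogue in \cref{eq:star}. So there is no ``paper's proof'' to compare against.

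Your sketch is the standard Kahane-style argument and is correct in outline: rationally independent weights $1,\lambda_1,\lambda_2$, seven shifted increasing inner functions built from nested cyclic partitions so that a strict majority (four of seven) of the maps $\Phi_k$ separate any small cube at every scale, followed by the single-step contraction lemma with ratio $2n/(2n+1)=6/7$ and a geometric iteration summing to a continuous outer $g$. The emphasis you place on stage~(i) as the genuine difficulty, and on the pigeonhole origin of $2n+1=7$, is accurate. One small caveat: the bound you state for $\|g_F\|_\infty$ is usually taken of order $\|F\|_\infty/(n+1)$ rather than $\|F\|_\infty/(2n+1)$ in the cleanest presentations, though either normalization can be made to work; the essential point is that the majority-vote correction plus the controlled minority error yield a contraction factor strictly less than~$1$.
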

The purpose of this section is to make contact with KST using a three-variable example.
\begin{example}
Consider the three-variable function   $\frH(s, t, x)=\frac{s^2+xs+1}{t+x+st+2}$. Since the degrees in each variable are $(2,1,1)$, we will need the integers
$k_1=3$, $k_2=2$, and $k_3=2$. This implies that $N=k_1k_2k_3=12$. The right and left interpolation points are chosen as $s_1=1$, $s_2=2$, $s_3=3$; $t_1=4$, $t_2=5$; $x_1=6$, $x_2=7$, and $s_4=3/2$, $s_5=5/2$, $s_6=7/2$; $t_3=9/5$, $t_4=11/5$; $x_3=13/3$, $x_4=5$, respectively. Following the theory developed above, the right triples of interpolation points are $\bS=[\bs_1,~\bs_2,~\bs_3]\otimes {\bone}_{2}\otimes{\bone}_{2}$, $\bT={\bone}_{3}\otimes[\bt_1,~\bt_2]\otimes{\bone}_{2}$, $\bX={\bone}_{3}\otimes{\bone}_{2}\otimes[\bx_1,~\bx_2]$ $\in{\IC}^{1\times N}$ (where ${\bs_i}=s-s_i$, ${\bt_i}=t-t_i$ and ${\bx_i}=x-x_i$). Thus, the resulting 3-D Loewner matrix has dimension $12\times 12$, with the $12$ barycentric weights given by (for emphasis, we denote
by \textbf{Bary} what was earlier denoted by $\bc$)
\vspace{3mm}
\begin{center}
$\textbf{Bary}=$
$\left[\frac{16}{29}~~ -\frac{17}{29}~~ -\frac{18}{29}~~ \frac{19}{29}~~ -\frac{40}{29}~~ \frac{42}{29}~~ \frac{46}{29}~~ -\frac{48}{29}~~ \frac{24}{29}~~ -\frac{25}{29}~~ -\frac{28}{29}~~ 1 \right]^\top$. \\[1mm]
\end{center}
\vspace{3mm}
As already shown, a decomposition of this vector follows, in a (point-wise) product of barycentric weights with respect to each variable, separately. Thus, \textbf{decoupling} the problem is achieved (which is one of the important aspects of KST), and the following is obtained:~ $\textbf{Bary}=\textbf{Bary}_x\odot{\textbf{ Bary}}_t\odot{\textbf{Bary}_s}$,~ where $\odot$ denotes the point-wise product. This is \cref{eq:decoupling} for $\ord=3$. This is the key result that allows the connection with KST and taming the curse of dimensionality. We have shown that the 3-D multivariate function can be computed in terms of three 1-D functions (one in each variable). These functions denoted below by $\mathbf{\Phi}(x)$, $\mathbf{\Psi}(t)$ and $\mathbf{\Omega}(s)$ are obtained from a collection of null space computations: 1 along $s$, 3 along $t$ and 6 along $x$. More specifically, following notations of \Cref{thm:decoupling}, 
\begin{equation}\label{eq:kst_decoupling}
\begin{array}{rlrl}
\bc^{x}&= \textrm{vec}\left(\begin{array}{cccccc} -\frac{16}{17} & -\frac{18}{19} & -\frac{20}{21} & -\frac{23}{24} & -\frac{24}{25} & -\frac{28}{29}\\[1mm] 1 & 1 & 1 & 1 & 1 & 1 \end{array}\right) &, \textbf{Bary}_x &= \bc^{x}\\[6mm]
\vspace{2mm}
\bc^{t}&= \textrm{vec}\left(\begin{array}{ccc} -\frac{17}{19} & -\frac{7}{8} & -\frac{25}{29}\\[1mm] 1 & 1 & 1 \end{array}\right) &, \textbf{Bary}_t &= \bc^{t}\otimes \bone_3\\[2mm]
\bc^{s}&= \textrm{vec}
\left(\begin{array}{ccc} \frac{19}{29} & -\frac{48}{29} & 1 \end{array}\right) &, \textbf{Bary}_s &= \bc^{s}\otimes \bone_{6}
\end{array}
\end{equation}
Furthermore, $\textbf{Lag}(x)$, $\textbf{Lag}(t)$ and $\textbf{Lag}(s)$ are the monomials of the \textbf{Lagrange bases components} in each variable. Finally, $\bW$ are the right interpolation values for the triples in ${\textbf S}\times{\textbf T}\times{\textbf X}$.  The ensuing numerical values are as follows:

\vspace{-3mm}

$$\hspace*{-3mm}
\underbrace{
\left[\begin{array}{c} -\frac{16}{17}\\[1mm] 1\\[1mm] -\frac{18}{19}\\[1mm] 1\\[1mm] -\frac{20}{21}\\[1mm] 1\\[1mm] -\frac{23}{24}\\[1mm] 1\\[1mm] -\frac{24}{25}\\[1mm] 1\\[1mm] -\frac{28}{29}\\[1mm] 1 \end{array}\right]}_
{\textbf{Bary}_x},
\underbrace{
\left[\begin{array}{c} -\frac{17}{19}\\[1mm] -\frac{17}{19}\\[1mm] 1\\[1mm] 1\\[1mm] -\frac{7}{8}\\[1mm] -\frac{7}{8}\\[1mm] 1\\[1mm] 1\\[1mm] -\frac{25}{29}\\[1mm] -\frac{25}{29}\\[1mm] 1\\[1mm] 1 \end{array}\right]}_
{\textbf{Bary}_t},
\underbrace{
\left[\begin{array}{c} \frac{19}{29}\\[1mm] \frac{19}{29}\\[1mm] \frac{19}{29}\\[1mm] \frac{19}{29}\\[1mm] -\frac{48}{29}\\[1mm] -\frac{48}{29}\\[1mm] -\frac{48}{29}\\[1mm] -\frac{48}{29}\\[1mm] 1\\[1mm] 1\\[1mm] 1\\[1mm] 1 \end{array}\right]}_{\textbf{Bary}_s},
\underbrace{
\left[\begin{array}{c} \frac{1}{x-6}\\[1mm] \frac{1}{x-7}\\[1mm] \frac{1}{x-6}\\[1mm] \frac{1}{x-7}\\[1mm] \frac{1}{x-6}\\[1mm] \frac{1}{x-7}\\[1mm] \frac{1}{x-6}\\[1mm] \frac{1}{x-7}\\[1mm] \frac{1}{x-6}\\[1mm] \frac{1}{x-7}\\[1mm] \frac{1}{x-6}\\[1mm] \frac{1}{x-7} \end{array}\right]}_{\textbf{Lag}(x)},
\underbrace{
\left[\begin{array}{c} \frac{1}{t-4}\\[1mm] \frac{1}{t-4}\\[1mm] \frac{1}{t-5}\\[1mm] \frac{1}{t-5}\\[1mm] \frac{1}{t-4}\\[1mm] \frac{1}{t-4}\\[1mm] \frac{1}{t-5}\\[1mm] \frac{1}{t-5}\\[1mm] \frac{1}{t-4}\\[1mm] \frac{1}{t-4}\\[1mm] \frac{1}{t-5}\\[1mm] \frac{1}{t-5} \end{array}\right]}_{\textbf{Lag}(t)},
\underbrace{
\left[\begin{array}{c} \frac{1}{s-1}\\[1mm] \frac{1}{s-1}\\[1mm] \frac{1}{s-1}\\[1mm] \frac{1}{s-1}\\[1mm] \frac{1}{s-2}\\[1mm] \frac{1}{s-2}\\[1mm] \frac{1}{s-2}\\[1mm] \frac{1}{s-2}\\[1mm] \frac{1}{s-3}\\[1mm] \frac{1}{s-3}\\[1mm] \frac{1}{s-3}\\[1mm] \frac{1}{s-3} \end{array}\right]}_{\textbf{Lag}(s)},
\underbrace{
\left[\begin{array}{c} \frac{1}{2}\\[1mm] \frac{9}{17}\\[1mm] \frac{4}{9}\\[1mm] \frac{9}{19}\\[1mm] \frac{17}{20}\\[1mm] \frac{19}{21}\\[1mm] \frac{17}{23}\\[1mm] \frac{19}{24}\\[1mm] \frac{7}{6}\\[1mm] \frac{31}{25}\\[1mm] 1\\[1mm] \frac{31}{29} \end{array}\right]}_{{\bW}}\!\!\begin{array}{c}
{\textrm def}\\
\!\!\Rightarrow
\end{array}\!\!
\left\{\!\!\begin{array}{l}
{\mathbf \Phi}(x)\!\!=  \!\!\!\textbf {Bary}_x\odot \textbf{Lag}(x)\\[2mm]
{\mathbf \Psi}(t)\!\!=  \!\!\!\textbf {Bary}_t\odot \textbf{Lag}(t)\\[2mm]
{\mathbf \Omega}(s)\!\!=\!\!\!\textbf {Bary}_s\odot \textbf{Lag}(s)\\[1mm]
\end{array}\!\!\right.
$$
With the above notation, we can express $\bH$ as the quotient of two rational functions:
$$
\left.\begin{array}{rcl}
\hat{\textbf n}(s,t,x)&=&  
\sum_{\textrm rows}
\left[{\bW}\odot{\mathbf \Phi}(x)\odot{\mathbf \Psi}(t)\odot{\mathbf \Omega}(s)\right]\\[1mm]
\hat{\textbf d}(s,t,x)&=&  \sum_{\textrm rows}
\left[{\mathbf \Phi}(x)\odot{\mathbf \Psi}(t)\odot{\mathbf \Omega}(s)\right]
\end{array}\right\}~\Rightarrow~
\frac{\hat{\textbf n}(s,t,x)}{\hat{\textbf d}(s,t,x)}={\bH}(s,t,x).
$$
Consequently, KST for rational functions, as \textbf{composition and superposition} of one-variable functions, takes the form:
\begin{equation}\label{eq:star}
\left.\begin{array}{rcl}
\hat{\textbf n}(s,t,x)&=&\sum_{\textrm rows}\,\exp \left[\,\log{\bW}+\log{\mathbf \Phi}(x)+\log{\mathbf \Psi}(t)+
\log{\mathbf \Omega}(s)\,\right]\\[1mm]
\hat{\textbf d}(s,t,x)&=&\sum_{\textrm rows}\,\exp \left[\,\log{\mathbf \Phi}(x)+\log{\mathbf \Psi}(t)+
\log{\mathbf \Omega}(s)\,\right].
\end{array}\right\}
\end{equation}

\end{example}

\noindent
\textbf{Similarities and differences between KST and the results in \cref{eq:decoupling} and \cref{eq:star}}

\vspace*{2mm}
\noindent
\textbf{a).} While KST refers to continuous functions defined on $[0,1]^\ord$, \cref{eq:star} is concerned with rational functions defined on $\mathbb{C}^\ord$.\\[1mm]
\textbf {b).} Expressions in \cref{eq:star} are valid in a particular basis, namely the \textit{Lagrange} basis. Multiplication of functions in \cref{eq:star},  is defined with respect to this basis.\\[1mm]
\textbf {c).} The composition and superposition properties hold for the numerator and denominator. This is important in our case because \cref{eq:star} preserves interpolation conditions.\\[1mm]
\textbf {d).} The parameters needed are $\ord=3$ Lagrange bases (one in each variable) and the barycentric coefficients of the numerator and denominator. Note that in KST, no explicit denominators are considered.\\[1mm]
\textbf {e).}  Both KST and \cref{eq:star} accomplish the goal of replacing the computation of multivariate functions by means of a series of computations involving single-variable functions, KST for general continuous functions, and \cref{eq:star} for rational functions. Notice also that \cref{eq:star} provides a different formulation of the problem than KST.\\[1mm]
\textbf {f).} In addition to the Kolmogorov-Arnold neural nets (KANs) \cite{KAN}, our approach provides a new application of KST to the modeling of multi-parameter systems.

%%%%%%%%%%%%%%%%%%%%%%%%%%%%%%%%%
\section{Data-driven multivariate model construction}\label{sec:algo}
This section focuses on the numerical aspects of constructing the realization from data measurements.

\subsection{Two algorithms}

In what follows, we introduce two algorithms. The first one is a direct method extending the one proposed by the authors in \cite{IA2014}, while the second is an iterative one,  inspired by the p-AAA presented in \cite{CBG2023}. These procedures are outlined in Algorithm \ref{algo:LL_nD} and in Algorithm \ref{algo:LL_nD_adaptive}. 
For additional details, see also \cite{IA2014,CBG2023}.

\begin{algorithm}
\begin{algorithmic}[1]
\REQUIRE $\tableau{\ord}$ as in \Cref{tab:nD}
\STATE Check that interpolation points are disjoint. 
\STATE Compute $d_l=\max_k \rank \IL_1^{(k)}$, the order along variable $\var{l}$ ($k$ is referring to all possible \\ combinations for the frozen variables $\{\var{1},\cdots,\var{k-1},\var{k+1},\cdots,\var{\ord}\}$).
\STATE Construct \cref{eq:data_n}, a sub-selection $P_c^{(\ord)}$ where $(k_1,k_2\dots,k_\ord)=(d_1,d_2,\dots,d_\ord)+1$; and $P_r^{(\ord)}$ where $(q_1,q_2\dots,q_\ord)$ gather the rest of the data. 
\STATE Compute $\bc_\ord$, the $\ord$-D Loewner matrix null space  e.g. using \Cref{thm:cod}.
\STATE Construct $\IAlag$, $\IBlag$, $\bGamma$ and $\bDelta$ as in \Cref{res:realn} with any left/right separation.
\STATE Construct multivariate realization as in  \Cref{thm:realization}.
\ENSURE $\bH(\var{1},\dots,\var{\ord}) = \bC \bPhi(\var{1},\var{2},\dots,\var{\ord})^{-1}\bG$ interpolates $\frH(\var{1},\var{2},\dots,\var{\ord})$ along $P_c^{(\ord)}$.
\end{algorithmic}
\vspace{4mm}
\caption{Direct data-driven pROM construction}
 \label{algo:LL_nD} 
\end{algorithm}

\begin{algorithm}
\begin{algorithmic}[1]
\REQUIRE $\tableau{\ord}$ as in \Cref{tab:nD} and tolerance $\texttt{tol}>0$
\STATE Check that interpolation points are disjoint. 
\WHILE{$\texttt{error} > \texttt{tol}$} 
\STATE Search the point indexes with maximal $\texttt{error}$ (first iteration: pick any set).
\STATE Add points in $P_c^{(\ord)}$ and put the remaining ones in $P_r^{(\ord)}$, obtain \cref{eq:data_n}.
\STATE Compute $\bc_\ord$, the $\ord$-D Loewner matrix null space  e.g. using \Cref{thm:cod}.
\STATE Construct $\IAlag$, $\IBlag$, $\bGamma$ and $\bDelta$ as in \Cref{res:realn} with any left/right separation.
\STATE Construct multivariate realization as in  \Cref{thm:realization}.
\STATE Evaluate $\texttt{error} = \max ||\widehat{\tableau{\ord}}-\tableau{\ord}||$ where $\widehat{\tableau{\ord}}$ is the evaluation  of $ \bH(\var{1},\dots,\var{\ord})$ \\ along the support points.
\ENDWHILE
\ENSURE $\bH(\var{1},\dots,\var{\ord}) = \bC \bPhi(\var{1},\var{2},\dots,\var{\ord})^{-1}\bG$ interpolates $\frH(\var{1},\var{2},\dots,\var{\ord})$ along $P_c^{(\ord)}$.
\end{algorithmic}
\vspace{4mm}
\caption{Adaptive data-driven pROM construction} 
\label{algo:LL_nD_adaptive} 
\end{algorithm}

\subsection{Discussion} 

The main difference between the two algorithms is that Algorithm \ref{algo:LL_nD} is direct while Algorithm \ref{algo:LL_nD_adaptive} is iterative. Indeed, in the former case, the order is estimated at step 2, while the order is iteratively increased in the latter case until a given accuracy is reached. 

By analyzing Algorithm \ref{algo:LL_nD}, the process first needs to estimate the rational order along each variable $\var{l}$. Then, we construct the interpolation set \cref{eq:data_n} (here, one may shuffle data and interpolate different blocks). From this initial data set, the $\ord$-D Loewner matrix and its null space may be computed using either the full (\Cref{sec:LL}) or the 1-D recursive (\Cref{sec:cod}) approach. Based on the barycentric weights, the realization is constructed using \Cref{res:realn}.

%By now considering \Cref{algo:LL_nD_adaptive}, the process is almost similar. 
The difference between the two algorithms consists of the absence of the order detection process
in the second algorithm. Instead, it is replaced by an evaluation of the model along the data set at each step until a tolerance is reached. Then, at each iteration, one adds the support points set where the maximal error between the model and the data occurs. This idea is originally exploited in the univariate case of AAA in \cite{nakatsukasa2018} and its parametric version from \cite{CBG2023}; we similarly follow this greedy approach. 

\subsubsection{Dealing with real arithmetic}

All computational steps have been presented using complex data. 
%This is the most straightforward manner to present this generic method. 
However, in applications, it is often desirable to deal with real-valued functions in order to preserve the realness of the realization and to allow the time-domain simulations of the differential-algebraic equations. To do so, some assumptions and adaptations must be satisfied. Basically, interpolation points along each variable must be either real or chosen to be closed under conjugation. For more details on this procedure, we refer the reader to \cite[Section A.2]{IA2014}.

\subsubsection{Null space computation remarks}

To apply the proposed methods to a broad range of real-life applications, we want to comment on the major computational effort / hard point in the proposed process: the \textbf{null space computation}. Indeed, either in the full $\ord$-D and the recursive $1$-D case, a null space must be computed. Numerically, there exist multiple ways to compute it: \texttt{SVD} or \texttt{QR} decomposition, linear resolution, etc. Without going into details outside the scope of this paper, many tuning variables may be adjusted to improve accuracy. These elements are crucial to the success of the proposed solution. %Investigations are left (and not forgiven) to future works.
In the next section, all null spaces have been computed using the standard \texttt{SVD} routine of \texttt{MATLAB}. For more details, the reader may refer to \cite{guglielmi2015}.

%%%%%%%%%%%%%%%%%%%%%%%%%%%%%%%%%
\section{Numerical experiments} \label{sec:exple}

The effectiveness of the numerical procedures sketched in Algorithm \ref{algo:LL_nD} and in Algorithm \ref{algo:LL_nD_adaptive} is illustrated in this section, through examples involving multiple variables ranging from two to twenty.
%\footnote{We also provide numerical and open-access examples at \url{https://sites.google.com/site/charlespoussotvassal/nd_loew_tcod}.}. 
In what follows, the computations have been performed on an Apple MacBook Air with 512 \texttt{GB SSD} and 16 \texttt{GB RAM}, with an \texttt{M1} processor. The software used was \texttt{MATLAB} 2023b.

%%%%%%%%%%%%%%%%%%%%%%%%%%%%%%%%%%%%%%
\subsection{A simple synthetic parametric model (2-D)}

Let us start with the simple example used in \cite[Section 5.1]{IA2014} and \cite[Section 3.2.1]{CBG2023},  for which the target function is
$\frH(s,p) = \frac{1}{1+25(s+p)^2}+\frac{0.5}{1+25(s-0.5)^2}+\frac{0.1}{p+25}$.

We use the same sampling setting as in the references mentioned above. Along the $s$ variable, we choose 21 points linearly spaced in $[-1,1]$. For the direct method in Algorithm \ref{algo:LL_nD}, we alternatively sample the grid as  $\lan{1}=[-1,-0.8,\cdots,1]$ and $\mun{1}=[-0.9,-0.7,\cdots,0.9]$; then, along the $p$ variable, we choose $21$ linearly spaced points in $[0,1]$. For the direct approach in Algorithm \ref{algo:LL_nD}, we alternatively sample the grid as $\lan{2}=[0,0.1,\cdots,1]$ and $\mun{2}=[0.05,0.15,\cdots,0.95]$. 

First, we apply Algorithm \ref{algo:LL_nD} and obtain the single variables singular value decay reported in \Cref{fig:2simple_synthetic_sv} (left), suggesting approximation orders along $(s,p)$ of $(d_1,d_2)=(4,3)$, being precisely the one of the equation $\frH(s,p)$ above. Then, the $2$-D Loewner matrix is constructed and its associated singular values are reported in \Cref{fig:2simple_synthetic_sv} (right), leading to the full null space and barycentric weights (results follow next).

\begin{figure}[ht!]
    \centering
        \hspace{-4mm}
    \includegraphics[width=0.53\textwidth]{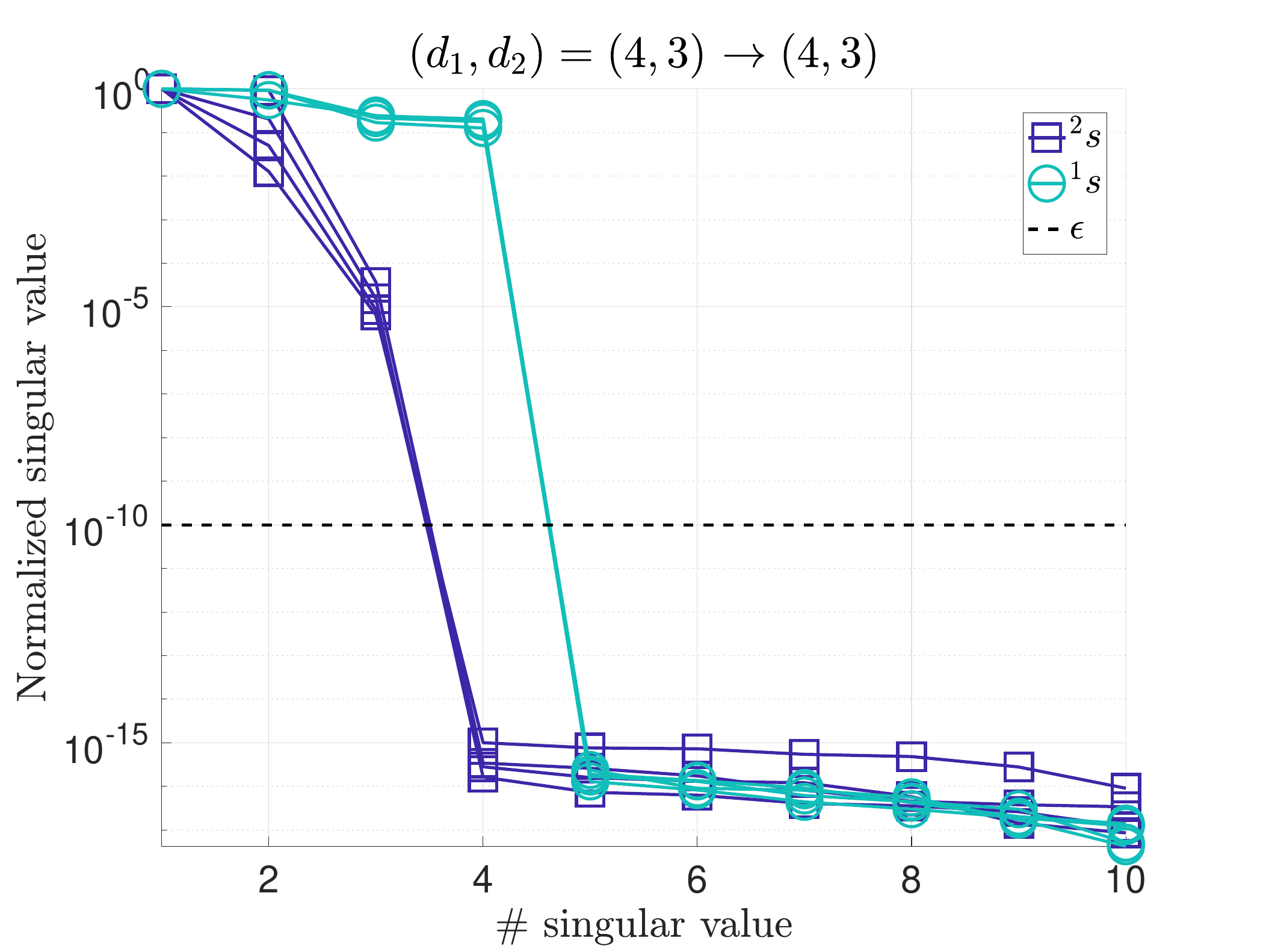}
    \hspace{-10mm}
    \includegraphics[width=0.53\textwidth]{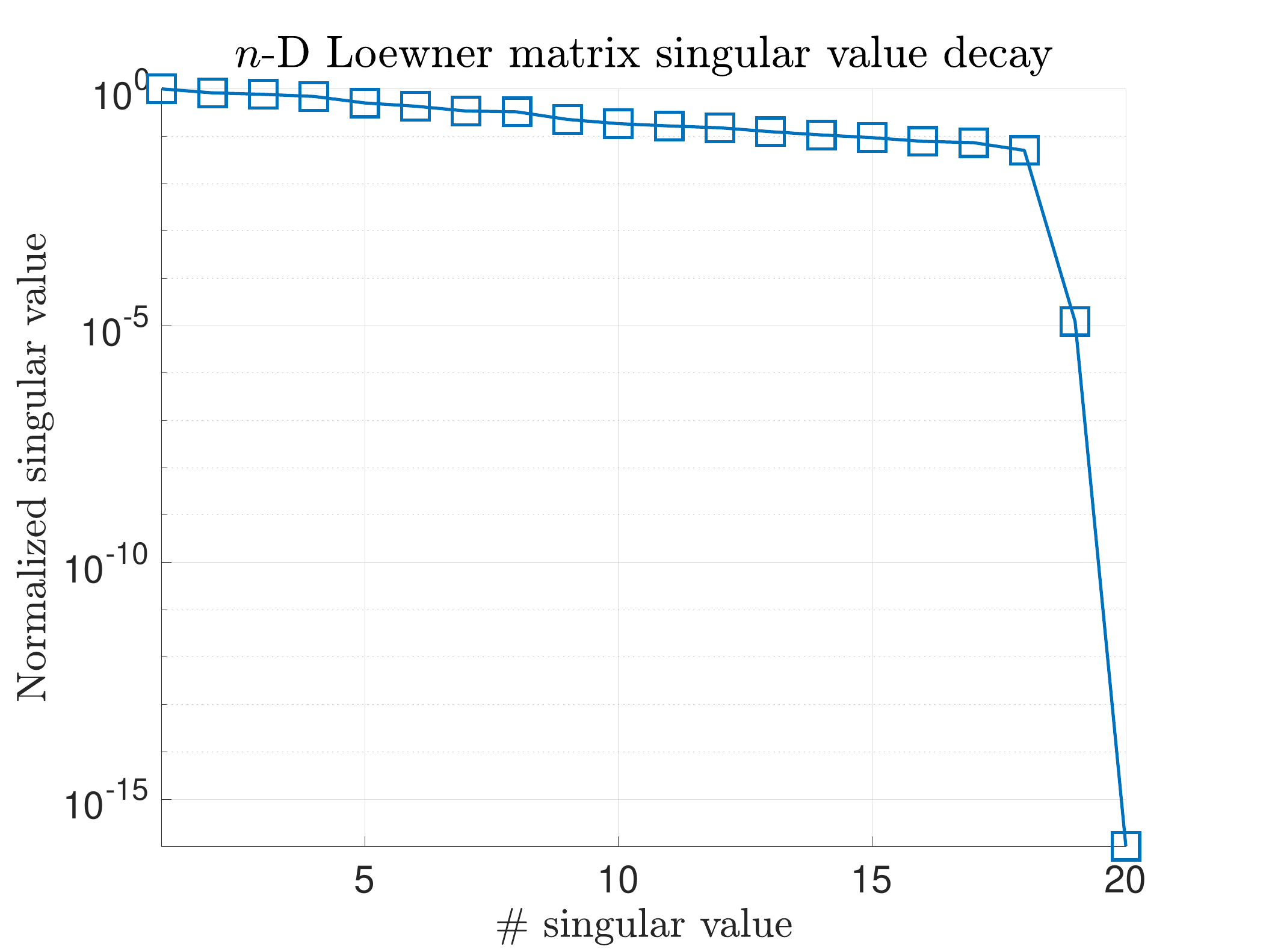}
    \caption{2-D simple synthetic model: Algorithm \ref{algo:LL_nD} normalized singular values of each 1-D (left) and the 2-D (right) Loewner matrices.}  \label{fig:2simple_synthetic_sv}
\end{figure}

Next, we investigate the behavior of Algorithm \ref{algo:LL_nD_adaptive}. In \Cref{tab:2D_synthetic_simple}, we report the iterations of this algorithm when computing the null space with either the full 2-D version (\Cref{tab:2D_synthetic_simple_1}) or the recursive 1-D one (\Cref{tab:2D_synthetic_simple_2}). In both cases, the same order is recovered, i.e., $(4,3)$. Even if the selected interpolation points are slightly different, the final error is below the chosen tolerance, which is \texttt{tol=$10^{-6}$}. By now comparing the \flop \ complexity, the benefit of the proposed recursive 1-D approach with respect to the 2-D one is clearly emphasized, even for such a simple setup. Indeed, while the latter is of $=1+2^3+6^3+12^3+20^3=9,953$ \flop, the former leads to $2+10+51+172+445=680$ \flop, being $14$ times smaller. The mismatch for the three configurations over all sampling points of $\tableau{2}$ data is close to machine precision for all configurations. 

\begin{table}[ht!]
    \begin{subtable}[b]{.5\linewidth}
    \centering
\small
    \subcaption{Algorithm \ref{algo:LL_nD_adaptive} (full $\IL_\ord$)}
    \small
    \vspace{1mm}
    \begin{tabular}{|c|c|c|c|c|}
    \hline
    \footnotesize
     Iter. & $\lan{1} $ & $\lan{2}$ & $(k_1,k_2)$ & \flop \\ \hline
     1 &  $0$& $0$ & $(1,1)$ & $1^3$ \\ \hline
     2 &  $-1$&  & $(2,1)$ & $2^3$ \\ \hline
     3 &  $-0.9$& $0.9$  & $(3,2)$ & $6^3$\\ \hline
     4 &  $-0.1$& $0.2$ & $(4,3)$ & $12^3$\\ \hline
     5 & $0.6$ &  $1$& $(5,4)$ & $20^3$\\ \hline
    \end{tabular}
    \label{tab:2D_synthetic_simple_1}
    \end{subtable}
    \begin{subtable}[b]{.4\linewidth}
    \centering
    \subcaption{Algorithm \ref{algo:LL_nD_adaptive} (recursive $\IL_1$)}
        \vspace{1mm}
    \small
    \begin{tabular}{|c|c|c|c|}
    \hline
     $\lan{1} $ & $\lan{2}$ & $(k_1,k_2)$ & \flop \\ \hline
     $0$ & $0$ & $(1,1)$ & 2 \\ \hline
     $-1$ &  & $(2,1)$ & 10 \\ \hline
     $0.1$ & $0.05$ & $(3,2)$ & 51 \\ \hline
     $-0.9$ & $0.75$ & $(4,3)$ & 172 \\ \hline
     $0.7$ & $0.15$ & $(5,4)$ & 445 \\ \hline
    \end{tabular}
    \label{tab:2D_synthetic_simple_2}
    \end{subtable}
    \vspace{-2mm}
\caption{2-D simple model iterations with different null space computation methods.}\label{tab:2D_synthetic_simple}
\end{table}
\normalsize

Finally, to conclude this first example, \Cref{fig:2simple_synthetic_fr} reports the responses (left) and mismatch (right) along $s$ for different values of $p$, for the original model and the ones obtained with Algorithm \ref{algo:LL_nD} and with Algorithm \ref{algo:LL_nD_adaptive} (with recursive 1-D null space). 

\begin{figure}[h!]
    \hspace{-6mm}
    \centering   \includegraphics[width=0.53\textwidth]{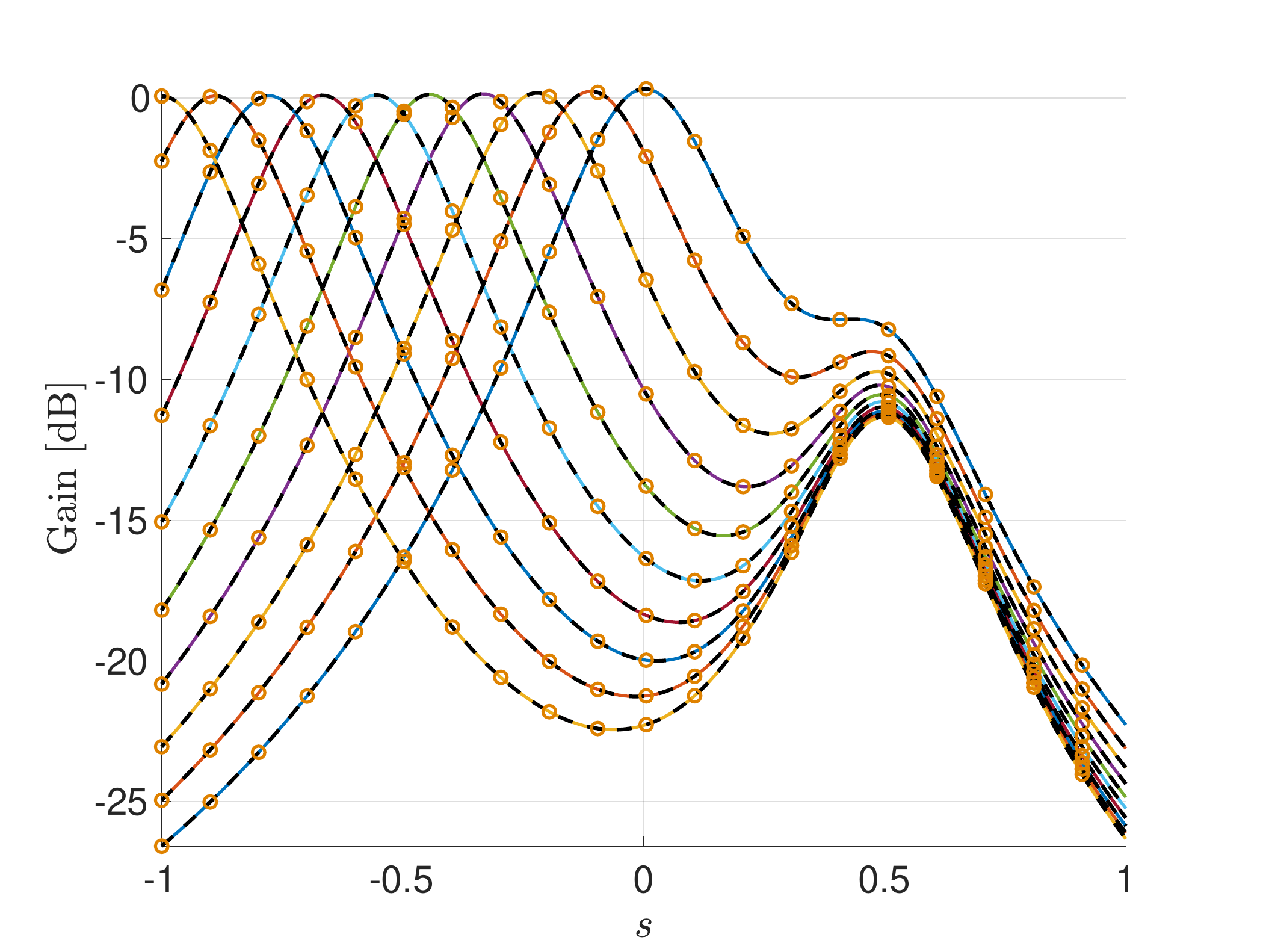}    
    \hspace{-8mm}
    \includegraphics[width=0.53\textwidth]{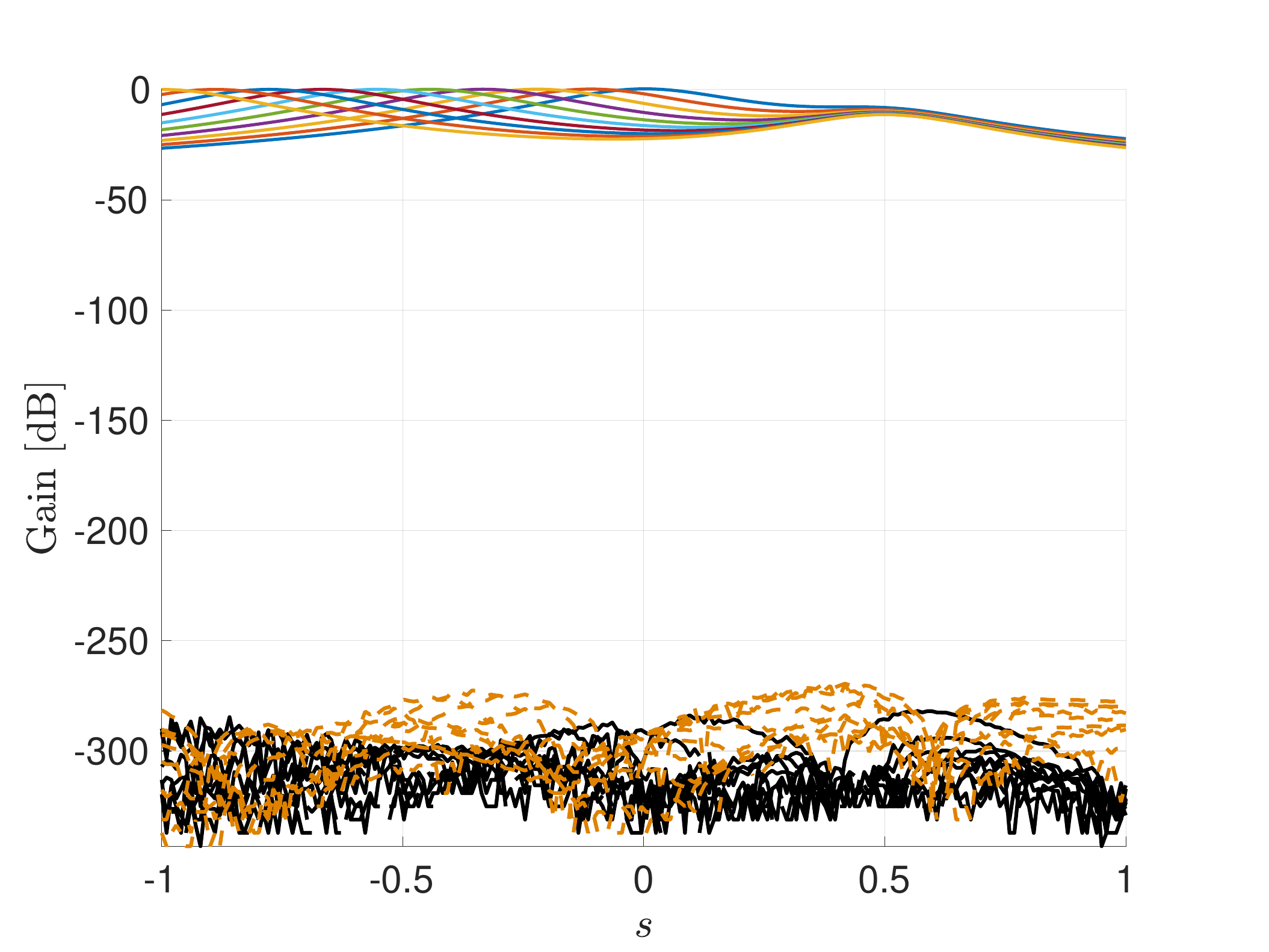}
    \caption{2-D simple model: frequency responses (left) and errors (right); original compared to Algorithm \ref{algo:LL_nD} (black lines), and to Algorithm \ref{algo:LL_nD_adaptive} (orange dots and dashed lines).}
    \label{fig:2simple_synthetic_fr}
\end{figure}

%%%%%%%%%%%%%%%%%%%%%%%%%%%%%%%%%%%%%%
\subsection{Flutter phenomena for flexible aircraft (3-D)}

This numerical example is extracted from industrial data and considers a mixed model/data configuration. It represents the flutter phenomena for flexible aircraft as detailed in \cite{RPVT2023}\footnote{We acknowledge P. Vuillemin for generating the (modified) data.}. 
This model can be described as $s^2M(m) x(s) + s B(m) x(s) + K(m) x(s) - G(s,v) = u(s)$, where $M(m), B(m), K(m)\in\IR^{n\times n}$ are the mass, damping, and stiffness matrices, all dependent on the aircraft mass $m\in \IR_+$ ($n\approx 100$). These matrices are constant for a given flight point (but vary for a mass configuration). Then, the generalized aeroelastic forces $G(s,v)\in\IC^{n\times n}$ describe the aeroelastic forces exciting the structural dynamics. This $G(s,v)$ is known only at a few sampled frequencies and some true airspeed, \emph{i.e.}, $G(\imath \omega_i,v_j)$ where $i=1,\cdots,150$ and $j=1,\cdots,10$. Note that these values are obtained through dedicated high-fidelity numerical solvers. The sampling setup is as follows. Along the $s$ variable, $\lan{1}$ are 150 logarithmically spaced points between $\imath [10,35]$ and $\mun{1}=-\lan{1}$; Along the $v$ variable, $\lan{2}$ are 5 linearly spaced points between $[4.77,5.21]\cdot 10^3$ and $\mun{2}$, 5 linearly spaced between $[4.82,5.27]\cdot 10^3$; Along the $m$ variable, $\lan{3}$ are 5 linearly spaced points between $[1.52,1.66]\cdot 10^3$ and $\mun{2}$, 5 linearly spaced between $[1.54,1.68]\cdot 10^3$.

Here, the data is a $3$-dimensional tensor $\tableau{3}\in \IC^{300\times 10\times 10}$. By applying Algorithm \ref{algo:LL_nD}, an approximation order $(14,1,1)$ is reasonable. The singular value decay of the 3-D Loewner matrix is reported in \Cref{fig:3flutter_fr} (left). Then, the original frequency response and that of the pROM are displayed in \Cref{fig:3flutter_fr} (right), showing an accurate matching attained.

\begin{figure}[ht!]
    \centering
        \hspace{-7mm}
    \includegraphics[width=0.545\textwidth]{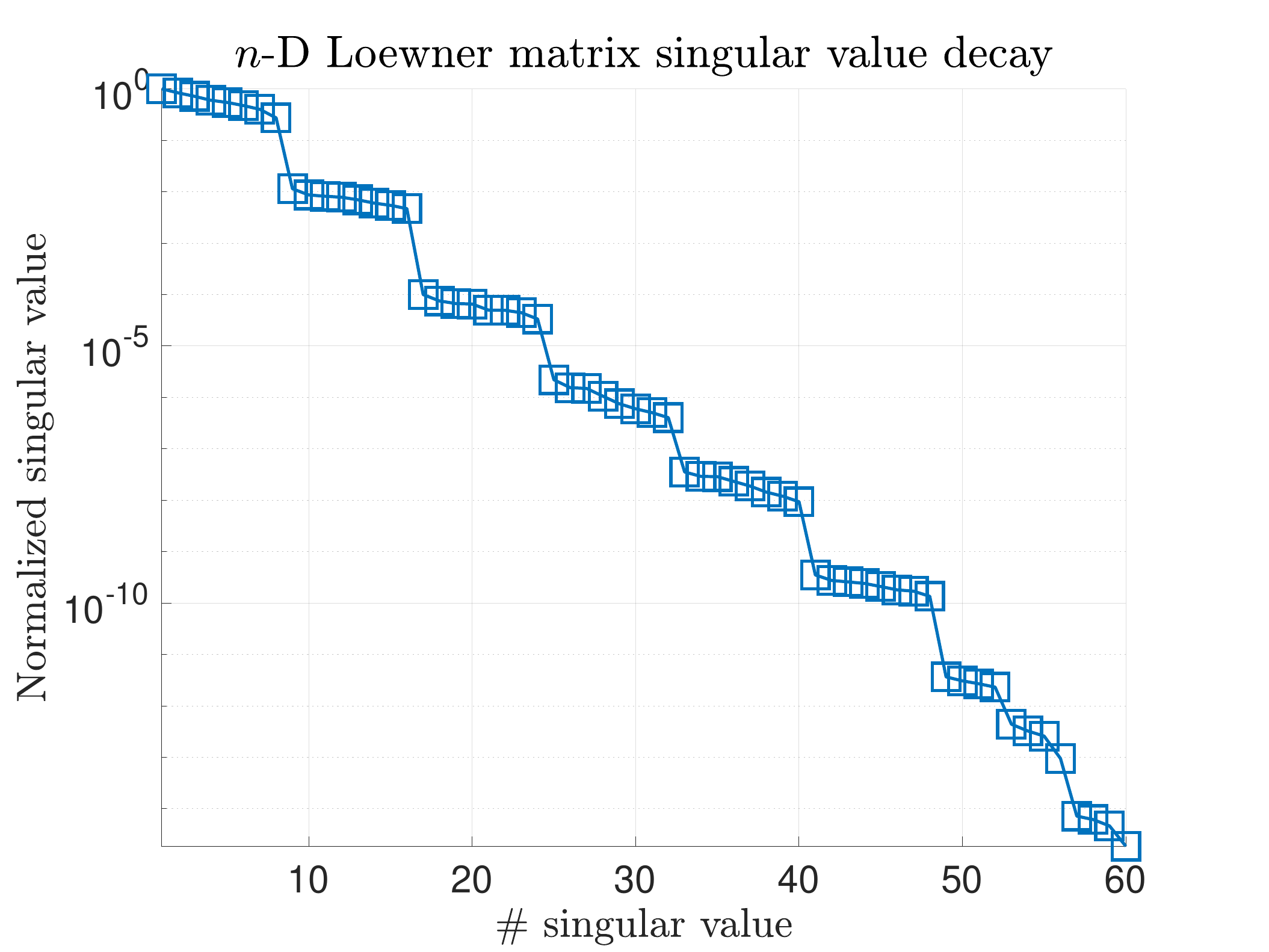}
    \hspace{-10mm}
    \includegraphics[width=0.545\textwidth]{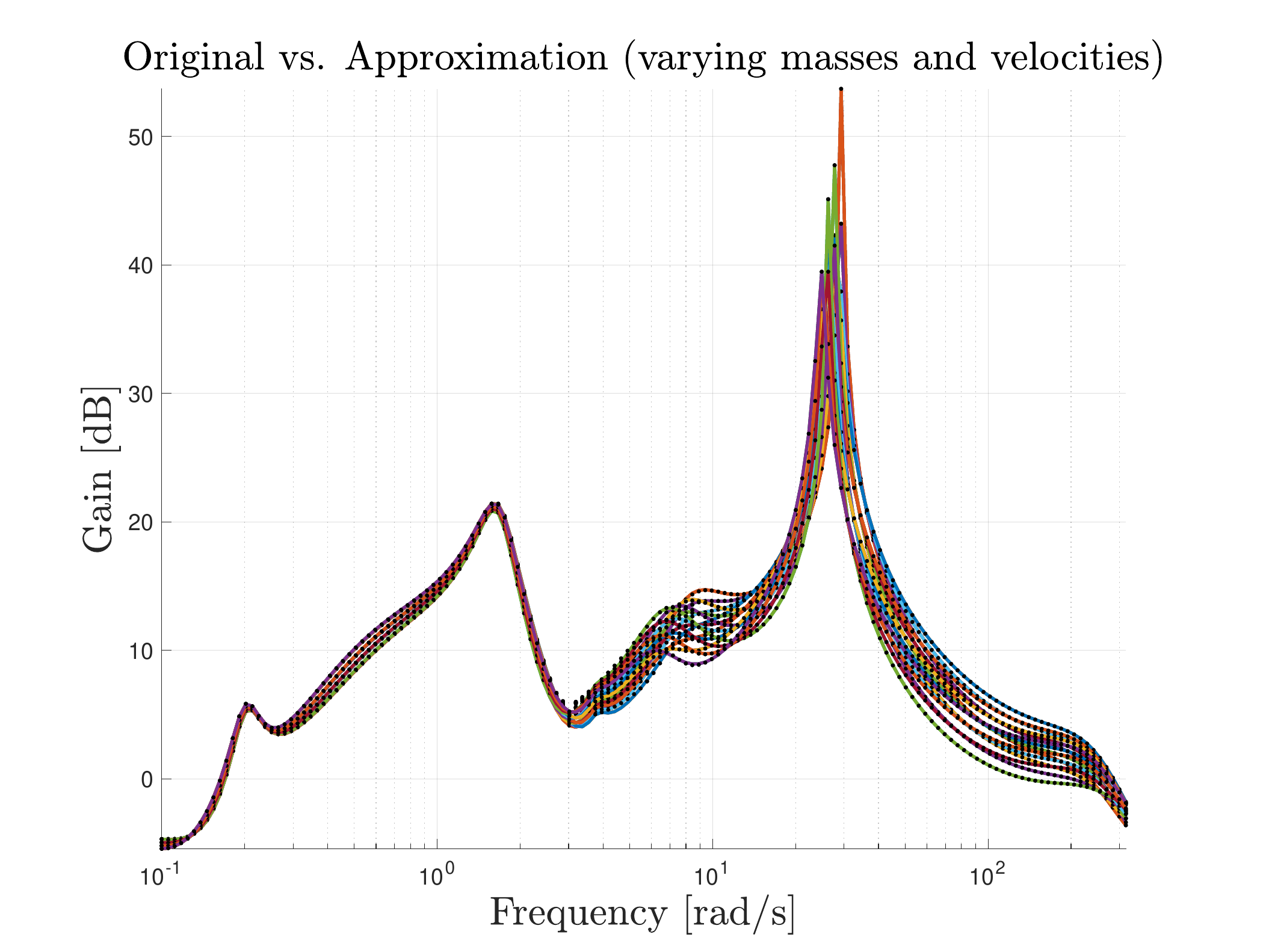}
    \vspace{-3mm}
    \caption{3-D flutter model: 3-D Loewner matrix singular values (left) and frequency responses (right). Original (solid colored) and pROM (black dotted).}
    \label{fig:3flutter_fr}
\end{figure}

One relevant point of the proposed Loewner framework, nicely illustrated in this application, is its ability to construct a realization of a pROM, based on a hybrid data set, mixing frequency-domain data and matrices. By connecting this problem to NEPs, parametric rational approximation allows estimating the eigenvalue trajectories; we refer to \cite{QVP2021,VQPV2023,RPVT2023} for details and industrial applications.

%%%%%%%%%%%%%%%%%%%%%%%%%%%%%%%%%%%%%%
\subsection{A multivariate function with a high number of variables (20-D)}

To conclude and to numerically demonstrate the scalability features of our process, let us consider the following 20-variable rational model $\frH(\var{1},\ldots,\var{20})=$
\begin{equation*}%\label{eq:20D} 
\dfrac{3\cdot\var{1}^3+4\cdot\var{8}+\var{12}+\var{13}\cdot\var{14}+\var{15}}{\var{1}^{10}+\var{2}^2\cdot\var{3}+\var{4}+\var{5}+\var{6}+\var{7}\cdot\var{8}+\var{9}\cdot\var{10}\cdot\var{11}+\var{13}+\var{13}^3\cdot\pi+\var{17}+\var{18}\cdot\var{19}-\var{20}},
\end{equation*}
\normalsize
with a complexity of $(10, 2, 1, 1, 1, 1, 1, 1, 1, 1, 1, 1, 1, 1, 1, 3, 1, 1, 1, 1)$. By applying Algorithm \ref{algo:LL_nD} with the recursive 1-D null space construction, the barycentric coefficients $\bc_\ord\in\IC^{17301504}$ are obtained with a computational complexity of 149,226,836 \flop, computed in 4 hours. As explained in the supplementary material, this vector allows reconstructing the original model with an absolute error $\approx 10^{-7}$, for a random parameter selection. Applying the full $\ord$-D Loewner version instead would have theoretically required the construction of a Loewner matrix of dimension $N=17,301,504$, with a null space computation costing about $5.18 \cdot 10^{21}$ \flop,  being prohibitive on a standard computer. Storing such a $N\times N$ $\ord$-D Loewner matrix would require  $4,356$ \texttt{TB} in double precision, while the 1-D approach needs $1.89$ \texttt{KB} only (in the worst case).

%%%%%%%%%%%%%%%%%%%%%%%%%%%%%%%%%
\section{Conclusions} \label{sec:conclusion}

We investigated the Loewner framework for linear multivariate/parametric systems and developed a complete methodology (and two algorithms) for data-driven $\ord$-variable pROM realization construction, in the ($\ord$-D) Loewner framework. We also showed the relationship between $\ord$-D Loewner and Sylvester equations. Then, as the numerical complexity and matrix storage explode with the number of data points and variables, we introduce a recursive 1-D null space procedure, equivalent to the full $\ord$-D one. This process allows the decoupling of the variables involved and thus provides the effect of drastically (i) reducing the computational complexity and (ii)  the matrix storage needs. This becomes a major step toward taming the curse of dimensionality. In addition, we have established a connection between the decoupling result and the Kolmogorov superposition theorem (KST). We apply these results to numerical examples throughout the paper, demonstrating their effectiveness. Lastly, we claim that the contributions presented are not limited to the system dynamics and rational approximation fields, but also may apply to many scientific computing areas, including tensor approximation and nonlinear eigenvalue problems, for which dimensionality remains an issue.

\section*{Supplementary material and software availability}
Additional material to supplement the findings reported in this paper is available at
\begin{center}
{\color{blue}\url{https://sites.google.com/site/charlespoussotvassal/nd_loew_tcod}}
\end{center}
and in the report \cite{bigpaper} (in which over 30 test cases are analyzed and various methods are compared in detail).
Furthermore, the \texttt{MATLAB} code used to generate the figures and illustrations corresponding to the numerical results presented in this work is available at

\begin{center}
{\color{blue}\url{https://github.com/cpoussot/mLF}}
\end{center}

%%%%%%%%%%%%%%%%%%%%%%%%%%%%%%%%%

\bibliographystyle{spmpsci}
\bibliography{agp}

%%%%%%%%%%%%%%%%%%%%%%%%%%%%%%%%%
%\newpage\small
%\section*{Appendix}\label{sec:appendix}
%\input{sec-appendix}

\end{document}